\newtheorem{theorem}{Theorem}[section]
\newtheorem{lemma}[theorem]{Lemma}
\newtheorem{proposition}[theorem]{Proposition}
\newtheorem{cor}[theorem]{Corollary}
\theoremstyle{definition}
\newtheorem{definition}[theorem]{Definition}
\newtheorem{example}[theorem]{Example}
\theoremstyle{remark}
\newtheorem{remark}[theorem]{Remark}
\theoremstyle{conjecture}
\newtheorem{conjecture}[theorem]{Conjecture}
\theoremstyle{problem}
\newtheorem{problem}[theorem]{Problem}
\numberwithin{equation}{section}
\newcommand{\C}{\mathbb{C}}
\newcommand{\K}{\mathbb{K}}
\newcommand{\N}{\mathbb{N}}
\newcommand{\Q}{\mathbb{Q}}
\newcommand{\R}{\mathbb{R}}
\newcommand{\T}{\mathbb{T}}
\newcommand{\Z}{\mathbb{Z}}
\newcommand{\cD}{\mathcal{D}}
\newcommand{\cO}{\mathcal{O}}
\newcommand{\cE}{\mathcal{E}}
\newcommand{\cF}{\mathcal{F}}
\newcommand{\cP}{\mathcal{P}}
\newcommand{\cR}{\mathcal{R}}
\newcommand{\cZ}{\mathcal{Z}}
\newcommand{\fP}{\mathfrak{P}}
\newcommand\Aut{\operatorname{Aut}}
\newcommand\Out{\operatorname{Out}}
\newcommand\Inn{\operatorname{Inn}}
\newcommand\id{\operatorname{id}}
\newcommand\Ad{\operatorname{Ad}}
\newcommand\Hom{\operatorname{Hom}}
\newcommand{\Ima}{\operatorname{Im}}
\newcommand\Ext{\operatorname{Ext}}
\newcommand{\ob}{\operatorname{ob}}
\newcommand{\tob}{\operatorname{\widetilde{ob}}}
\newcommand{\talpha}{\widetilde{\alpha}}
\newcommand{\tbeta}{\widetilde{\beta}}
\newcommand{\ev}{\mathrm{ev}}
\newcommand{\tu}{\widetilde{u}}
\newcommand{\tW}{\widetilde{W}}
\newcommand{\tZ}{\widetilde{Z}}
\newcommand{\pr}{\mathrm{pr}}
\newcommand{\tK}{\widetilde{K}}
\newcommand{\tDelta}{\tilde{\Delta}}
\newcommand{\hhalpha}{\hat{\hat{\alpha}}}
\newcommand{\bE}{\bar{E}}
\newcommand{\op}{\mathrm{op}}
\title{$G$-kernels of Kirchberg algebras}
\author{Masaki Izumi
\thanks{Supported in part by JSPS KAKENHI Grant Number JP20H01805}\\
Graduate School of Science \\
Kyoto University \\
Sakyo-ku, Kyoto 606-8502, Japan} 
\begin{document} 
\maketitle
\centerline{In memory of Eberhard Kirchberg} 
\begin{abstract} 
A $G$-kernel is a group homomorphism from a group $G$ to the outer automorphism group of a C$^*$-algebra.  
Inspired by recent work of Evington and Gir\'{o}n Pacheco in the stably finite case, we introduce a new invariant of 
a $G$-kernel using $K$-theory, and deduce several new constraints of the obstruction classes of 
$G$-kernels in the purely infinite case. 
We classify $\Z^n$-kernels for strongly self-absorbing Kirchberg algebras in the bootstrap category in terms 
of our new invariant and the Dadarlat-Pennig theory of continuous fields of strongly self-absorbing C$^*$-algebras. 
\end{abstract}
\section{Introduction} 

A $G$-kernel is a group homomorphism from a group $G$ into the outer automorphism group $\Out(A)$ of an operator algebra $A$. 
With a $G$-kernel $\alpha:G\to \Out(A)$, we can associate the third cohomology obstruction $\ob(\alpha)\in H^3(G,U(Z(A)))$ 
for $\alpha$ to lift to a cocycle $G$-action on $A$ (see \cite{Su80}), which is the most significant invariant 
for $G$-kernels. 
Indeed, in the case of a countable discrete amenable group $G$ and the hyperfinite II$_1$ factor $A=\cR$, 
this is known to be a complete invariant up to conjugacy due to Connes \cite{C75}, \cite{C77}, Jones \cite{J80}, and Ocneanu \cite{O85}
(see also \cite{KT03}, \cite{KT07}, \cite{KT09}, and \cite{Mas22} for the case of the other injective factors). 
Finite group $G$-kernels and their obstruction classes also play important roles in the conformal field theory models 
(see  \cite{EG22} for example).

Working on group actions on operator algebras, for a long time the present author expected an interesting interplay between 
K-theory and the third cohomology obstruction arising from $G$-kernels of C$^*$-algebras. 
The first result in this direction was obtained only recently by Evington and Gir\'{o}n Pacheco \cite{EGP23}, where 
they showed that $\ob(\alpha)$ is always trivial for the Jiang-Su algebra $A=\cZ$, and that a strong K-theoretical restriction 
occurs if $G$ is finite and $A$ is a UHF algebra. 
Their main technical tool is the de la Harpe-Skandalis determinant, algebraic K-theory in other words, 
which works only for stably finite C$^*$-algebras. 
Therefore it is desirable to introduce an alternative invariant by using only the topological K-theory, which is applicable 
to a wider class of C$^*$-algebras. 
One of the purposes of this paper is to accomplish this task.  

For a unital simple C$^*$-algebra $A$ whose unitary group $U(A)$ is connected, we introduce a new invariant 
$\tob(\alpha)\in H^3(G,K^\#_0(A))$ of a $G$-kernel $\alpha$, where $K^\#_0(A)$ is an extension of $\T$ by $K_0(A)$ 
for a large class of C$^*$-algebras (see Definition \ref{deftob}). 
The usual obstruction $\ob(\alpha)$ is the image of $\tob(\alpha)$ under the map induced by the surjection $K^\#_0(A)\to \T$, and hence 
$\tob(\alpha)$ carries more information than $\ob(\alpha)$. 
As we expected, our new invariant $\tob(\alpha)$ give significantly strong restriction to $\ob(\alpha)$ in the purely infinite case. 
For example, we can show by using $\tob(\alpha)$ that $\ob(\alpha)$ is trivial for the following two cases: 
(i) $G=\Z_2$ and the odd Cuntz algebras $A=\cO_{2n+1}$ (Theorem \ref{finite Cuntz}), 
and (ii) any finite $G$ and the infinite Cuntz algebra $A=\cO_\infty$ (Theorem \ref{infinite Cuntz}). 
In particular, we see that $\cZ$ and $\cO_\infty$ behave in the same way as far as finite group $G$-kernels are concerned, 
which adds a new example to many common features shared by $\cZ$ and $\cO_\infty$. 
However, when $G$ is infinite, e.g. $G=\Z^n$ with $n\geq 3$, the situation is completely different and $\cO_\infty$ 
may have non-trivial $\ob(\alpha)$. 

The recent striking work of Gabe and Szab\'o \cite{GS} on the dynamical Kirchberg-Phillips theorem shows that the theory of 
group actions on Kirchberg algebras has sufficiently matured by now, and we have enough tools to start classification 
of $G$-kernels too.  
Their result together with Meyer's work \cite{Me19} solved a conjecture raised by the author \cite{I10},\cite{IMII}, 
which allows us to use continuous fields of C$^*$-algebras to study group actions on the Kirchberg algebras. 
This typically works very well for (stabilized) strongly self-absorbing Kirchberg algebras thanks to Dadarlat-Pennig's generalized 
Dixmier-Doudary theory \cite{DP-I},\cite{DP-II}. 
Therefore it is reasonable to start classification of $G$-kernels with strongly self-absorbing Kirhberg algebras, 
which is another purpose of this paper. 
In Theorem \ref{main}, we classify $\Z^n$-kernels for the strongly self-absorbing Kirchberg algebras in the bootstrap category, 
in terms of our new invariant and the Dadarlat-Pennig theory.  

Looking back on my fledgling period, I feel very fortunate to have attended Kirchberg's famous Geneva talk in summer of 1994, 
and  his serial lectures in the Fields Institute in the subsequent winter period (his famous forever preprint \cite{Kir94} 
reminds me of the atmosphere at that time).
Without these valuable experiences, I would never have devoted my work to the study of C$^*$-algebras as seriously.  

The author would like to thank Sergio Gir\'{o}n Pacheco, Ulrich Pennig, Hiroki Matui, and Yuhei Suzuki for useful discussions, and 
the anonymous referee for careful reading. 
He would like to thank Isaac Newton Institute for its hospitality.  

\section{Preliminaries}
Throughout this paper, we assume that $A$ is a simple unital C$^*$-algebra.  
We denote by $U(A)$ the unitary group of $A$, and by $\Aut(A)$ the automorphism group of $A$.   
For $u\in U(A)$, we denote by $\Ad u$ the automorphism of $A$ defined by $\Ad u(x)=uxu^*$ for $x\in A$. 
An automorphism of $A$ is called inner if it is of the form $\Ad u$. 
We denote by $\Inn(A)$ the set of inner automorphisms, which is a normal subgroup of $\Aut(A)$. 
The outer automorphism group $\Out(A)$ of $A$ is defined by the quotient group $\Aut(A)/\Inn(A)$. 

By a trace of a C$^*$-algebra, we always mean a tracial state. 
We denote $\tK_0(A)=K_0(A)/\langle[1]_0\rangle$, where $[1]_0$ is the $K_0$-class of $1_A$. 
We denote by $\tau_*$ the homomorphism from $K_0(A)$ to $\R$ induced by a trace $\tau$. 
We denote by $\K$ the algebra of compact operators on a separable infinite dimensional Hilbert space, 
and by $A^s$ the stabilization $A\otimes \K$ of $A$. 
For an integer $n\geq 2$, we denote by $M_{n^\infty}$ the UHF algebra of type $n^\infty$. 
For a set $\fP$ of prime numbers, we denote 
$$M_{\fP^\infty}=\bigotimes_{p\in \fP}M_{p^\infty},$$
where by convention $M_\emptyset=\C$.  
If $\fP$ is the set of all prime numbers, we denote $M_\Q=M_{\fP^\infty}$. 
We denote by $\cO_n$ the Cuntz algebra, and by $\cZ$ the Jiang-Su algebra. 

We always assume that $G$ is a countable (or finite) discrete group. 
For a subgroup $H$ of $G$, the quotient map $G\to G/H$ is denoted by $q_{G\to G/H}$, or simply by $q$ 
if there is no possibility of confusion. 
For $n\in \N$, we denote $\Z_n=\Z/n\Z$. 

We denote $\T=\{z\in \C;\; |z|=1\}$, which is a multiplicative group. 
When we identify $\T$ with $\R/\Z$, we always use the surjection $\R\ni r\mapsto e^{2\pi i r}\in \T$.   

A $G$-kernel is a group homomorphism $\alpha:G\to \Out(A)$, and we always assume that $\alpha$ is \textit{injective} 
in this paper. 

We recall the definition of the obstruction class $\ob(\alpha)\in H^3(G,\T)$ of a $G$-kernel $\alpha:G\to \Out(A)$ now. 
We choose a set theoretical lifting $\talpha:G\to \Aut(A)$ of $\alpha_g$ and unitaries $u(g,h)\in U(A)$ satisfying 
$$\talpha_g\circ \talpha_h=\Ad u(g,h)\circ \talpha_{gh}.$$ 
We call such a pair $(\talpha,u)$ a lifting of $\alpha$ (called an anomalous action in \cite{J21}).  
Associativity implies 
$$\Ad\left(\talpha_g(u(h,k))u(g,hk)\right)\circ \talpha_{ghk}=\Ad(u(g,h)u(gh,k))\circ \talpha_{ghk},$$
and since the center of $A$ is trivial, there exists $\omega(g,h,k)\in \T$ satisfying 
\begin{equation}\label{obs}
\talpha_g(u(h,k))u(g,hk)=\omega(g,h,k)u(g,h)u(gh,k).
\end{equation}
We can show that $\omega$ satisfies the 3-cocycle relation. 
The definition of $\omega\in Z^3(G,\T)$ depends on the choice of the lifting $(\talpha,u)$. 
However, a different choice replaces $\talpha_g$ with $\Ad v_g\circ \talpha_g$ and $u(g,h)$ with $\mu(g,h)v_g\talpha_g(v_h)u(g,h)v_{gh}^*$,
where $v_g\in U(A)$ and $\mu(g,h)\in \T$. 
This ends up replacing $\omega(g,h,k)$ with $\partial \mu(g,h,k) \omega(g,h,k)$, and hence the cohomology class 
$[\omega]\in H^3(G,\T)$ depends only on $\alpha$. 

\begin{definition} For a $G$-kernel $\alpha$, we define the obstruction class $\ob(\alpha)\in H^3(G,\T)$ of $\alpha$ by 
the cohomology class of the cocycle $\omega$ in Eq.(\ref{obs}). 
The obstruction class $\ob(\alpha)$ depends only on the conjugacy class of $\alpha$ in $\Hom(G,\Out(A))$.  
\end{definition}
 
For a given pair of $A$ and $G$, we can ask the following two fundamental problems about $G$-kernels of $A$: 
\begin{itemize}
\item[(1)] The realization problem determining possible values of $\ob(\alpha)$. 
\item[(2)] The classification problem seeking sufficiently many invariants to distinguish $G$-kernels up to conjugacy. 
\end{itemize}
When $c\in H^3(G,\T)$ is realized as $\ob(\alpha)$ of a $G$-kernel $\alpha:G\to \Out(A)$, we say that $c$ is realized in $A$ 
for simplicity.  
We introduce several variants of the obstruction class in this paper, and we ask (1) for these variants too. 

As mentioned in the introduction, the two problems are completely solved for the hyperfinite II$_1$ factor $\cR$ and amenable $G$. 

\begin{theorem}[Connes, Jones, Ocneanu] Let $G$ be a countable amenable group, and let $\cR$ be the hyperfinite II$_1$ factor. 
Then the third cohomology obstruction is a completely invariant for $G$-kernels up to conjugacy. 
Moreover, every class in $H^3(G,\T)$ can be realized in $\cR$.  
\end{theorem}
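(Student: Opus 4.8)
The plan is to separate the two assertions --- realization of every class in $H^3(G,\T)$, and completeness of $\ob$ --- and to reduce both to statements about \emph{anomalous} (twisted cocycle) $G$-actions. Given $G$-kernels $\alpha,\beta:G\to\Out(\cR)$, fix liftings $(\talpha,u)$ and $(\tbeta,v)$ with associated $3$-cocycles $\omega^\alpha,\omega^\beta\in Z^3(G,\T)$ as in Eq.~(\ref{obs}). Two $G$-kernels are conjugate in $\Hom(G,\Out(\cR))$ exactly when their liftings are \emph{cocycle conjugate}: there are $\theta\in\Aut(\cR)$ and unitaries $(w_g)_{g\in G}$ with $\theta\circ\talpha_g\circ\theta^{-1}=\Ad w_g\circ\tbeta_g$ for all $g$, compatibly with $u$ and $v$. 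If $\ob(\alpha)=\ob(\beta)$, then after multiplying the cocycle unitaries of one lifting by a scalar $2$-cochain $\mu$ --- which leaves the lifting unchanged but shifts its $3$-cocycle by $\partial\mu$ --- I may assume $\omega^\alpha=\omega^\beta=:\omega$. Thus the theorem becomes: for every $\omega\in Z^3(G,\T)$ the factor $\cR$ carries a free $\omega$-anomalous $G$-action, and any two such are cocycle conjugate. (Freeness --- each $\talpha_g$, $g\neq e$, properly outer --- is automatic for a factor once $\alpha$ is injective, which is our standing hypothesis.)

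For realization I would exhibit one model. The cleanest route is categorical: the pointed rigid C$^*$-tensor category $\mathrm{Vec}_G^\omega$ of $G$-graded Hilbert spaces with associativity constraint twisted by $\omega$ has fusion algebra $\C[G]$, hence is amenable because $G$ is; by Popa's theorem that amenable rigid C$^*$-tensor categories are realized inside $\mathrm{End}(\cR)$, it embeds there monoidally. Its invertible simple objects then furnish pairwise non-isomorphic automorphisms $\talpha_g$ with $\talpha_e=\id$, intertwiners $u(g,h)$ implementing $\talpha_g\talpha_h\cong\talpha_{gh}$, and pentagon cocycle $\omega$; being non-isomorphic to $\id$ forces proper outerness, so the action is free. (For finite $G$ one could instead write an explicit model on an infinite tensor product of matrix algebras; only the existence of a single model is needed here.)

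The completeness statement is the deep ingredient --- it is Ocneanu's theorem, extending Connes's analysis of a single automorphism and of amenable group actions to the anomalous setting. I would show that any two free $\omega$-anomalous $G$-actions on $\cR$ are cocycle conjugate by the Connes--Ocneanu intertwining scheme: (i) an \emph{equivariant Rokhlin lemma} --- using a F{\o}lner exhaustion of $G$ and property $\Gamma$ of $\cR$, produce approximately multiplicative, approximately equivariant partitions of unity in $\cR$ coherent with the cocycle unitaries $u(g,h)$, where the twist forces one to trivialize a $\T$-gerbe rather than a bundle; (ii) \emph{cohomology vanishing} --- since $G$ is amenable, the relevant first and second cohomology of $G$ with coefficients in the unitary group of the asymptotic central sequence algebra of $\cR$, and in $\T$, vanish (Connes's $2\times 2$-matrix trick turns approximate $1$-cocycles into coboundaries), so the prescribed $\omega$ is the only surviving datum; (iii) an \emph{intertwining argument} assembling $\theta$ and the unitaries $w_g$ as a limit of perturbations that successively correct the action on an increasing sequence of finite-dimensional subalgebras. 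Together these steps --- after first tensoring with the untwisted model $G$-action on $\cR$, which leaves the cocycle conjugacy class unchanged but installs the equivariant Rokhlin property --- yield that any two free $\omega$-anomalous actions on $\cR$ are cocycle conjugate; combined with the realization step this gives both halves of the theorem, completeness of $\ob$ included.

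The hard part is (i)--(ii) in the twisted setting: the Rokhlin towers must be built coherently with the $2$-cochain $u$, and the vanishing estimates pushed through uniformly in the F{\o}lner parameter. Amenability of $G$ is used essentially there (and also, in this argument, to place $\mathrm{Vec}_G^\omega$ inside $\mathrm{End}(\cR)$), and indeed $\ob$ is no longer complete outside the amenable range. Everything else --- the reduction to a common $\omega$, and the descent of cocycle conjugacy to conjugacy in $\Hom(G,\Out(\cR))$ --- is comparatively formal.
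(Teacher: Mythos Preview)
The paper does not prove this theorem at all: it is stated as a classical result, attributed in the text and the introduction to Connes \cite{C75,C77}, Jones \cite{J80}, and Ocneanu \cite{O85}, and is used only as background motivation. There is therefore no ``paper's own proof'' to compare your proposal against.

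Your sketch is a reasonable high-level outline of how the original proofs go, though with one anachronism: for the realization step you invoke Popa's embedding theorem for amenable rigid C$^*$-tensor categories, which postdates the Connes--Jones--Ocneanu work by more than a decade. The original realization arguments are constructive --- Jones builds explicit models on infinite tensor products for finite $G$, and Ocneanu handles the general amenable case directly. Your outline of the uniqueness part (equivariant Rokhlin lemma, cohomology vanishing in the central sequence algebra, Bratteli--Elliott--Evans--Kishimoto style intertwining) is faithful to Ocneanu's strategy, but as written it is a sketch of a sketch: the genuinely hard content --- making the Rokhlin towers compatible with the $2$-cochain and controlling the twisted cohomology uniformly --- is asserted rather than argued. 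Since the paper treats the theorem as a black box, this level of detail is appropriate as a citation summary, but it would not stand as an independent proof.
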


As pointed out in \cite{EGP23}, the realization part of the above work has important implications in C$^*$-algebras too. 
For example, Connes' construction in \cite{C77} shows that every class in $H^3(\Z_n,\T)$ is realized in $M_{n^\infty}$. 

More generally, Jones' construction in \cite{J79}, \cite{J80} shows the following (see \cite[Theorem 4.3]{EGP23}): 

\begin{theorem}\label{J} For every finite group $G$, every class in $H^3(G,\T)$ is realized in $M_{|G|^\infty}$. 
\end{theorem}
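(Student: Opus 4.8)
The plan is to reduce the statement to the explicit model construction of Jones \cite{J79},\cite{J80}: given a normalized $3$-cocycle $\omega\in Z^3(G,\T)$ representing the prescribed class (so $\omega(g,h,k)=1$ whenever one of the entries is $e$), exhibit a lifting $(\talpha,u)$ of a $G$-kernel $\alpha$ on $M_{|G|^\infty}$ for which the $3$-cocycle appearing on the right-hand side of Eq.~(\ref{obs}) is exactly $\omega$, and for which $\alpha$ is injective. Writing $n=|G|$, I would realize the target as the infinite tensor product $M_{n^\infty}=\bigotimes_{k=1}^\infty B(\ell^2(G))$ of copies of $M_n=B(\ell^2(G))$, which is the natural home for the regular representation of $G$.

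For the lifting I would use the $\omega$-twisted regular representation. Fix the orthonormal basis $\{\xi_x\}_{x\in G}$ of $\ell^2(G)$, let $\lambda$ be the left regular representation ($\lambda_g\xi_x=\xi_{gx}$), and for $g\in G$ let $D(g)\in U(\ell^2(G))$ be the diagonal unitary built from the phases $\omega(g,\cdot,\cdot)$. Using one copy of these operators on each tensor leg, together with a ``tail correction'' that redistributes the $\omega$-phases along the legs so that only finitely many are affected at a time, one defines set-theoretic automorphisms $\talpha_g$ of $\bigotimes_k B(\ell^2(G))$ and connecting unitaries $u(g,h)$ with $\talpha_g\circ\talpha_h=\Ad u(g,h)\circ\talpha_{gh}$. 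Conceptually this is the statement that an indecomposable module category over the pointed fusion category $\mathrm{Vec}_G^\omega$ cannot trivialize the associator $\omega$; equivalently one may present $M_{n^\infty}$ as the inductive limit $\varinjlim_m\End(R^{\otimes m}\otimes N)$, with $R=\bigoplus_{x\in G}\C_x$ the regular object of $\mathrm{Vec}_G^\omega$ and $N$ a fixed simple object of the regular $\mathrm{Vec}_G^\omega$-module category, the $\talpha_g$ being induced by the endofunctors $\C_g\otimes(-)$ together with the isomorphisms $\C_g\otimes R\cong R$; the obstruction of the resulting $G$-kernel is then read off from the pentagon axiom as $[\omega]$.

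The bulk of the work, and the step I expect to be the main obstacle, is the bookkeeping needed to make this precise: one must set up the tail correction so that (a) each $\talpha_g$ is a genuine $*$-automorphism of the C$^*$-algebraic (not merely algebraic) infinite tensor product, (b) the $\talpha_g$ are compatible, up to the inner automorphisms $\Ad u(g,h)$, with an increasing sequence of full matrix subalgebras whose union is dense in $M_{n^\infty}$, and (c) substituting $(\talpha,u)$ into the left-hand side of Eq.~(\ref{obs}) produces $\omega(g,h,k)\,u(g,h)u(gh,k)$ \emph{on the nose} -- not merely something cohomologous, and certainly not something trivial. Step (c) is a direct, if somewhat intricate, computation that ultimately reduces to the $3$-cocycle identity for $\omega$; I would verify it first on the dense $*$-subalgebra and then extend by continuity.

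Finally, injectivity of $\alpha$ -- that $\talpha_g$ is outer for $g\neq e$ -- follows by a standard argument using uniqueness of the trace on $M_{n^\infty}$ together with the (essentially product-type) shape of $\talpha_g$: if $\talpha_g=\Ad v$ for some $v\in U(M_{n^\infty})$, approximating $v$ within a finite tensor stage and evaluating $\talpha_g$ on a matrix unit in a sufficiently far leg forces $\lambda_g$ to be a scalar, whence $g=e$. It is worth recording that $H^3(G,\T)\cong H^4(G,\Z)$ is finite of exponent dividing $|G|$, so after replacing $\omega$ by a cohomologous cocycle we may assume it is valued in the $|G|$-th roots of unity; this, together with the appearance of the regular representation, is why $M_{|G|^\infty}$ is exactly the right target. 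One should of course cross-check the construction against \cite[Theorem 4.3]{EGP23}.
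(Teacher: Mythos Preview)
The paper does not supply a proof of this theorem: it is stated as a known consequence of Jones' construction, with citations to \cite{J79}, \cite{J80} and to \cite[Theorem 4.3]{EGP23}, and nothing more. So there is no ``paper's proof'' to compare against; your proposal is effectively a sketch of the cited construction rather than an alternative to anything in the present paper.

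As a sketch of Jones' construction your outline is on the right track, and the tensor-category formulation via the regular module category of $\mathrm{Vec}_G^\omega$ (the inductive limit $\varinjlim_m \End(R^{\otimes m}\otimes N)$ with the $G$-kernel coming from $\C_g\otimes(-)$) is the cleanest way to see both that the obstruction is $[\omega]$ and that the target is exactly $M_{|G|^\infty}$. Two caveats. First, your ``direct'' description is imprecise: the phrase ``the diagonal unitary $D(g)$ built from the phases $\omega(g,\cdot,\cdot)$'' does not specify a unitary on $\ell^2(G)$, since $\omega(g,\cdot,\cdot)$ has two free arguments; in the explicit model the diagonal phases that actually appear are $\omega(g,h,x)$ sitting in the connecting unitaries $u(g,h)$, not in the $\talpha_g$ themselves, and $\talpha_g$ on each leg is essentially $\Ad\lambda_g$. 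Second, the ``tail correction'' is where the entire content lives, and ``redistributes the $\omega$-phases along the legs'' is not yet a construction; if you want a self-contained argument rather than a citation, you must either carry out the bookkeeping in full (as in \cite[Theorem 4.3]{EGP23}) or commit to the categorical picture and verify the pentagon computation explicitly. Your outerness argument via the unique trace is fine.
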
 

Combining Jones' construction with Kirchberg's $\cO_2$ theorem, we get the following:   

\begin{theorem}\label{JK} For every countable discrete group $G$, every class in $H^3(G,\T)$ 
is realized in the Cuntz algebra $\cO_2$. 
\end{theorem}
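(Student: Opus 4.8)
The plan is to combine Theorem \ref{J} with the absorption property of $\cO_2$ and the naturality of the obstruction class under embeddings. Given a countable discrete group $G$ and a class $c\in H^3(G,\T)$, the first step is to realize $c$ in some Kirchberg algebra in a way that we can transport into $\cO_2$. Since $c$ is a $3$-cocycle on $G$ built from finitely many group elements at a time, one natural approach is to exhaust $G$ by its finitely generated subgroups, but the cleaner route is to use the fact that every countable group embeds into a group for which Jones' construction applies directly, or simply to produce a lifting $(\talpha, u)$ on $\cO_2$ itself by hand. Concretely, I would first treat the finite quotient case: if $c$ is pulled back from a finite quotient $G\to Q$ via $q$, then by Theorem \ref{J} there is a $Q$-kernel on $M_{|Q|^\infty}$ realizing the corresponding class, and pulling back along $q$ gives a $G$-kernel on $M_{|Q|^\infty}$ with obstruction $c$.

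The second step handles the general $c$, which need not factor through a finite quotient. Here I would invoke the standard fact (used implicitly in \cite{EGP23} and going back to Jones) that for \emph{any} countable discrete $G$ and \emph{any} $c\in Z^3(G,\T)$ there is an anomalous action, i.e. a lifting $(\talpha, u)$, on the hyperfinite $\mathrm{II}_1$ factor $\cR$ — this is exactly the realization half of the Connes--Jones--Ocneanu theorem quoted above. But we want a C$^*$-algebraic model. The key observation is that Jones' construction is essentially algebraic: it produces a lifting on an infinite tensor product of matrix algebras indexed by $G$, giving an anomalous $G$-action on the UHF-type algebra $\bigotimes_{g\in G} M_{n_g}$ for suitable $n_g$ depending on $c$ (when $G$ is infinite this is a separable UHF algebra, not $M_{|G|^\infty}$, but the construction still goes through as it only uses the multiplication table of $G$ a few elements at a time). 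Call this algebra $B$ and the lifting $(\talpha, u)$ on $B$ with $\ob = c$.

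The third and final step is the $\cO_2$-absorption argument. Form $A = B\otimes \cO_2 \cong \cO_2$ by Kirchberg's theorem, and extend the lifting by $\talpha_g\otimes \id_{\cO_2}$ with the same unitaries $u(g,h)\otimes 1$. The associated $3$-cocycle is unchanged, because the defining equation \eqref{obs} is preserved under $-\otimes\id_{\cO_2}$ and the center of $A$ is still $\C$, so the scalars $\omega(g,h,k)$ are literally the same. One must check that the resulting $G$-kernel $\alpha:G\to\Out(\cO_2)$ is still injective: if $\talpha_g\otimes\id$ were inner on $B\otimes\cO_2$ for some $g\neq e$, a standard argument (restricting to the relative commutant or using that $\id_{\cO_2}$ is not inner "absorbing" any outerness of $\talpha_g$) shows $\talpha_g$ would have to be inner on $B$; since our construction can be arranged so that $\alpha$ is injective on $B$ — or, if not, we simply quotient $G$ by the kernel, noting the obstruction descends — we obtain an injective $G$-kernel on $\cO_2$ with $\ob(\alpha)=c$.

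The main obstacle I anticipate is the second step: making sure Jones' realization construction genuinely works for infinite $G$ producing a \emph{separable} C$^*$-algebra with an \emph{injective} $G$-kernel, rather than just an anomalous action on $\cR$ which is a von Neumann algebra. One has to either cite the appropriate C$^*$-version of Jones' construction, or give a short direct construction on an infinite tensor product of matrix algebras; ensuring injectivity of the resulting $G$-kernel (equivalently, that no nontrivial $\talpha_g$ is approximately inner in a way that becomes inner after $\cO_2$-stabilization) is the delicate point, though in the worst case one replaces $G$ by $G/\ker\alpha$ and lifts $c$ accordingly. Everything else — the pullback compatibility of $\ob$ with $q_{G\to Q}$ and the invariance of $\ob$ under $-\otimes\id_{\cO_2}$ — is formal from the definition in \eqref{obs}.
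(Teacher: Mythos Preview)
Your third step (tensor with $\cO_2$ and invoke Kirchberg's absorption theorem) is exactly what the paper does, and your observations about $\ob$ being preserved under $-\otimes\id$ are correct. The gap is precisely where you suspect: your second step. You propose to realize an arbitrary $c\in H^3(G,\T)$ for infinite $G$ by running Jones' matrix-algebra construction on some separable UHF-type algebra $\bigotimes_{g\in G}M_{n_g}$, but this is not how Jones' argument works, and no such direct C$^*$-construction is available in the literature. Jones' finite-group construction (Theorem~\ref{J}) genuinely uses finiteness of $G$ to build the model on $M_{|G|^\infty}$; there is no obvious way to make ``$n_g$ depending on $c$ a few elements at a time'' into an honest anomalous action on a separable UHF algebra for infinite $G$. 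Your fallback of passing to $G/\ker\alpha$ is also not a fix: the statement requires a $G$-kernel, and there is no reason the given class $c$ should descend to the quotient.

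The paper takes a genuinely different route that sidesteps this. It invokes another result of Jones \cite[Lemma~2.3]{J79}: for any countable $G$ and any $c\in H^3(G,\T)$ there is a countable group $\tilde G$ with a surjection $p:\tilde G\to G$ whose kernel $N$ is abelian and such that $p^*c=0$. One then picks any outer (e.g.\ quasi-free) action $\beta$ of $\tilde G$ on $\cO_\infty$; since $p^*c$ is a coboundary, a further result \cite[Theorem~2.5]{J79} produces a $2$-cocycle $\mu\in Z^2(N,\T)$ so that the induced $G$-kernel on the twisted crossed product $A=\cO_\infty\rtimes_{\beta,\mu}N$ has obstruction exactly $c$. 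Outerness of $\beta$ ensures both that $A$ is a Kirchberg algebra and that the resulting $G$-kernel is injective. Only then does one tensor with $\cO_2$. So the key idea you are missing is the group-extension trick (killing the $3$-cocycle upstairs and recovering it via a twisted crossed product by the abelian kernel), not a direct UHF realization.
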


\begin{proof} Let $c\in H^3(G,\T)$. 
Then \cite[Lemma 2.3]{J79} shows that there exists a countable group $\tilde{G}$ with a surjective homomorphism 
$p:\tilde{G}\to G$ such that $N=\ker p$ is abelian and $p^*c=0$ in $H^3(\tilde{G},\T)$. 
We choose an outer action (say a faithful quasi-free action) $\beta$ of $\tilde{G}$ on $\cO_\infty$. 
Then \cite[Theorem 2.5]{J79} shows that the class $c$ is realized in the twisted crossed product 
$A=\cO_\infty\rtimes_{\beta,\mu} N$ with an appropriate 2-cocycle $\mu\in Z^2(N,\T)$ 
(see \cite[Section 3]{J21} and \cite[Corollary 6.2.5]{GP23} too).  
Since $\beta$ is outer and $N$ is abelian, $A$ is a Kirchberg algebra. 
Now as $c$ is realized in $A\otimes \cO_2$ too, Kirchberg's $\cO_2$ theorem finishes the proof. 
\end{proof}

We recall Evington and Gir\'{o}n Pacheco's argument in \cite{EGP23}. 
Let $\tau$ be a trace, and let $u\in U(A)_0$, where $U(A)_0$ is the connected component of $1_A$ in $U(A)$. 
Then the de la Harpe Skandalis determinant $\Delta_\tau(u)\in \R+\tau_*(K_0(A))$ is defined as follows. 
We choose a smooth path $\{\tilde{u}(t)\}_{t\in [0,1]}$ from $1$ to $u$ in $U(A)_0$, and set 
$$\Delta_\tau(u)=\frac{1}{2\pi i}\int_0^1\tau(\tilde{u}(t)^{-1}\tilde{u}'(t))dt+\tau_*(K_0(A)).$$
Then $\Delta_\tau: U(A)_0\to \R/\tau_*(K_0(A))$ is a well-defined group homomorphism (see \cite[Lemme 1, Proposition 2]{HS84}). 

Assume now that $U(A)$ is connected and $A$ has a trace $\tau$ preserved by a $G$-kernel $\alpha$. 
Let $(\talpha,u)$ be a lifting of a $G$-kernel $\alpha:G\to \Out(A)$. 
Then we have
$$\Delta_\tau(u(h,k))+\Delta_\tau(u(g,hk))=\Delta_\tau(\omega(g,h,k))+\Delta_\tau(u(g,h))+\Delta_\tau(u(gh,k)).$$
Let $s:\T\to \R/\tau_*(K_0(A))$ be the map given by $e^{2\pi i t}\mapsto t+\tau_*(K_0(A))$. 
Then the above equality means $s_*(\ob(\alpha))=0$. 
This immediately implies the following theorem. 

\begin{theorem}[{\cite[Theorem A]{EGP23}}] For the Jiang-Su algebra $\cZ$, the obstruction $\ob(\alpha)$ is always trivial 
for any  discrete group $G$ and any $G$-kernel $\alpha:G\to \Out(\cZ)$. 
\end{theorem}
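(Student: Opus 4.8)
The plan is to combine two facts already assembled in the excerpt: the de la Harpe–Skandalis argument showing $s_*(\ob(\alpha))=0$, and the special structure of the Jiang-Su algebra $\cZ$. Since $\cZ$ is unital, simple, with a unique trace $\tau$, and since $K_0(\cZ)\cong\Z$ with $[1]_0$ a generator, we have $\tau_*(K_0(\cZ))=\Z$, so that $\R/\tau_*(K_0(\cZ))=\R/\Z\cong\T$. Under this identification the map $s:\T\to\R/\tau_*(K_0(\cZ))$ becomes precisely the identity map $\T\to\T$ (this is why the chosen normalization $\R\ni r\mapsto e^{2\pi i r}\in\T$ for the identification $\T\cong\R/\Z$ matters). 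Consequently $s_*:H^3(G,\T)\to H^3(G,\R/\tau_*(K_0(\cZ)))$ is an isomorphism, in fact the identity, and the vanishing $s_*(\ob(\alpha))=0$ forces $\ob(\alpha)=0$ directly.

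First I would verify the hypotheses needed to run the de la Harpe–Skandalis machinery for $A=\cZ$: namely that $U(\cZ)$ is connected, that $\cZ$ has a trace, and that this trace is automatically preserved by any $G$-kernel. Connectedness of $U(\cZ)$ is standard (it follows, for instance, from $\cZ$ being a strongly self-absorbing, $K_1$-trivial, approximately divisible C$^*$-algebra). Existence of a trace is Jiang–Su's theorem that $\cZ$ is monotracial. Uniqueness of the trace is the crucial extra ingredient: because $\tau$ is the unique tracial state on $\cZ$, every automorphism $\theta\in\Aut(\cZ)$ satisfies $\tau\circ\theta=\tau$, so in particular any set-theoretic lift $\talpha_g$ of a $G$-kernel preserves $\tau$. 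Thus the hypothesis ``$A$ has a trace preserved by $\alpha$'' in the argument preceding Theorem~1.3 (in the excerpt, the unnumbered lead-in to \cite[Theorem A]{EGP23}) is satisfied for free.

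Next I would carry out the $K$-theory identification. By Jiang–Su, $(K_0(\cZ),K_0(\cZ)_+,[1]_0)\cong(\Z,\Z_{\geq0},1)$ and $K_1(\cZ)=0$. The pairing of the unique trace with $K_0$ sends the generator $[1]_0$ to $\tau(1)=1$, so $\tau_*(K_0(\cZ))=\Z\subset\R$. Hence the target $\R/\tau_*(K_0(\cZ))$ of the determinant equals $\R/\Z$, and composing with the fixed isomorphism $\R/\Z\cong\T$, $r+\Z\mapsto e^{2\pi i r}$, one checks that $s:\T\to\R/\tau_*(K_0(\cZ))$, $e^{2\pi i t}\mapsto t+\Z$, is the inverse of this isomorphism; in particular $s$ is a group isomorphism. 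Therefore the induced map on cohomology $s_*:H^3(G,\T)\to H^3(G,\R/\tau_*(K_0(\cZ)))$ is an isomorphism for every $G$. Since we already know $s_*(\ob(\alpha))=0$, applying $(s_*)^{-1}$ gives $\ob(\alpha)=0$, which proves the theorem for all discrete $G$ and all $G$-kernels $\alpha:G\to\Out(\cZ)$.

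The only mild subtlety — and the step I would be most careful about — is the bookkeeping of which direction the map $s$ goes and that it is genuinely an isomorphism rather than merely a surjection or injection; this is entirely an artifact of $\tau_*(K_0(\cZ))$ being the full lattice $\Z$ rather than a proper subgroup (contrast the UHF case $M_{n^\infty}$, where $\tau_*(K_0)=\Z[1/n]$ is dense and $s$ has huge kernel, which is exactly why the conclusion there is a constraint rather than outright vanishing). There is no real analytic difficulty: the de la Harpe–Skandalis identity $s_*(\ob(\alpha))=0$ is already granted by the excerpt. So the proof is essentially: monotraciality $\Rightarrow$ trace invariance $\Rightarrow$ the determinant argument applies; $K_0(\cZ)=\Z$ with $\tau_*[1]_0=1$ $\Rightarrow$ $s$ is an iso $\Rightarrow$ $\ob(\alpha)=0$.
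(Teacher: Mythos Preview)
Your proposal is correct and follows essentially the same approach as the paper: the paper's proof consists of the single sentence ``This immediately implies the following theorem,'' and what you have written is precisely the unpacking of that implication --- namely that for $\cZ$ one has $\tau_*(K_0(\cZ))=\Z$, so $s:\T\to\R/\tau_*(K_0(\cZ))$ is an isomorphism and $s_*(\ob(\alpha))=0$ forces $\ob(\alpha)=0$. Your verification of the hypotheses (connectedness of $U(\cZ)$, automatic trace-invariance from monotraciality) is appropriate and matches what the paper implicitly assumes.
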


Using $s_*(\ob(\alpha))=0$ together with the fact that a $G$-kernel restricts to corners by projections keeping 
the same obstruction, they also obtained the following theorem. 

\begin{theorem}[{\cite[Theorem B]{EGP23}}] Let $A$ be a UHF algebra and let $G$ be a finite group. 
Let $\alpha:G\to \Out(A)$ be a $G$-kernel, and let $r$ be the order of $\ob(\alpha)$, which is not 0. 
Then $A\cong A\otimes M_{r^\infty}$.  
\end{theorem}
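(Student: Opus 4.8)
The plan is to argue by contradiction, using the two facts highlighted above — that $s_*(\ob(\alpha))=0$ for the unique trace of a UHF algebra, and that a $G$-kernel restricts to corners with unchanged obstruction — but applied to a cleverly chosen corner. The key observation is that, although a corner $eAe$ of a UHF algebra $A$ is again UHF, its supernatural number is $d$ divided by whichever finite matrix stage contains $e$, so by placing $e$ deep enough one can strip off any prescribed finite power of a given prime from $d$. Thus from a $G$-kernel with nontrivial obstruction on $A$ one manufactures one on a strictly smaller UHF algebra, and eventually runs into the constraint $s_*(\ob)=0$.

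Concretely, write $A=\varinjlim_k M_{n_k}$ with $n_k\mid n_{k+1}$, let $d=\mathrm{lcm}_k\, n_k$ be its supernatural number, and recall that $U(A)=U(A)_0$, that $\tau_*:K_0(A)\to\R$ is injective with image $\Z[\{1/m:m\mid d\}]$, and that projections of $A$ of equal trace are unitarily equivalent. The conclusion $A\cong A\otimes M_{r^\infty}$ is equivalent to $v_p(d)=\infty$ for every prime $p\mid r$ (here $v_p$ is the $p$-adic valuation), so suppose instead that $p\mid r$ while $v_p(d)=:n<\infty$. As $(v_p(n_k))_k$ is nondecreasing with supremum $n$, it is eventually equal to $n$; fix $k_0$ with $v_p(n_{k_0})=n$ and a rank-one projection $e\in M_{n_{k_0}}\subseteq A$. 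Then $eAe\cong\varinjlim_{k\ge k_0}M_{n_k/n_{k_0}}$ is a UHF algebra with supernatural number $d'$ satisfying $v_p(d')=n-v_p(n_{k_0})=0$, so that $1/p\notin\tau'_*(K_0(eAe))$, where $\tau'$ is the normalized trace of $eAe$.

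To restrict $\alpha$ to $eAe$, fix a lifting $(\talpha,u)$ of $\alpha$ with cocycle $\omega\in Z^3(G,\T)$ as in Eq.(\ref{obs}). Each $\talpha_g$ preserves $\tau$, so $\talpha_g(e)$ has trace $\tau(e)$ and hence $\talpha_g(e)=w_g e w_g^*$ for some $w_g\in U(A)$; replacing $\talpha_g$ by $\Ad w_g^*\circ\talpha_g$ and $u(g,h)$ by the corresponding perturbation leaves $\ob(\alpha)=[\omega]$ unchanged, so one may assume $\talpha_g(e)=e$ for all $g$. Then $\Ad u(g,h)=\talpha_g\talpha_h\talpha_{gh}^{-1}$ fixes $e$, so every $u(g,h)$ commutes with $e$, and compressing by $e$ yields a lifting $(\talpha_g|_{eAe},\,eu(g,h)e)$ of a homomorphism $\alpha_e:G\to\Out(eAe)$; compressing Eq.(\ref{obs}) by $e$ (legitimate since $Z(eAe)=\C$) produces the same $3$-cocycle $\omega$, so $\ob(\alpha_e)=\ob(\alpha)$, which has order $r$.

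Finally, $eAe$ is UHF, hence has connected unitary group and a trace $\tau'$ preserved by the lifted automorphisms, so the de la Harpe--Skandalis argument recalled above applies to the compressed lifting $(\talpha_g|_{eAe},\,eu(g,h)e)$ and gives $s'_*(\ob(\alpha_e))=0$ for $s':\T\to\R/\tau'_*(K_0(eAe))$ defined as before. Since $G$ is finite, $H^3(G,\T)=\bigoplus_q H^3(G,\Z(q^\infty))$ splits into primary components, and because $1/p\notin\tau'_*(K_0(eAe))$ the composite $\Z(p^\infty)\hookrightarrow\T\xrightarrow{s'}\R/\tau'_*(K_0(eAe))$ is injective onto a direct summand of the divisible group $\R/\tau'_*(K_0(eAe))$, whence $s'_*$ is injective on the $p$-primary summand $H^3(G,\Z(p^\infty))$. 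Therefore the $p$-primary component of $\ob(\alpha_e)$ vanishes, contradicting $p\mid r=$ order of $\ob(\alpha_e)$; so $v_p(d)=\infty$ for all $p\mid r$, i.e. $A\cong A\otimes M_{r^\infty}$. The crux of the argument is the idea of the second paragraph — producing a genuinely smaller UHF algebra carrying the same obstruction — after which matters are essentially forced, the only delicate point being the cohomological bookkeeping in this last paragraph, which uses finiteness of $G$ and divisibility of $\R/\tau'_*(K_0(eAe))$.
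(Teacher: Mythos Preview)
Your proof is correct and follows exactly the approach that the paper attributes to Evington and Gir\'{o}n Pacheco: the two ingredients highlighted just before the theorem statement --- the vanishing $s_*(\ob(\alpha))=0$ coming from the de la Harpe--Skandalis determinant, and the fact that a $G$-kernel restricts to corners with unchanged obstruction --- are precisely what you use, and your corner $eAe$ is chosen so as to kill the $p$-part of the supernatural number. The only thing you add beyond the paper's one-line sketch is the explicit cohomological bookkeeping in the last paragraph, which is sound (the key point being that $\ker s'=\tau'_*(K_0(eAe))/\Z$ has trivial $p$-primary component, so $s'_*$ is injective on the $p$-primary part of $H^3(G,\T)$).
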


The coefficient short exact sequence 
\begin{equation}\label{CSES}
0\to \tau_*(K_0(A))/\Z\to \T\to \R/\tau_*(K_0(A))\to 0
\end{equation}
induces a long exact sequence of cohomology, and $s_*(\ob(\alpha))=0$ implies that $\ob(\alpha)$ comes from a class in $H^3(G,\tau_*(K_0(A))/\Z)$, 
which was used in the second theorem. 
In fact, it is more convenient to introduce an invariant valued in the cohomology group $H^3(G,\tau_*(K_0(A))/\Z)$ directly.  

\begin{definition} Assume that $A$ has a trace $\tau$ preserved by a $G$-kernel $\alpha:G\to \Out(A)$ and $U(A)$ is connected. 
We choose a lifting $(\talpha,u)$ of $\alpha$ satisfying $u(g,h)\in \ker \Delta_\tau$. 
Then the resulting cocycle $\omega$ determined by Eq.(\ref{obs}) satisfies 
$\omega(g,h,k)\in e^{2\pi i\tau_*(K_0(A))}$. 
With such a cocycle, we define 
$$\ob_\tau(\alpha)=[\omega]\in H^3(G,\tau_*(K_0(A))/\Z).$$ 
When $A$ has a unique trace and $K_0(A)$ has no infinitesimal elements, we simply denote 
$H^3(G,\tau_*(K_0(A))/\Z)$ by $H^3(G,\tK_0(A))$ (recall $\tK_0(A)=K_0(A)/\Z[1]_0$). 
\end{definition}

To see that $\ob_\tau(\alpha)$ is well defined, let $(\talpha',u')$ be another lifting of $\alpha$ satisfying 
$\Delta_\tau(u'(g,h))=0$. 
Then there exist $v_g\in U(A)$ and $\mu(g,h)\in \T$ satisfying $\talpha'_g=\Ad v_g\circ \talpha_g$, 
$u'(g,h)=\mu(g,h)v_g\talpha_g(v_h)u(g,h)v_{gh}^*$, and 
$$\Delta_\tau(\mu(g,h))=\Delta_\tau(v_{gh})-\Delta_\tau(v_g)-\Delta_\tau(v_h).$$
This means that there exist $\eta(g)\in \R$ and $\zeta(g,h)\in \tau_*K_0(A)$ satisfying
$$\mu(g,h)=e^{2\pi i \partial \eta(g,h)}e^{2\pi i \zeta(g,h)},$$
which shows that $\ob_\tau(\alpha)$ is well-defined. 

\begin{remark}\label{injective} The long exact sequence arising from the coefficient short exact sequence Eq.(\ref{CSES}) 
shows that the map  
$$H^3(G,\tau_*K_0(A)/\Z)\to H^3(G,\T)$$
is injective if and only if the connecting map 
$$\partial_A: H^2(G,\R/\tau_*K_0(A))\to H^3(G,\tau_*K_0(A)/\Z)$$
is 0. 
The universal coefficient theorem shows that this is equivalent to 
$$\Ext(H_2(G,\Z),\tau_*K_0(A)/\Z)=\{0\}.$$ 
This condition is satisfied if either $H_2(G,\Z)$ is free e.g. $G=\Z^n$, or $\tau_*K_0(A)/\Z$ is divisible e.g. $A=M_{\fP^\infty}$.  
\end{remark}

The above two theorems inspire the following two conjectures. 

\begin{conjecture}\label{C1} 
Let $G$ be a countable discrete group, and assume that $c\in H^3(G,\T)$ has a non-zero finite 
order $r$. 
Then $c$ is realized as $\ob(\alpha)$ of a $G$-kernel $\alpha:G\to \Out(M_{r^\infty})$. 
\end{conjecture}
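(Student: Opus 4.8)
The plan is to reverse-engineer Jones' construction (Theorem \ref{J}) so that the finiteness of the order $r$ is exploited to fit the resulting $G$-kernel inside $M_{r^\infty}$ rather than $M_{|G|^\infty}$. Recall that the key input is \cite[Lemma 2.3]{J79}: given $c\in H^3(G,\T)$ there is a central extension
\[
1\to N\to \tilde G\xrightarrow{p} G\to 1
\]
with $N$ abelian and $p^*c=0$. Since $c$ has order $r$, one can moreover arrange that $rc=0$ is witnessed already at the level of $N$; more precisely, I would choose $N$ so that the relevant 2-cocycle $\mu\in Z^2(N,\T)$ producing $c$ via the transgression takes values in the group $\mu_r$ of $r$-th roots of unity. (This is the analogue of the observation that, to kill an order-$r$ class, one only needs an extension by a group whose exponent divides $r$; the Lyndon--Hochschild--Serre spectral sequence identifies $c$ with the image of $[\mu]\in H^2(N,\T)^G$ under the transgression $d_2$, and an order-$r$ class lifts to $H^2(N,\mu_r)^G$.) First I would make this reduction precise, keeping careful track of how $r$ controls the coefficients.

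Next, rather than inducing an outer action of $\tilde G$ from an action of the finite group $N$ on a matrix algebra as Jones does (which forces $M_{|G|^\infty}$, or $M_{|N|^\infty}$, to appear), I would build the $G$-kernel on $M_{r^\infty}$ directly. Since $N$ has exponent dividing $r$, and $M_{r^\infty}\cong M_{r^\infty}\otimes M_{r^\infty}$, one can realize the twisted action $(\beta,\mu)$ of $N$ with $\mu$ valued in $\mu_r$ inside $M_{r^\infty}$: the regular $\mu$-representation of $N$ lives in a matrix algebra of size dividing $r^{|N|}$, which embeds unitally into $M_{r^\infty}$ after tensoring with enough copies. The twisted crossed product $M_{r^\infty}\rtimes_{\beta,\mu}N$ is then again a UHF algebra of type $r^\infty$ (a twisted crossed product of $M_{r^\infty}$ by a finite abelian group of exponent dividing $r$, with $\beta$ chosen approximately inner on each finite stage, stays within the $r^\infty$ UHF class — this needs a small Bratteli-diagram or tensorial-splitting argument). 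The automorphisms implementing the $\tilde G/N=G$ symmetry of this crossed product descend, after modding out by inner automorphisms, to a $G$-kernel $\alpha$ on $M_{r^\infty}$, and Jones' computation (the content of \cite[Theorem 2.5]{J79}, reproduced in \cite[Section 3]{J21}) shows $\ob(\alpha)=c$.

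The main obstacle I anticipate is the second step: ensuring that the twisted crossed product $M_{r^\infty}\rtimes_{\beta,\mu}N$ is genuinely $M_{r^\infty}$ and not merely a UHF algebra with some extra prime factors, and that the outer $G$-action survives the quotient by inner automorphisms without picking up an extra obstruction. Concretely, one must choose $\beta$ so that it acts trivially (up to approximate innerness) on a cofinal sequence of matrix subalgebras whose sizes are powers of $r$, so that at each finite stage the crossed product is $M_{r^{k}}\otimes \C[N]_\mu$ with $|\C[N]_\mu|$ a power of $r$; divisibility of the supernatural number $r^\infty$ by $|N|$ (which holds precisely because $\exp(N)\mid r$) is what makes this possible. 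I would handle this by an explicit inductive-limit model: write $M_{r^\infty}=\bigotimes_{j=1}^\infty M_{r^\infty}$, let $N$ act by (twisted) translation on one fixed tensor leg via its $\mu$-regular representation absorbed into an $M_{r^\infty}$ factor, and let $\beta$ be the resulting inner-on-each-stage action; the crossed product is then manifestly $M_{r^\infty}$, and the $G$-kernel is read off from the transgression as before. Once this model is in place, the verification that $\ob(\alpha)=c$ is exactly Jones' cocycle computation and requires no new ideas.
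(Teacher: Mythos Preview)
This statement is presented in the paper as an open \emph{conjecture}, not a theorem; the paper gives no proof. Immediately after stating it the author observes that it holds for finite abelian $G$ (by reduction to prime components and Theorem~\ref{J}) but writes that ``it is not clear if the conjecture holds for general finite groups.'' There is therefore no paper proof to compare your attempt against.

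Your proposal has two genuine gaps. The decisive one is the claim that the extension $1\to N\to\tilde G\to G\to 1$ can be arranged with $N$ finite of exponent dividing $r$ and with the transgressing cocycle $\mu$ valued in $\mu_r$. Jones' lemma gives an abelian $N$ with no control on its exponent in terms of $r$; trading it for one of exponent dividing $r$ while keeping $p^*c=0$ is exactly the hard step, and your Lyndon--Hochschild--Serre remark does not establish it. That $c$ lifts to $H^3(G,\mu_r)$ does not imply that, for some $N$ of exponent dividing $r$, the $d_2$-differential from $H^2(N,\mu_r)^G$ hits your lift; this surjectivity is essentially what the conjecture asserts, and you have assumed it rather than proved it.

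The second gap is in the construction. You take $\beta|_N$ to be implemented by the $\mu$-projective regular representation on one tensor leg, hence globally inner. But then the elements $W_n:=V_n^*\lambda_n$ in $M_{r^\infty}\rtimes_{\beta,\mu}N$ commute with $M_{r^\infty}$ and satisfy $W_nW_m=W_{nm}$, so the crossed product is $M_{r^\infty}\otimes C^*(N)\cong M_{r^\infty}\otimes\C^{|N|}$, which is not simple and certainly not $M_{r^\infty}$. Jones' mechanism needs an \emph{outer} $\tilde G$-action on the base; an inner $\beta|_N$ destroys both the identification of the crossed product and the injectivity of the resulting map $G\to\Out$, which the paper requires of a $G$-kernel.
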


Conjecture \ref{C1} is true for every finite abelian group $G$ thanks to Theorem \ref{J} as it suffices to verify the conjecture 
for each prime component of $G$ (see \cite[Corollary 10.2, Theorem 10.3]{B82}). 
However, it is not clear if the conjecture holds for general finite groups. 

\begin{conjecture}\label{C2} Let $G$ be a countable discrete group with a reasonable finiteness condition for its cohomology, 
and let $\fP$ be a set of prime numbers. 
Then every class in $H^3(G,\tK_0(M_{\fP^\infty}))$ can be realized as $\ob_\tau(\alpha)$ of a $G$-kernel $\alpha:G\to \Out(M_{\fP^\infty})$.  
\end{conjecture}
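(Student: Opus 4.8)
The plan is to separate the statement into a formal reduction, which replaces the refined invariant $\ob_\tau$ by the ordinary obstruction $\ob$, and a genuinely hard construction problem for $G$-kernels on $M_{\fP^\infty}$.

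First I would carry out the reduction. Put $B=M_{\fP^\infty}$ and identify $\tau_*K_0(B)$ with $\Z[\fP^{-1}]\subset\Q$, so that $\tK_0(B)=\Z[\fP^{-1}]/\Z$ is divisible; by Remark~\ref{injective} the coefficient map $\iota_*\colon H^3(G,\tK_0(B))\to H^3(G,\T)$ is then injective. Since $B$ has a unique trace $\tau$, which is preserved by every $G$-kernel, and $U(B)$ is connected, each $G$-kernel $\alpha\colon G\to\Out(B)$ satisfies $s_*(\ob(\alpha))=0$, so $\ob_\tau(\alpha)$ is defined and $\iota_*(\ob_\tau(\alpha))=\ob(\alpha)$. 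Because $\iota_*$ is injective, a class $c\in H^3(G,\tK_0(B))$ is realized as $\ob_\tau(\alpha)$ if and only if its image $\iota_*(c)\in H^3(G,\T)$ is realized as $\ob(\alpha)$ for some $G$-kernel $\alpha$ on $B$; moreover every such $\ob(\alpha)$ lies in $\Ima\iota_*$. Hence Conjecture~\ref{C2} is equivalent to: every class in $\Ima\bigl(H^3(G,\Z[\fP^{-1}]/\Z)\to H^3(G,\T)\bigr)$, equivalently every class in $H^3(G,\T)$ killed by $H^3(G,\T)\to H^3(G,\R/\Z[\fP^{-1}])$, is realized in $M_{\fP^\infty}$.

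Second I would dispose of the classes of finite order. Under a reasonable finiteness hypothesis on $G$ (say $H_i(G,\Z)$ finitely generated for $i\le4$), $\tK_0(B)$ being torsion forces each $c\in H^3(G,\tK_0(B))$ to have finite order $m$, all of whose prime factors lie in $\fP$. Using $M_{\fP^\infty}\cong M_{m^\infty}\otimes M_{\fP^\infty}$ together with the observation that passing from a $G$-kernel $\alpha$ on $M_{m^\infty}$ to $\alpha\otimes\id$ on $M_{\fP^\infty}$ leaves the $\T$-valued cocycle $\omega$ unchanged (and hence, by the injectivity above, forces $\ob_\tau$ to land exactly on $c$), the task reduces to realizing the image $\bar c\in H^3(G,\T)$, of order $m$, by a $G$-kernel on $M_{m^\infty}$ — which is precisely the content of Conjecture~\ref{C1} for the class $\bar c$. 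Thus Conjecture~\ref{C2} follows from Conjecture~\ref{C1}; in particular it holds for every finite abelian $G$ (via the prime-power reduction and Jones' construction, Theorem~\ref{J}), and it holds for $G=\Z^n$ by the $\Z^n$-classification obtained in this paper. For general finite $G$ this step reduces to refining Jones' construction on $M_{|G|^\infty}$ from \cite{J79},\cite{J80} so as to track which prime-power components of the resulting cocycle are actually needed.

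The main obstacle is an infinite group $G$ beyond $\Z^n$. The Jones–Kirchberg route behind Theorem~\ref{JK} is of no use here: it realizes obstruction classes inside a twisted crossed product $\cO_\infty\rtimes_{\beta,\mu}N$, which is a Kirchberg algebra rather than a UHF algebra, so it produces nothing inside $M_{\fP^\infty}$. What is needed is a construction that stays within the UHF world while still accommodating the prescribed torsion class, and for $G=\Z^n$ this is supplied by continuous-field methods: realize $c$ as a continuous field of strongly self-absorbing C$^*$-algebras over a CW model of $B\Z^n$ with fibre $M_{\fP^\infty}\otimes\K$, and read off the $\Z^n$-kernel from its Dadarlat–Pennig classifying map. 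Equivalently, one decomposes $c\in\wedge^3\Z^n\otimes\Z[\fP^{-1}]/\Z$ into elementary summands, realizes each by an explicit $\Z^3$-cocycle on a copy of $M_{m^\infty}$ in the spirit of Connes' $\Z_m$-kernel on $M_{m^\infty}$, and assembles them inside $M_{\fP^\infty}=\bigotimes M_{m^\infty}$, checking additivity of $\ob_\tau$ under this tensor product. Making such a construction work uniformly for all sets $\fP$ and for general infinite $G$ satisfying the finiteness hypothesis — above all, proving additivity of the invariant under the relevant gluings and showing that every class, not just the ``decomposable'' ones, is hit — is the crux of the conjecture and the point at which the present methods stop short of the general case.
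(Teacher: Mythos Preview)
The statement is a conjecture; the paper does not prove it in full generality. What the paper actually provides is (i) the reduction to Conjecture~\ref{C1} under the finiteness hypothesis $FP_3$, and (ii) a proof of the case $G=\Z^n$ (the theorem immediately following the conjecture). Your first two paragraphs reproduce reduction (i) correctly and by essentially the same mechanism: divisibility of $\tK_0(M_{\fP^\infty})$ gives injectivity of $H^3(G,\tK_0(M_{\fP^\infty}))\to H^3(G,\T)$, any class has finite $\fP$-torsion order, and one lands on realizing a finite-order class in $H^3(G,\T)$ on $M_{m^\infty}$, i.e.\ on Conjecture~\ref{C1}. Your honest acknowledgement that the general case remains open matches the paper.

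Where you diverge is in the one case the paper actually settles, $G=\Z^n$. The paper's argument is shorter and different from either route you sketch: after reducing to a single prime $p$ and a class $c\in H^3(\Z^n,\Z_{p^m})$, it uses the K\"unneth formula to show that $c$ lies in the image of $q^*$ for the finite quotient $q\colon\Z^n\to(\Z_{p^m})^n$, invokes Jones' construction (Theorem~\ref{J}) to realize the lift as a $(\Z_{p^m})^n$-kernel on $M_{p^\infty}$, and then composes with $q$ and tensors with an outer $\Z^n$-action to obtain the desired $\Z^n$-kernel. Your continuous-field suggestion is misdirected here: the identification $\cE_A(G)\cong\bar E_A^1(BG)$ in Section~4 rests on the Gabe--Szab\'o and Meyer theorems, which are Kirchberg-algebra results and are not available for the stably finite algebra $M_{\fP^\infty}$ (cf.\ Remark~\ref{sf} and the final comment in Section~5). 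Your alternative, decomposing $c\in\wedge^3\Z^n\otimes\Z[\fP^{-1}]/\Z$ into elementary summands and hand-building each on a copy of $M_{m^\infty}$, could in principle be carried out, but the paper's finite-quotient trick sidesteps entirely the additivity issue you flag as the crux.
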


In view of  
$$\tK_0(M_{\fP^\infty})\cong \bigoplus_{p\in \fP}\Z[1/p]/\Z=\bigoplus_{p\in \fP}\varinjlim_m\Z_{p^m},$$
it is reasonable to tackle  Conjecture \ref{C2} assuming a finiteness condition on $G$ such as $FP_3$ 
(see \cite[p.193]{B82} for the definition). 
Under this assumption, we have  
$$H^3(G,\tK_0(M_{\fP^\infty}))=\bigoplus_{p\in \fP}\varinjlim_{m} H^3(G,\Z_{p^m}),$$
(see \cite[Chapter VIII, (Proposition 4.6]{B82}), and Conjecture \ref{C2} is reduced to the case of $M_{p^\infty}$, 
and is further reduced to Conjecture \ref{C1} as the subtle difference between $\ob(\alpha)$ and $\ob_\tau(\alpha)$ 
can be ignored in this case (see Remark \ref{injective} above). 
Indeed, this reduction argument works for $\Z^n$. 

\begin{theorem} Conjecture \ref{C2} is true for $G=\Z^n$. 
\end{theorem}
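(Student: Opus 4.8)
The plan is to reduce everything to Conjecture \ref{C1} via the structural remarks already assembled in this section. Recall that $\tK_0(M_{\fP^\infty})\cong\bigoplus_{p\in\fP}\Z[1/p]/\Z$ is a divisible torsion group, so by Remark \ref{injective} the natural map $H^3(\Z^n,\tau_*K_0(M_{\fP^\infty})/\Z)\to H^3(\Z^n,\T)$ is injective; thus for $G=\Z^n$ there is no distinction to worry about between $\ob(\alpha)$ and $\ob_\tau(\alpha)$, and it suffices to produce, for each class in $H^3(\Z^n,\tK_0(M_{\fP^\infty}))$, a $G$-kernel whose ordinary obstruction $\ob(\alpha)$ equals its image in $H^3(\Z^n,\T)$.

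Next I would use the two displayed decompositions: since $H_k(\Z^n,\Z)$ is free of finite rank, $\Z^n$ satisfies $FP_3$, so
\begin{equation*}
H^3(\Z^n,\tK_0(M_{\fP^\infty}))=\bigoplus_{p\in\fP}\varinjlim_m H^3(\Z^n,\Z_{p^m}).
\end{equation*}
A given element of the left-hand side is supported on finitely many primes $p_1,\dots,p_\ell\in\fP$ and at each $p_j$ is represented by a class $c_j\in H^3(\Z^n,\Z_{p_j^{m_j}})$ for some $m_j$. Pushing $c_j$ along $\Z_{p_j^{m_j}}\hookrightarrow\T$ gives a class $\bar c_j\in H^3(\Z^n,\T)$ of order dividing $p_j^{m_j}$, and the original class is the image of $\bar c_1+\cdots+\bar c_\ell\in H^3(\Z^n,\T)$. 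Now apply Conjecture \ref{C1} (which, as noted, is known for finite abelian groups and in particular is available here only if we can invoke it — but for $\Z^n$ one instead uses that the relevant finite quotients are abelian): each $\bar c_j$ of finite order $r_j\mid p_j^{m_j}$ is realized as $\ob(\alpha_j)$ of a $G$-kernel $\alpha_j:\Z^n\to\Out(M_{r_j^\infty})$. Concretely $\bar c_j$ factors through a finite abelian quotient $\Z^n\to F_j$, where on $H^3(F_j,\T)$ Theorem \ref{J} combined with the prime-component splitting $[\text{B82, Cor.\ 10.2, Thm.\ 10.3}]$ produces a $G$-kernel of $M_{|F_j|^\infty}\cong M_{p_j^{m_j\,\prime}}$ realizing it, which we may absorb into $M_{p_j^\infty}$.

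Finally I would assemble the pieces by a tensor-product construction. Form $A=\bigotimes_{j=1}^{\ell}M_{p_j^\infty}\otimes\bigl(\bigotimes_{p\in\fP\setminus\{p_1,\dots,p_\ell\}}M_{p^\infty}\bigr)\cong M_{\fP^\infty}$, using that $M_{\fP^\infty}$ absorbs any $M_{p^\infty}$ with $p\in\fP$ tensorially and any finite matrix algebra. On the $j$-th tensor factor put a lifting $(\talpha_j,u_j)$ of the $G$-kernel realizing $\bar c_j$, and on the remaining factors put the trivial action. The product lifting $\talpha_g=\bigotimes_j(\talpha_j)_g\otimes\mathrm{id}$, $u(g,h)=\bigotimes_j u_j(g,h)\otimes 1$ has $\ob=\sum_j\bar c_j$ (the associator cocycles add under tensor product), and since the trivial-factor automorphisms are inner only trivially while $\alpha$ is still injective, $\alpha:\Z^n\to\Out(M_{\fP^\infty})$ is an honest $G$-kernel with $\ob(\alpha)=\bar c_1+\cdots+\bar c_\ell$; unwinding through the injection of Remark \ref{injective} identifies $\ob_\tau(\alpha)$ with the target class, and the proof is complete. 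The main obstacle is the bookkeeping in showing that the product lifting's associator is the sum of the individual ones and that injectivity of $\alpha$ survives tensoring with trivial actions; the rest is an exercise in chasing the universal-coefficient and Künneth identifications that are already quoted in the section.
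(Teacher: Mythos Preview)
Your overall strategy matches the paper's: reduce to a single prime via the direct-sum decomposition, pass to $H^3(\Z^n,\T)$ using Remark \ref{injective}, factor each torsion class through a finite abelian quotient of $\Z^n$, invoke Jones' construction (Theorem \ref{J}) there, and then assemble. However, there are two genuine gaps.

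First, the step ``$\bar c_j$ factors through a finite abelian quotient $\Z^n\to F_j$'' is asserted, not proved. This is precisely the content of the paper's K\"unneth computation: one must check that for $C=\Z_{p^m}$ the map $q_*:H_3(\Z^n)\to H_3(C^n)$ has kernel exactly $p^mH_3(\Z^n)$, so that any homomorphism $H_3(\Z^n)\to C$ factors through $q_*$ and hence (via UCT, using that $H_*(\Z^n)$ is free) the cohomology class lies in the image of $q^*$. You gesture at this with ``the rest is an exercise in chasing the universal-coefficient and K\"unneth identifications'', but this is the actual work in the proof, not bookkeeping.

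Second, and more seriously, your injectivity claim fails. Each of your $\alpha_j$ factors through a finite quotient $F_j$ of $\Z^n$, so the tensor product $\bigotimes_j\alpha_j\otimes\id$ factors through the finite group $\prod_j F_j$, and the resulting map $\Z^n\to\Out(M_{\fP^\infty})$ has a finite-index kernel. Tensoring with the \emph{trivial} action on the remaining factors cannot repair this: the trivial action contributes nothing to outerness. The paper fixes exactly this issue by tensoring with an \emph{outer} $\Z^n$-action on $M_{p^\infty}$ (which exists and does not change $\ob$), thereby forcing injectivity of the resulting $G$-kernel. You should replace your trivial-action factors by such an outer action.
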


\begin{proof} As stated above, it suffices to verify the statement for the UHF algebra $M_{p^\infty}$ for each prime $p$. 
Furthermore, it suffices to show that every class in ${\iota_m}_*H^3(\Z^n,\Z_{p^m})$ is realized in $M_{p^\infty}$, 
where $\iota_m:\Z_{p^m}\to \Z[1/p]/\Z$ is the inclusion map. 
We write $H_n(G)=H_n(G,\Z)$ and $C=\Z_{p^m}$ for simplicity. 

Let $c\in H^3(\Z^n,C)$. 
Since $H_*(\Z^n)$ is a finitely generated free abelian group, we can identify $c$ as an element in  
$\Hom(H_3(\Z^n),C)$ by the universal coefficient theorem.   
Let $q:\Z^n\to C^n$ be the quotient map. 
We show that $c$ is in the image of $q^*:H^3(C^n,C)\to H^3(\Z^n,C)$.  

The K\"{u}nneth formula implies that  
$$H_3(\Z^n)=\bigoplus_{i_1+i_2+\cdots+i_n=3,\ i_j=0,1}H_{i_1}(\Z)\otimes H_{i_2}(\Z)\otimes \cdots\otimes H_{i_n}(\Z) \cong 
\Z^{n(n-1)(n-2))/6},$$
and also that 
$$L=\bigoplus_{i_1+i_2+\cdots+i_n=3}H_{i_1}(C)\otimes H_{i_2}(C)\otimes \cdots\otimes H_{i_n}(C)$$
is a direct summand of $H_3(C^n)$. 
Note that we have $H_0(C)=\Z$ and $H_1(C)=C$.  
This shows that the image of $q_*:H_3(\Z^n)\to H_3(C^n)$ is a direct summand of $L$ isomorphic to $C^{n(n-1)(n-2)/6}$, 
and $\ker q_*=p^mH_3(\Z^n)$.  
Since the kernel of $c$ as an element in $\Hom(H_3(\Z^n),C)$ includes $p^mH_3(\Z^n)$,  
there exists $c'\in \Hom(H_3(C^n),C)$ satisfying $c=c'\circ q_*$. 
Now the universal coefficient theorem shows that there exists $c_1\in H^3(C^n,C)$ satisfying $q^*(c_1)=c$. 

Let $\iota:\Z[1/p]/\Z\to \T$ be the inclusion map. 
Thanks to Remark \ref{injective}, it suffices to show the realization of $(\iota\circ \iota_m)_*c$ as $\ob(\alpha)$ 
instead of ${\iota_m}_*c$ as $\ob_\tau(\alpha)$. 
Applying Theorem \ref{J} to $C^n$ and $(\iota\circ \iota_m)_*c_1\in H^3(C^n,\T)$, we see that $(\iota\circ \iota_m)_*c_1$ is realized as $\ob(\alpha)$ 
of a $C^n$-kernel $\alpha:C^n\to \Out(M_{p^\infty})$. 
Letting $\beta$ be the composition of $q$ and $\alpha$, and taking tensor product of $\beta$ and an outer action of $\Z^n$ on $M_{p^\infty}$, 
we get a desired $\Z^n$-kernel. 
\end{proof}
\section{A new invariant} 
In this section, we assume that $U(A)$ is connected. 
Note that we have a group homomorphism from $\pi_1(U(A))$ to $K_1(SA)=K_0(A)$ by Bott periodicity.  
For simplicity, we assume that $\pi_1(U(A))\cong K_0(A)$, which is the case, for example, if $A$ is Jiang-Su absorbing 
(see \cite[Theorem 3]{Ji97}, \cite[Theorem 4.8]{CGSTW}). 

\subsection{Invariant $\kappa^3(\alpha,u)$ for a cocycle action $(\alpha,u)$.} 
Our definition of a new invariant $\tob(\alpha)$ for a $G$-kernel $\alpha$ is a modification of that of an invariant 
$$\kappa^3(\alpha,u)\in H^3(G,K_0(A))$$
for a cocycle action $(\alpha,u)$ introduced in \cite[Section 8.1]{IMI}. 
Since we need it in the next section, we recall its definition here before introducing $\tob(\alpha)$. 

A cocycle action $(\alpha,u)$ of $G$ on $A$ is a pair of a map $\alpha:G\to \Aut(A)$ and a family of unitaries 
$u(g,h)\in U(A)$ satisfying 
$$\alpha_g\circ \alpha_h=\Ad u(g,h)\circ \alpha_{gh},$$
$$\alpha_g(u(h,k))u(g,hk)=u(g,h)u(gh,k).$$
A cocycle action $(\alpha,u)$ is outer if $\alpha_g$ is outer for every $g\in G\setminus \{1\}$.

We say that two cocycle actions $(\alpha,u)$ and $(\beta,v)$ on $A$ are equivalent if there exist $w_g\in U(A)$ 
satisfying $\beta_g=\Ad w_g\circ \alpha_g$, and 
$$v(g,h)=w_g\alpha_g(w_h)u(g,h)w_{gh}^*.$$ 
Up to equivalence, we may and do assume that $\alpha_e=\id$ and $u(g,e)=u(e,g)=1_A$. 
If there exists no non-trivial order 2 element in $G$, we may further normalize $(\alpha,u)$ so that 
$\alpha_{g^{-1}}=\alpha_g^{-1}$ and $u(g,g^{-1})=1$ hold.

\begin{definition}\label{kappa} For a cocycle $G$-action $(\alpha,u)$ on $A$, 
we choose a continuous path $\{\tu_{g,h}(t)\}_{t\in [0,1]}$ in $U(A)$ from 1 to $u_{g,h}$ 
for each pair $g,h$. 
Then 
$$\partial \tu(g,h,k)(t):=\alpha_g(\tu(h,k)(t))\tu(g,hk)(t)\tu(gh,k)(t)^{-1}\tu(g,h)(t)^{-1}$$
is a based loop in $U(A)$, and $[\partial \tu_{g,h,k}]_0\in K_0(A)$ form a 3-cocycle in $Z^3(G,K_0(A))$. 
We define $\kappa^3(\alpha,u)$ to be its cohomology class in $H^3(G,K_0(A))$. 
The class $\kappa^3(\alpha,u)$ does not depend on the choices of the paths.  
\end{definition}

If $(\alpha,u)$ and $(\beta,v)$ are equivalent, we have $\kappa^3(\alpha,u)=\kappa^3(\beta,v)$. 

We say that two cocycle actions $(\alpha,u)$ on $A$ and $(\beta,v)$ on $B$ are cocycle conjugate if 
there exists an isomorphism $\theta:A\to B$ such that $(\theta\circ\alpha\circ\theta^{-1},\theta(u))$ 
and $(\beta,v)$ are equivalent. 
Under this relation, we have 
$\theta_*\kappa^3(\alpha,u)=\kappa^3(\beta,v)$. 

\subsection{Invariant $\tob(\alpha)$ for a $G$-kernel $\alpha$.}
Now we introduce a module $K^\#_0(A)$ and an invariant $\tob(\alpha)\in H^3(G,K^\#_0(A))$ for a $G$-kernel $\alpha$. 
We state some of their basic properties without giving detailed proofs, and the reader is referred to 
Gir\'{o}n Pacheco's thesis \cite[Section 5]{GP23} for details. 

We set 
$$K^\#_0(A)=\{f\in U(C([0,1],A));\; f(0)=1,\; f(1)\in \T\}/U(C_0((0,1),A))_0,$$
where $U(C_0((0,1),A))_0$ is the connected component of 1 in  
$$\{f\in U(C([0,1],A));\; f(0)=f(1)=1\}.$$ 
Then $K^\#_0(A)$ is an abelian group. 
Let $\ev_1$ be the evaluation map at 1, and let 
$$j_A:K_0(A)=\pi_1(U(A))\to K_0^\#(A)$$
be the inclusion map. 
Then we have a short exact sequence:
\begin{equation}\label{exse}
0\to K_0(A)\xrightarrow{j_A}  K_0^\#(A)\xrightarrow{\ev_1} \T\to 0.
\end{equation}

For $r\in \R$, let $e_r(t)=e^{2\pi i rt}$. 
For two modules $M_1$ and $M_2$, we denote by $\pr_i$ for $i=1,2$ the projection from $M_1\times M_2$ 
onto the $i$-the component. 
We abuse the notation and use the same symbole for the map 
$$(M_1\times M_2)/\langle(m_1,m_2)\rangle\to M_i/\langle m_i\rangle $$
induced by $\pr_i$. 

The following presentation of $K^\#_0(A)$ is useful to identify $K^\#_0(A)$ in many concrete examples. 

\begin{lemma}\label{presentation}
\begin{itemize}
\item[$(1)$]
There exists a unique isomorphism 
$$\varphi_A:K^\#_0(A)\to (K_0(A)\times \R)/\Z([1]_0,-1),$$ 
satisfying 
$$\varphi_A\circ j_A(x)=[(x,0)],$$ 
$$\varphi_A([e_r])=[(0,r)],$$
for all $x\in K_0(A)$ and $r\in \R$. 
We also have $\mathrm{ev}_1=\pr_2\circ \varphi_A$. 
\item[$(2)$] Assume that there exists $\rho\in\Hom(K_0(A),\R)$ satisfying $\rho([1]_0)=1$. 
Then there exists a unique isomorphism $\psi_{A,\rho}:K_0^\#(A)\to \tK_0(A) \times \R$ 
satisfying 
$$\psi_{A,\rho}\circ j_A(x)=([x],\rho(x)),$$
$$\psi_{A,\rho}([e_r])=(0,r),$$
for all $x\in K_0(A)$ and $r\in \R$.
Moreover, we have 
$$\ev_1\circ \psi_{A,\rho}^{-1}([x],y)=e^{2\pi i(-\rho(x)+y)}.$$
\end{itemize}
\end{lemma}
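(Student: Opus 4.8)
The plan is to construct the isomorphisms explicitly from the short exact sequence \eqref{exse} and check everything on generators.

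For part (1), I would first define a homomorphism $\Psi_A \colon K^\#_0(A) \to (K_0(A)\times \R)/\Z([1]_0,-1)$ directly on representatives. Given $f \in U(C([0,1],A))$ with $f(0)=1$ and $f(1)=e^{2\pi i r} \in \T$ for some $r\in\R$, I would form the loop $g(t) = f(t)e_{-r}(t)$, which satisfies $g(0)=g(1)=1$, hence defines a class $[g]_0 \in \pi_1(U(A)) = K_0(A)$, and set $\Psi_A([f]) = [([g]_0, r)]$. The first thing to verify is well-definedness: changing $r$ by an integer $n$ changes $[g]_0$ by $n[e_1]_0 = n[1]_0$ (since the loop $e_1(t)=e^{2\pi i t}1_A$ represents $[1]_0$ by the standard identification of $\pi_1(U(A))$ with $K_0(A)$ via Bott periodicity), so the pair changes by $n([1]_0,-1)$ — wait, I need $([g]_0,r) \mapsto ([g]_0 + n[1]_0, r-n)$, which is subtracting $n([1]_0,-1)$, so indeed the class in the quotient is unchanged; and modifying $f$ within $U(C_0((0,1),A))_0$ does not change $[g]_0$. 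Homomorphism property follows from pointwise multiplication of paths. Then I would check $\Psi_A \circ j_A(x) = [(x,0)]$ (taking $r=0$ and a loop representing $x$) and $\Psi_A([e_r]) = [(0,r)]$ (here $g \equiv 1$). Surjectivity is clear from these two computations since the image contains all $[(x,0)]$ and $[(0,r)]$. For injectivity, if $\Psi_A([f]) = 0$ then $r \in \Z$ and $[g]_0 = -r[1]_0$ in $K_0(A)$, from which one deduces $f$ lies in $U(C_0((0,1),A))_0$ after multiplying by a suitable loop $e_{-r}$ and a null-homotopic loop; so $\Psi_A$ is the desired $\varphi_A$, and uniqueness is forced by the normalization on the generators $j_A(x)$ and $[e_r]$, which together generate $K^\#_0(A)$ by the short exact sequence \eqref{exse}. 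Finally $\ev_1 \circ \varphi_A^{-1}[(x,y)] = e^{2\pi i y} = \pr_2$ composed with the identification $\R/\Z \cong \T$, matching $\ev_1 = \pr_2 \circ \varphi_A$.

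For part (2), given $\rho \in \Hom(K_0(A),\R)$ with $\rho([1]_0)=1$, the map $(K_0(A)\times\R)/\Z([1]_0,-1) \to \tK_0(A)\times\R$ sending $[(x,y)] \mapsto ([x], -\rho(x)+y)$ is well-defined because $([1]_0,-1)$ maps to $([1]_0 \bmod \Z[1]_0, -\rho([1]_0)-1+1) = (0, -1)$ — hmm, that's not zero. Let me instead use $[(x,y)]\mapsto([x],\rho(x)+y)$... then $([1]_0,-1)\mapsto(0,\rho([1]_0)-1)=(0,0)$, good. This is clearly an isomorphism with inverse $([x],z) \mapsto [(\hat x, z - \rho(\hat x))]$ for any lift $\hat x$ of $[x]$ — well-definedness of the inverse uses that changing the lift by a multiple of $[1]_0$ changes the representative by a multiple of $([1]_0,-1)$... one checks $(\,[1]_0, -\rho([1]_0)\,) = ([1]_0,-1)$. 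Then I would define $\psi_{A,\rho}$ as the composite of this isomorphism with $\varphi_A$. On generators: $\psi_{A,\rho}\circ j_A(x)$ comes from $[(x,0)]$, mapping to $([x],\rho(x))$; and $\psi_{A,\rho}([e_r])$ comes from $[(0,r)]$, mapping to $(0, r)$. Both match the claimed normalizations, and uniqueness again follows since these generate. The formula for $\ev_1$: we have $\ev_1 = \pr_2\circ\varphi_A$, and $\varphi_A = (\text{iso})^{-1}\circ\psi_{A,\rho}$, so $\ev_1\circ\psi_{A,\rho}^{-1}([x],y) = \pr_2$ of $[(\hat x, y - \rho(\hat x))]$ viewed in $(K_0(A)\times\R)/\Z([1]_0,-1)$, which under $\ev_1 = \pr_2$ (mod $\Z$, i.e. landing in $\T$) gives $e^{2\pi i(y-\rho(\hat x))} = e^{2\pi i(-\rho(x)+y)}$, well-defined since $\rho(\hat x)$ is determined mod $\Z$.

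The routine parts are the diagram chases and generator checks; the only genuinely delicate point is the identification $[e_1]_0 = [1]_0$ in $\pi_1(U(A)) \cong K_0(A)$, i.e.\ pinning down the precise sign and normalization conventions in the Bott periodicity isomorphism $\pi_1(U(A)) \to K_1(SA) = K_0(A)$ so that the relation $\Z([1]_0,-1)$ (rather than, say, $\Z([1]_0,+1)$) comes out correctly; I would settle this once at the start and propagate it. Everything else is bookkeeping with the short exact sequence \eqref{exse}, whose exactness guarantees that $j_A(K_0(A))$ together with any lifts of the generators of $\T$ (here the $[e_r]$) generate $K^\#_0(A)$, which is what makes the uniqueness clauses immediate.
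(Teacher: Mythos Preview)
Your proof is correct and arrives at the same isomorphisms as the paper, but you build them from the opposite direction. The paper argues more economically for part (1): it observes that the two homomorphisms $j_A:K_0(A)\to K_0^\#(A)$ and $\R\ni r\mapsto[e_r]\in K_0^\#(A)$ together define a surjection $K_0(A)\times\R\to K_0^\#(A)$ (surjectivity being immediate from the exact sequence \eqref{exse}, since any $[f]$ decomposes as $j_A([fe_{-r}])+[e_r]$), and then simply identifies the kernel as $\Z([1]_0,-1)$; the map $\varphi_A$ is then the inverse of the induced isomorphism. This sidesteps your explicit well-definedness and injectivity checks, though of course your computation that $([g]_0,r)$ changes by a multiple of $([1]_0,-1)$ under the choice of $r$ is exactly dual to the paper's kernel computation. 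For part (2) your argument is identical to the paper's: both define $\psi_{A,\rho}$ by composing $\varphi_A$ with the isomorphism $[(x,r)]\mapsto([x],\rho(x)+r)$.

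Your remark about the delicate point --- fixing the sign convention so that the loop $e_1$ really represents $[1]_0$ rather than $-[1]_0$ --- is well taken; the paper tacitly assumes this normalization throughout and does not comment on it.
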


\begin{proof} (1) The two maps $j_A:K_0(A)\to K_0^\#(A)$ and $\R\ni r\mapsto [e_r]\in K_0^\#(A)$ induce 
a surjective homomorphism from $K_0(A)\times \R$ onto $K_0^\#(A)$. 
Since its kernel is $\Z([1]_0,-1)$, we get the statement. 

(2) The statement follows from the fact that there exists an isomorphism 
$$(K_0(A)\times \R)/\Z([1]_0,-1)\ni [(x,r)]\mapsto ([x],\rho(x)+r)\in \tK_0(A)\times \R.$$
\end{proof}

\begin{definition}\label{deftob} For a $G$-kernel $\alpha:G\to \Out(A)$ with a lifting $(\talpha,u)$, 
we choose a continuous path $\{\tu(g,h)(t)\}_{t\in [0,1]}$ in $U(A)$ from 1 to $u(g,h)$ for each pair $g,h\in G$. 
Then 
$$\widetilde{\omega}(g,h,k)(t):=\talpha_g(\tu(h,k)(t))\tu(g,hk)(t)\tu(gh,k)(t)^{-1}\tu(g,h)(t)^{-1}$$
is a continuous path from 1 to $\omega(g,h,k)\in \T$ in $U(A)$, where $\omega$ is as in Eq.(\ref{obs}), 
and $[\widetilde{\omega}(g,h,k)]\in K^\#_0(A)$ form a 3-coycle. 
We define $\tob(\alpha)$ to be its cohomology class in $H^3(G,K^\#_0(A))$. 
The class $\tob(\alpha)$ does not depend on any choices made for its definition, and depends only on $\alpha$.  

We define the reduced version of $\tob(\alpha)$ by 
$$\tob^r(\alpha)=-(\pr_1\circ \varphi_A)_*\tob(\alpha)\in H^3(G,\tK_0(A)).$$ 
When $\rho$ as in Lemma \ref{presentation} is available, we have $\tob^r(\alpha)=-(\pr_1\circ \psi_{A,\rho})_*\tob(\alpha)$ too. 
\end{definition}

By construction, we have ${\ev_1}_*(\tob(\alpha))=\ob(\alpha)$, and $\tob(\alpha)$ has more information than $\ob(\alpha)$. 
When $\ob(\alpha)=0$, we can adjust $(\talpha,u)$ so that it gives a cocycle action of $G$ on $A$. 
In this case, we have ${j_A}_*(\kappa^3(\talpha,u))=\tob(\alpha)$ , which is compatible with the cohomology long exact 
sequence arising from Eq.(\ref{exse}). 

Applying Lemma \ref{presentation},(1) to the finite Cuntz algebra $A=\cO_{n+1}$, we obtain a commutative diagram 
$$
\begin{tikzcd}
K_0^\#(\cO_{n+1}) \arrow[r,"\ev_1"] \arrow[d,"\simeq"]&\T \arrow[d,equal]\\
\T\arrow[r,"n\times "]&\T
\end{tikzcd},
$$
which immediately implies 

\begin{theorem}\label{finite Cuntz} 
For every $G$-kernel $\alpha:G\to \Out(\cO_{n+1})$, its obstruction $\ob(\alpha)$ belongs to 
$nH^3(G,\T)$. 
In particular, for every prime number $p$, every $\Z_p$-kernel $\alpha:\Z_p\to \Out(\cO_{np+1})$ has trivial obstruction. 
\end{theorem}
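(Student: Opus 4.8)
The plan is to deduce the theorem from the commutative diagram displayed just above, together with the identity ${\ev_1}_*(\tob(\alpha))=\ob(\alpha)$, which holds by the construction of $\tob$. Since $\tob(\alpha)$ lives in $H^3(G,K_0^\#(\cO_{n+1}))$, I want to push it forward along $\ev_1$ and use that, after the identification $K_0^\#(\cO_{n+1})\cong\T$, this pushforward is multiplication by $n$ on coefficients.

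First I would make the left vertical isomorphism of the diagram explicit. Since $K_0(\cO_{n+1})\cong\Z/n\Z$ with $[1]_0$ a generator, Lemma~\ref{presentation}(1) gives $\varphi_{\cO_{n+1}}\colon K_0^\#(\cO_{n+1})\xrightarrow{\sim}(\Z/n\Z\times\R)/\Z([1]_0,-1)$; in that quotient $([1]_0,0)$ and $(0,1)$ agree, so $s\mapsto[(0,s)]$ induces an isomorphism $\R/n\Z\xrightarrow{\sim}(\Z/n\Z\times\R)/\Z([1]_0,-1)$, under which $\pr_2$ becomes the natural surjection $\R/n\Z\to\R/\Z$. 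Identifying both sides with $\T$ in the standard way, $\ev_1=\pr_2\circ\varphi_{\cO_{n+1}}$ is thus the $n$-th power map $z\mapsto z^n$, i.e.\ multiplication by $n$ on $\T\cong\R/\Z$. This is precisely the commutative diagram in the text.

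Applying the functor $H^3(G,-)$, the class $\ob(\alpha)={\ev_1}_*(\tob(\alpha))$ then lies in the image of the endomorphism of $H^3(G,\T)$ induced by multiplication by $n$ on the coefficient module, which is again multiplication by $n$; hence $\ob(\alpha)\in nH^3(G,\T)$, which is the first assertion. For the second, I apply this with $n$ replaced by $np$ to obtain $\ob(\alpha)\in np\,H^3(\Z_p,\T)$ for any $\Z_p$-kernel $\alpha\colon\Z_p\to\Out(\cO_{np+1})$, and then invoke the standard computation $H^3(\Z_p,\T)\cong\Z/p\Z$ (from the exponential sequence $0\to\Z\to\R\to\T\to 0$ and the cohomological triviality of $\R$ for finite groups, which gives $H^3(\Z_p,\T)\cong H^4(\Z_p,\Z)\cong\Z/p\Z$), on which multiplication by $p$, a fortiori by $np$, is zero. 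Therefore $\ob(\alpha)=0$.

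I do not expect a genuine obstacle here: the argument reduces to bookkeeping for the extension $0\to K_0(\cO_{n+1})\to K_0^\#(\cO_{n+1})\to\T\to 0$ and to elementary group cohomology of $\Z_p$. The only point deserving a word of care is that all maps in play ($j_{\cO_{n+1}}$, $\ev_1$, the isomorphism of Lemma~\ref{presentation}(1), and the $n$-th power map) are equivariant for the $\Z G$-module structures induced by $\alpha$, so that ${\ev_1}_*$ and the coefficient-change map are defined; this is automatic, since $\alpha$ fixes the centre pointwise and acts trivially on $K_0(\cO_{n+1})=\Z/n\Z$ (any automorphism of $\Z/n\Z$ fixing the generator $[1]_0$ is the identity).
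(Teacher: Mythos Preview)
Your argument is correct and follows exactly the paper's approach: the theorem is an immediate consequence of the commutative diagram obtained from Lemma~\ref{presentation}(1) together with ${\ev_1}_*(\tob(\alpha))=\ob(\alpha)$, and you have simply made explicit the identification $K_0^\#(\cO_{n+1})\cong\R/n\Z\cong\T$ and the resulting fact that $\ev_1$ becomes multiplication by $n$. Your additional remarks on the second assertion and on equivariance are correct and merely spell out what the paper leaves implicit.
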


If $p$ does not divide $n\in \N$, we have $\cO_{n+1}\cong \cO_{n+1}\otimes M_{p^\infty}$, 
and every class in $H^3(\Z_p,\T)\cong \Z_p$ is realized in $\cO_{n+1}$ by Connes' construction. 
The realization problem in the prime power order case, e.g. $A=\cO_3$ and $G=\Z_4$, is much subtler, and nothing is known about it 
to the best of the author's knowledge.  
More generally, the following problem is very fundamental. 

\begin{problem} Decide the range of $\tob$ for general $\Z_m$ and $\cO_{n+1}$. 
\end{problem}

Applying Lemma \ref{presentation},(2) to the infinite Cuntz algebra $A=\cO_\infty$, we obtain a commutative diagram 
$$
\begin{tikzcd}
K_0^\#(\cO_\infty) \arrow[r,"\ev_1"] \arrow[d,"\simeq"]&\T \arrow[d,equal]\\
\R\arrow[r,"q_{\R\to \T}"]&\T
\end{tikzcd},
$$
which implies 

\begin{theorem}\label{infinite Cuntz} For every countable discrete group $G$ and every $G$-kernel $\alpha:G\to \Out(\cO_\infty)$, 
its obstruction belongs to ${q_{\R\to \T}}_*H^3(G,\R)$. 
In particular, it is trivial for any finite $G$. 
\end{theorem}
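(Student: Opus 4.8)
The plan is to push the commutative square displayed just above the statement through the functor $H^3(G,-)$, using that $\cO_\infty$ has $K_0=\Z$.

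First I would record the identification of $K_0^\#(\cO_\infty)$. Since $K_0(\cO_\infty)=\Z[1]_0$, the embedding $\rho\colon K_0(\cO_\infty)\hookrightarrow\R$ normalized by $\rho([1]_0)=1$ is available, so Lemma~\ref{presentation}(2) applies. As $\tK_0(\cO_\infty)=K_0(\cO_\infty)/\Z[1]_0=0$, the isomorphism $\psi_{\cO_\infty,\rho}$ of that lemma is an isomorphism $K_0^\#(\cO_\infty)\xrightarrow{\simeq}\R$, and the formula $\ev_1\circ\psi_{\cO_\infty,\rho}^{-1}([x],y)=e^{2\pi i(-\rho(x)+y)}$, in which necessarily $x=0$, shows that under this identification $\ev_1$ becomes precisely $q_{\R\to\T}$. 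This is exactly the displayed square, so nothing new is needed at this step.

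Next I would apply $H^3(G,-)$ and invoke naturality of group cohomology in the coefficient module: the square induces a commutative square of abelian groups whose left vertical arrow $H^3(G,K_0^\#(\cO_\infty))\to H^3(G,\R)$ is an isomorphism and whose bottom arrow is ${q_{\R\to\T}}_*$. The invariant $\tob(\alpha)$ lies in $H^3(G,K_0^\#(\cO_\infty))$ and satisfies ${\ev_1}_*\tob(\alpha)=\ob(\alpha)$ by construction (Definition~\ref{deftob}); chasing $\tob(\alpha)$ around the square yields $\ob(\alpha)={q_{\R\to\T}}_*(c)$, where $c\in H^3(G,\R)$ is the image of $\tob(\alpha)$. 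Hence $\ob(\alpha)\in{q_{\R\to\T}}_*H^3(G,\R)$.

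For the final assertion, when $G$ is finite I would use that $\R$ is a uniquely divisible abelian group, so multiplication by $|G|$ is an automorphism of the trivial $G$-module $\R$ and therefore of $H^3(G,\R)$; since $|G|$ also annihilates $H^n(G,M)$ for every $n\ge 1$ and every $G$-module $M$, we obtain $H^3(G,\R)=0$, whence $\ob(\alpha)=0$. I do not anticipate a real obstacle: once the machinery of Section~3 is in place this is purely formal, the only point deserving (minor) care being the normalization check that $\ev_1$ corresponds to $q_{\R\to\T}$ rather than to some other covering map $\R\to\T$, which is immediate from Lemma~\ref{presentation}(2).
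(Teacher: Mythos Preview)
Your proposal is correct and follows the same approach as the paper: apply Lemma~\ref{presentation}(2) with $\rho$ the canonical inclusion to identify $K_0^\#(\cO_\infty)\cong\R$ with $\ev_1$ corresponding to $q_{\R\to\T}$, then use ${\ev_1}_*\tob(\alpha)=\ob(\alpha)$ and the vanishing of $H^3(G,\R)$ for finite $G$. The paper's own argument is in fact just the displayed commutative square followed by the words ``which implies''; you have simply spelled out the details implicit in that sentence.
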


We identify $K_0^\#(\cO_\infty)$ with $\R$ and $j_A([1]_0)\in K_0^\#(\cO_\infty)$ with $1$ in what follows.

\begin{conjecture}\label{C3} Let $G$ be a countable discrete group with a reasonable finiteness condition for its cohomology, 
and let $\fP$ be a (possibly empty) set of primes. 
Then every class in 
$$H^3(G,K_0^\#(M_{\fP^\infty}\otimes \cO_\infty))$$
is realized as $\tob(\alpha)$ of a $G$-kernel $\alpha:G\to \Out(M_{\fP^\infty}\otimes \cO_\infty)$.  
\end{conjecture}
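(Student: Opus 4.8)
Here is the strategy I would pursue toward this conjecture.

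The plan is to first pin down the coefficient module and then split realization into a contribution from $M_{\fP^\infty}$ and one from $\cO_\infty$. Since $K_0(M_{\fP^\infty}\otimes\cO_\infty)=\Z_\fP:=\Z[1/p:p\in\fP]\subset\Q$ with $[1]_0=1$, Lemma \ref{presentation},(2) with $\rho$ the inclusion $\Z_\fP\hookrightarrow\R$ identifies $K_0^\#(M_{\fP^\infty}\otimes\cO_\infty)$ with $(\Z_\fP/\Z)\times\R$, so $H^3(G,K_0^\#(M_{\fP^\infty}\otimes\cO_\infty))\cong H^3(G,\Z_\fP/\Z)\oplus H^3(G,\R)$. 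The key structural fact I would record is that $\tob$ is additive under tensor products: taking the product lifting $(\talpha_1\otimes\talpha_2,u_1\otimes u_2)$ and product paths $\tu_1\otimes\tu_2$ yields $\tob(\alpha_1\otimes\alpha_2)=\iota_{1*}\tob(\alpha_1)+\iota_{2*}\tob(\alpha_2)$ for the unital inclusions $\iota_j\colon A_j\to A_1\otimes A_2$; as a special case, for a $G$-kernel $\gamma$ on $B$ and a simple unital $D$, the $G$-kernel $\gamma\otimes\id_D$ on $B\otimes D$ has $\tob(\gamma\otimes\id_D)=(\iota_B)_*\tob(\gamma)$. Since $A=M_{\fP^\infty}\otimes\cO_\infty$ is strongly self-absorbing, transporting along a suitable isomorphism $A\otimes A\cong A$ then reduces realizing a target $(a,c)$ to realizing $a\in H^3(G,\Z_\fP/\Z)$ in the first factor (up to an $\R$-error) and, separately, realizing an arbitrary element of $H^3(G,\R)$ purely in the second factor, then adding the two $G$-kernels.

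For the $\Z_\fP/\Z$-summand I would invoke Conjecture \ref{C2}: under the finiteness hypothesis it provides a $G$-kernel $\alpha_0\colon G\to\Out(M_{\fP^\infty})$ with $\ob_\tau(\alpha_0)$ the prescribed class in $H^3(G,\Z_\fP/\Z)$, and then $\alpha_0\otimes\id_{\cO_\infty}$ is a $G$-kernel on $M_{\fP^\infty}\otimes\cO_\infty$ whose $\tob$ has, in the identification above, first component that class (up to the sign relating $\tob^r$ and $\ob_\tau$). Its second component is some element of $H^3(G,\R)$, which is immaterial because it is corrected in the next step. (Recall that Conjecture \ref{C2} is proved for $G=\Z^n$.)

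What remains, and what I expect to be the hard part, is to show that \emph{every class in $H^3(G,\R)$ is $\tob(\beta)$ for some $G$-kernel $\beta$ on $\cO_\infty$}. Granting this, $\id_{M_{\fP^\infty}}\otimes\beta$ places the class in the $\R$-factor of $K_0^\#(M_{\fP^\infty}\otimes\cO_\infty)$ — the map $\R=K_0^\#(\cO_\infty)\to(\Z_\fP/\Z)\times\R$ induced by the unital inclusion is $s\mapsto(0,s)$ — and combining with the $G$-kernel from the previous paragraph via additivity of $\tob$ yields the desired $(a,c)$. For finite $G$ there is nothing to prove, recovering Theorem \ref{infinite Cuntz}; for $G=\Z^n$ this realization is precisely what the Gabe--Szab\'o classification together with the Dadarlat--Pennig theory of continuous fields with fibre $\cO_\infty$ over $B\Z^n=\T^n$ supplies, as in Theorem \ref{main}. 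For a general group with reasonable cohomological finiteness I see no direct construction: Jones' inflation argument from the proof of Theorem \ref{JK} does not apply, since the twisted crossed products of $\cO_\infty$ by abelian groups used there leave the class of algebras with $K_1=0$ (already $\cO_\infty\rtimes\Z$ acquires $K_1\cong\Z$), and tensoring with $\cO_2$ to repair this would replace $\cO_\infty$ by $\cO_2$. Producing enough $G$-kernels on $\cO_\infty$ to exhaust $H^3(G,\R)$ is therefore the main obstacle, and is what keeps the statement conjectural.
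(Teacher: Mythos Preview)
Your strategy is exactly the reduction the paper carries out: the lemma just before Corollary~\ref{pi-stf} shows that for a $G$-kernel $\alpha$ on $M_{\fP^\infty}$ (with its unique trace) and a $G$-kernel $\beta$ on $\cO_\infty$ one has $(\psi_{A\otimes\cO_\infty,\tau_*})_*\tob(\alpha\otimes\beta)=(-\tob^r(\alpha),\tob(\beta))$, and Corollary~\ref{pi-stf} then records precisely your conclusion that Conjecture~\ref{C3} for $M_{\fP^\infty}\otimes\cO_\infty$ follows from Conjecture~\ref{C2} for $M_{\fP^\infty}$ together with Conjecture~\ref{C3} for $\cO_\infty$. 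One small sharpening: the paper proves (via Lemma~\ref{second0} and the theorem following it) that the $\R$-component of $\tob$ coming from the stably finite factor is actually $0$, not merely ``some element to be corrected later''; this is why the splitting is clean and no extra bookkeeping is needed. Your identification of the remaining open point---realizing all of $H^3(G,\R)$ by $G$-kernels on $\cO_\infty$---and of the cases ($G$ finite, $G=\Z^n$ via Theorem~\ref{main}) where it is known, also matches the paper's discussion.
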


Note that we have 
$$K_0^\#(M_{\fP^\infty}\otimes \cO_\infty)\cong \tK_0(M_{\fP^\infty})\times \R$$
thanks to Lemma \ref{presentation},(2). 
\subsection{Stably finite case} 
In this subsection, we assume that $A$ has a trace preserved by a $G$-kernel $\alpha:G\to \Out(A)$, 
and show that $\tob(\alpha)$ and its reduced form $\tob^r(\alpha)$ have the same information. 

We denote by $C^\infty_*([0,1],U(A))$ the set of smooth maps $f:[0,1]\to U(A)$ satisfying $f(0)=1$. 
We define $\tDelta_\tau:C^\infty_*([0,1],U(A))\to \R$ by 
$$\tDelta_\tau(f)=\frac{1}{2\pi i}\int_0^1\tau(f(t)^{-1}f'(t))dt.$$
Then we have $\Delta_\tau(f(1))=\tDelta_\tau(f)+\tau_*K_0(A)$ by definition, and 
$\tDelta_\tau(f)=\tau_*[f]_0$ if $f(1)=1$. 

\begin{lemma}\label{second0} If $f\in C^\infty_*([0,1],U(A))$ satisfies $f(1)\in e^{2\pi i\tau_*K_0(A)}$ and $\tDelta_\tau(f)=0$, 
we have $\psi_{A,\tau_*}([f])\in \tK_0(A)\times \{0\}$. 
\end{lemma}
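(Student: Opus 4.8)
The plan is to unwind the definitions of $\psi_{A,\tau_*}$ and $K^\#_0(A)$ so that the hypotheses $f(1)\in e^{2\pi i\tau_*K_0(A)}$ and $\tDelta_\tau(f)=0$ directly pin down the second coordinate of $\psi_{A,\tau_*}([f])$. First I would recall from Lemma \ref{presentation} that $\psi_{A,\tau_*}$ is built from $\varphi_A$ via the identification $[(x,r)]\mapsto([x],\tau_*(x)+r)$ on $(K_0(A)\times\R)/\Z([1]_0,-1)$. So it suffices to compute $\varphi_A([f])$, i.e.\ to write $[f]\in K^\#_0(A)$ in the form $j_A(x)+[e_r]$ for suitable $x\in K_0(A)$, $r\in\R$, and then check that $\tau_*(x)+r=0$.

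Concretely, write $f(1)=e^{2\pi i\lambda}$ with $\lambda\in\tau_*K_0(A)$, and pick $x_0\in K_0(A)$ with $\tau_*(x_0)=\lambda$. Consider the path $g=f\cdot e_{-\lambda}$; then $g(0)=1$, $g(1)=1$, so $g$ defines a class $[g]_0\in K_0(A)=\pi_1(U(A))$, and by construction $[f]=j_A([g]_0)+[e_\lambda]$ in $K^\#_0(A)$. Hence $\varphi_A([f])=[([g]_0,\lambda)]$ and $\psi_{A,\tau_*}([f])=([\,[g]_0\,],\tau_*([g]_0)+\lambda)$. The remaining point is to show $\tau_*([g]_0)+\lambda=0$. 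By the remark preceding the lemma, $\tau_*[g]_0=\tDelta_\tau(g)$ whenever $g(1)=1$ (here I use smoothness of the chosen path; if $f$ is only continuous one first homotopes it to a smooth representative without changing the class or $f(1)$). Then $\tDelta_\tau$ is additive under pointwise multiplication of paths — this is the standard computation $\tau((fh)^{-1}(fh)')=\tau(f^{-1}f')+\tau(h^{-1}h')$ using the trace property to cancel the cross term $\tau(h^{-1}f^{-1}f'h)=\tau(f^{-1}f')$ — so $\tDelta_\tau(g)=\tDelta_\tau(f)+\tDelta_\tau(e_{-\lambda})=0+(-\lambda)=-\lambda$, where $\tDelta_\tau(e_{-\lambda})=\frac{1}{2\pi i}\int_0^1\tau(e^{-2\pi i\lambda t}\cdot(-2\pi i\lambda)e^{2\pi i\lambda t})\,dt=-\lambda$. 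Therefore $\tau_*([g]_0)+\lambda=-\lambda+\lambda=0$, giving $\psi_{A,\tau_*}([f])\in\tK_0(A)\times\{0\}$.

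The only genuinely delicate point is the bookkeeping between the continuous picture of $K^\#_0(A)$ and the smooth functional $\tDelta_\tau$: one must ensure that replacing $f$ by a smooth (or even piecewise-smooth, suitably reparametrised) representative is legitimate, i.e.\ that it changes neither the class in $K^\#_0(A)$ nor the value $f(1)$, and that concatenation/multiplication of paths is compatible with the group operation on $K^\#_0(A)$ and with additivity of $\tDelta_\tau$. All of this is routine given the earlier setup — in particular the well-definedness of $\Delta_\tau$ on $U(A)_0$ from \cite{HS84} and the short exact sequence Eq.(\ref{exse}) — so I expect the proof to be short, with the additivity-of-$\tDelta_\tau$ computation and the identity $\tau_*[g]_0=\tDelta_\tau(g)$ doing essentially all the work.
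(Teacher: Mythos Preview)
Your proof is correct and follows essentially the same approach as the paper: multiply $f$ by $e_{-\lambda}$ (with $\lambda=\tau_*x$) to obtain a based loop $g$, compute $\tau_*[g]_0=\tDelta_\tau(g)=-\lambda$ via additivity of $\tDelta_\tau$, and read off $\psi_{A,\tau_*}([f])=([[g]_0],0)$ from Lemma~\ref{presentation}. The only superfluous aside is your worry about smoothness, since the hypothesis already places $f$ in $C^\infty_*([0,1],U(A))$.
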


\begin{proof} We take $x\in K_0(A)$ satisfying $f(1)=e^{2\pi i\tau_*x},$ and set 
$g(t)=f(t)e^{-2\pi i\tau_*x t}$. 
Then $g$ is a based loop in $U(A)$ giving an element of $K_0(A)$, and  
$$\tau_*[g]_0=\tDelta_\tau([g]_0)=\tDelta_\tau(f)-\tau_*x=-\tau_*x.$$
Since $f=ge_{\tau_*x}$, we get 
$$\psi_{A,\tau_*}(f)=(\left[[g]_0\right],\tau_*[g]_0+\tau_*x)=(\left[[g]_0\right],0),$$
showing the statement. 
\end{proof}

Let $\tau_{*,q}:\tK_0(A)\to \T$ be the map defined by 
$$\tau_{*,q}([x])=e^{2\pi i \tau_*x}.$$ 

\begin{theorem} We have 
$$(\pr_2\circ\psi_{A,\tau_*})_*\tob(\alpha)=0,$$ 
$$(\tau_{*,q})_*\tob^r(\alpha)=\ob_\tau(\alpha).$$ 
\end{theorem}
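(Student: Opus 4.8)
The plan is to construct an explicit lifting $(\talpha, u)$ of $\alpha$ that is simultaneously well-adapted to all three invariants $\tob(\alpha)$, $\tob^r(\alpha)$, and $\ob_\tau(\alpha)$, and then read off the two identities from a single computation of the distinguished cocycle. Since $\tau$ is preserved by $\alpha$, I would first choose the lifting so that $\talpha_g$ preserves $\tau$ for every $g$ (possible because each $\alpha_g$ has a $\tau$-preserving representative, as $\tau$ is the unique trace or at least $\alpha$-invariant), and then, following the discussion preceding the definition of $\ob_\tau(\alpha)$, arrange $u(g,h) \in \ker \Delta_\tau$. The key point is that $\tDelta_\tau$ is additive along concatenation of smooth paths and changes in a controlled way under the de la Harpe--Skandalis formalism, so with $\Delta_\tau(u(g,h)) = 0$ I may in fact choose the connecting paths $\{\tu(g,h)(t)\}$ to be \emph{smooth} and to satisfy $\tDelta_\tau(\tu(g,h)) = 0$ as well (first take any smooth path, then correct it by a based loop of the right $K_0$-class times a scalar $e_r$; since $\Delta_\tau(u(g,h)) = 0$ the scalar correction is trivial mod $\tau_* K_0(A)$, so this can be absorbed).

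With such paths fixed, the distinguished path $\widetilde\omega(g,h,k)(t) = \talpha_g(\tu(h,k)(t))\,\tu(g,hk)(t)\,\tu(gh,k)(t)^{-1}\,\tu(g,h)(t)^{-1}$ is smooth, based at $1$, and ends at $\omega(g,h,k)$. Because $\talpha_g$ preserves $\tau$, the functional $\tDelta_\tau$ is invariant under $\talpha_g$, and because $\tDelta_\tau$ is a homomorphism-like cocycle (it is additive for pointwise products up to a commutator term that vanishes under $\tau$, and additive for concatenation), I would compute
$$\tDelta_\tau(\widetilde\omega(g,h,k)) = \tDelta_\tau(\tu(h,k)) + \tDelta_\tau(\tu(g,hk)) - \tDelta_\tau(\tu(gh,k)) - \tDelta_\tau(\tu(g,h)) = 0.$$
Now the lifting was chosen so that $\Delta_\tau(u(g,h)) = 0$ forces $\omega(g,h,k) \in e^{2\pi i \tau_* K_0(A)}$ (this is exactly the mechanism behind $\ob_\tau$), so $\widetilde\omega(g,h,k)$ satisfies the hypotheses of Lemma \ref{second0}: $f(1) \in e^{2\pi i \tau_* K_0(A)}$ and $\tDelta_\tau(f) = 0$. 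Hence $\psi_{A,\tau_*}([\widetilde\omega(g,h,k)]) \in \tK_0(A) \times \{0\}$, which says precisely that the $\R$-component of the $K^\#_0(A)$-valued cocycle representing $\tob(\alpha)$ vanishes. Since $\pr_2 \circ \psi_{A,\tau_*}$ is exactly the projection onto that $\R$-component, this yields $(\pr_2 \circ \psi_{A,\tau_*})_* \tob(\alpha) = 0$.

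For the second identity, I would identify the $\tK_0(A)$-component of $\psi_{A,\tau_*}([\widetilde\omega(g,h,k)])$ explicitly. Writing $\omega(g,h,k) = e^{2\pi i \tau_* x_{g,h,k}}$ for some $x_{g,h,k} \in K_0(A)$, the proof of Lemma \ref{second0} shows that $\psi_{A,\tau_*}([\widetilde\omega(g,h,k)]) = ([\,[g_{g,h,k}]_0\,], 0)$ where $g_{g,h,k}(t) = \widetilde\omega(g,h,k)(t) e^{-2\pi i \tau_* x_{g,h,k} t}$ is the corrected based loop, and $[g_{g,h,k}]_0 = [\widetilde\omega \cdot \overline{e_{\tau_* x}}] $ represents in $\tK_0(A)$ exactly the class recorded by $\tob^r(\alpha)$ — by definition $\tob^r(\alpha) = -(\pr_1 \circ \psi_{A,\tau_*})_* \tob(\alpha)$ up to the sign convention, so I must track signs carefully. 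Then applying $\tau_{*,q}$ sends $[\,[g_{g,h,k}]_0\,]$ to $e^{2\pi i \tau_* [g_{g,h,k}]_0} = e^{-2\pi i \tau_* x_{g,h,k}} $, whose inverse (matching the sign in $\tob^r$) is $\omega(g,h,k)$; and since the chosen lifting has $u(g,h) \in \ker\Delta_\tau$, the cocycle $\omega$ here is \emph{the} representative defining $\ob_\tau(\alpha) \in H^3(G, \tau_* K_0(A)/\Z)$. Chasing this through the canonical identification of $e^{2\pi i \tau_* K_0(A)}$ with $\tau_* K_0(A)/\Z$ gives $(\tau_{*,q})_* \tob^r(\alpha) = \ob_\tau(\alpha)$.

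The main obstacle I anticipate is the bookkeeping around smoothness and the precise sign/normalization conventions: making rigorous that the connecting paths $\tu(g,h)$ can be taken smooth with $\tDelta_\tau = 0$ without disturbing the homotopy classes (so that $\widetilde\omega$ still represents $\tob(\alpha)$), and then reconciling the sign in $\tob^r(\alpha) = -(\pr_1 \circ \varphi_A)_*\tob(\alpha)$ with the sign appearing in the proof of Lemma \ref{second0} and with the direction of the isomorphism $e^{2\pi i \tau_* K_0(A)} \cong \tau_* K_0(A)/\Z$. The conceptual content — that $\tDelta_\tau(\widetilde\omega) = 0$ because $\talpha$ preserves $\tau$ and $u(g,h) \in \ker\Delta_\tau$, combined with Lemma \ref{second0} — is short; the risk is purely in the signs and in confirming that $\tDelta_\tau$ genuinely kills the commutator correction term that arises when one expands $\tDelta_\tau$ of a product (which it does, since $\tau$ is a trace, so $\tau(f^{-1}(fg)') $ unpacks to $\tau(f^{-1}f') + \tau(g^{-1}g')$ after a trace-cyclicity cancellation).
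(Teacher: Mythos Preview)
Your proposal is correct and follows essentially the same route as the paper: choose a lifting with $\Delta_\tau(u(g,h))=0$, lift each $u(g,h)$ to a smooth path $\tu(g,h)$ with $\tDelta_\tau(\tu(g,h))=0$, use the trace property and $\tau$-invariance of $\talpha_g$ to conclude $\tDelta_\tau(\widetilde\omega(g,h,k))=0$, and then invoke Lemma~\ref{second0} (together with Lemma~\ref{presentation}) to read off both identities, with the minus sign in the definition of $\tob^r$ cancelling the sign $\tau_*[g]_0=-\tau_*x$ produced in the proof of Lemma~\ref{second0}. The only superfluous step is your explicit arrangement that each $\talpha_g$ preserve $\tau$: this is automatic, since inner automorphisms preserve traces and the standing hypothesis is that the $G$-kernel $\alpha$ preserves $\tau$.
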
 

\begin{proof} Let $(\talpha,u)$ be a lifting of a $G$-kernel satisfying $\Delta_\tau(u_{g,h})=0$  
so that $\ob_\tau(\alpha)$ is given by the cohomology class of 
$$\omega(g,h,k)=\talpha_g(u(h,k))u(g,hk)u(gh,k)^{-1}u(g,h)^{-1}.$$
We can choose a smooth path $\tu(g,h)\in C^\infty_*([0,1],U(A))$ from $1$ to $u(g,h)$ satisfying $\tDelta_\tau(\tu(g,h))=0$ 
for each pair $g,h\in G$.
With this choice of paths, the path
$$\widetilde{\omega}(g,h,k)(t)=\talpha_g(\tu(h,k)(t))\tu(g,hk)(t)\tu(gh,k)(t)^{-1}\tu(g,h)(t)^{-1}$$
satisfies $\tDelta_\tau(\widetilde{\omega}(g,h,k))=0$. 
Thus Lemma \ref{second0} and Lemma \ref{presentation} imply
$$\pr_2\circ \psi_{A,\tau_*}([\widetilde{\omega}(g,h,k)])=0,$$
$$\tau_{*,q}\circ\pr_1\circ \psi_{A,\tau_*}([\widetilde{\omega}(g,h,k)])=-\omega_{g,h,k}.$$
\end{proof}

Since $\cO_\infty$ is $KK$-equivalent to $\C$, we identify $K_0(A\otimes \cO_\infty)$ with $K_0(A)$. 
Note that although $A\otimes \cO_\infty$ has no trace, the homomorphism $\tau_*:K_0(A\otimes \cO_\infty)\to \R$ makes sense. 

\begin{lemma} Let $\alpha:G\to \Out(A)$ be a $G$-kernel with an invariant trace, and let $\beta:G\to \Out(\cO_\infty)$ 
be a $G$-kernel. 
Then we have 
$$(\psi_{A\otimes \cO_\infty,\tau_*})_*\tob(\alpha\otimes \beta)=(-\tob^r(\alpha),\tob(\beta))\in 
H^3(G,\tK_0(A))\times H^3(G,\R).$$
\end{lemma}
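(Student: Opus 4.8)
The idea is to build a lifting of the product $G$-kernel $\alpha\otimes\beta$ directly from given liftings of $\alpha$ and $\beta$, track the resulting $3$-cocycle in $K_0^\#(A\otimes\cO_\infty)$, and identify its two components under $\psi_{A\otimes\cO_\infty,\tau_*}$ with the data of $\alpha$ and $\beta$ separately. First I would fix a lifting $(\talpha,u)$ of $\alpha$ with $\Delta_\tau(u(g,h))=0$ (so $\tob^r(\alpha)$ and $\ob_\tau(\alpha)$ are computed from $\talpha_g(u(h,k))u(g,hk)u(gh,k)^{-1}u(g,h)^{-1}$), and a lifting $(\tbeta,v)$ of $\beta$ on $\cO_\infty$. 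Then $(\talpha_g\otimes\tbeta_g,\; u(g,h)\otimes v(g,h))$ is a lifting of $\alpha\otimes\beta$, with associated $\T$-valued cocycle $\omega_\alpha(g,h,k)\omega_\beta(g,h,k)$. I would choose smooth paths $\tu(g,h)$ from $1$ to $u(g,h)$ with $\tDelta_\tau(\tu(g,h))=0$ and paths $\tilde v(g,h)$ from $1$ to $v(g,h)$, and use the product path $\tu(g,h)(t)\otimes\tilde v(g,h)(t)$ to compute $\tob(\alpha\otimes\beta)$.

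**Key steps.** The $K_0^\#$-valued cocycle representing $\tob(\alpha\otimes\beta)$ is then the class of $t\mapsto \widetilde{\omega}_\alpha(g,h,k)(t)\otimes\widetilde{\omega}_\beta(g,h,k)(t)$, where $\widetilde{\omega}_\alpha$ is the $C^\infty_*([0,1],U(A))$-path with endpoint $\omega_\alpha(g,h,k)\in\T$ and $\tDelta_\tau(\widetilde{\omega}_\alpha)=0$, and similarly $\widetilde{\omega}_\beta$ has endpoint $\omega_\beta(g,h,k)\in\T$ in $U(\cO_\infty)$. The main computational step is to express this product-path class in $K_0^\#(A\otimes\cO_\infty)$ via $\psi_{A\otimes\cO_\infty,\tau_*}$ as an element of $\tK_0(A)\times\R$. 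Here I would use two facts. First, since $\cO_\infty$ is $KK$-equivalent to $\C$ and $K_0^\#(\cO_\infty)$ is identified with $\R$ (with $j([1]_0)\mapsto 1$, as fixed after Theorem \ref{infinite Cuntz}), the path $\widetilde{\omega}_\beta$ contributes only its $\R$-coordinate, which by $\ev_1\circ\psi_{\cO_\infty}^{-1}$ and the identification $q_{\R\to\T}$ is precisely the class computing $\tob(\beta)$; tensoring by a path in $U(\cO_\infty)$ amounts to adding this real number to the second coordinate without affecting the $\tK_0(A)$-coordinate. Second, by Lemma \ref{second0}, the $A$-path $\widetilde{\omega}_\alpha$ with $\tDelta_\tau=0$ has $\psi_{A,\tau_*}([\widetilde{\omega}_\alpha(g,h,k)])\in\tK_0(A)\times\{0\}$, and its $\tK_0(A)$-component is $-\tob^r(\alpha)$ by the previous theorem. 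Splitting the product path as a concatenation of "$A$-part then $\cO_\infty$-part" (using that $K_0^\#$ is a group and that the two factors commute inside $A\otimes\cO_\infty$), the second coordinate picks up exactly the $\R$-class of $\widetilde{\omega}_\beta$, i.e. $\tob(\beta)$, while the first coordinate is $-\tob^r(\alpha)$.

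**Main obstacle.** The delicate point is the bookkeeping of how $\psi_{A\otimes\cO_\infty,\tau_*}$ decomposes the class of a \emph{product} path $t\mapsto x(t)\otimes y(t)$ in $K_0^\#(A\otimes\cO_\infty)$ into the $\tK_0(A)$- and $\R$-coordinates coming from the two tensor factors. One must verify that the natural map $K_0^\#(A)\times_{\T} K_0^\#(\cO_\infty)\to K_0^\#(A\otimes\cO_\infty)$ (which on loops is the external product $K_0(A)\times K_0(\cO_\infty)\to K_0(A\otimes\cO_\infty)=K_0(A)$, i.e. multiplication by $[1]_{\cO_\infty}$, hence the identity) is compatible with the presentations of Lemma \ref{presentation}, so that under $\psi_{A\otimes\cO_\infty,\tau_*}$ the $\cO_\infty$-factor contributes additively in $\R$ and trivially in $\tK_0(A)$, while the $A$-factor contributes via $\psi_{A,\tau_*}$. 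Once this compatibility is pinned down, combining it with Lemma \ref{second0} and the preceding theorem yields the stated formula $(\psi_{A\otimes\cO_\infty,\tau_*})_*\tob(\alpha\otimes\beta)=(-\tob^r(\alpha),\tob(\beta))$ by a direct cocycle comparison.
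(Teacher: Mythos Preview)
Your approach is correct and essentially the same as the paper's. The paper packages the computation into a single commutative diagram: with $j_l:A\to A\otimes\cO_\infty$, $x\mapsto x\otimes 1$ and $j_r:\cO_\infty\to A\otimes\cO_\infty$, $x\mapsto 1\otimes x$, one has $\tob(\alpha\otimes\beta)=(j_l)_*\tob(\alpha)+(j_r)_*\tob(\beta)$, and then the compatibility $\psi_{A\otimes\cO_\infty,\tau_*}\circ(j_l)_*=((j_l)_*\times\id)\circ\psi_{A,\tau_*}$ and $\psi_{A\otimes\cO_\infty,\tau_*}\circ(j_r)_*=(0\times\id)\circ\psi_{\cO_\infty,\iota}$ (with $\iota([1]_0)=1$) gives the result, using the previous theorem for $(\pr_2\circ\psi_{A,\tau_*})_*\tob(\alpha)=0$ and the definition of $\tob^r$. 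Your ``main obstacle'' --- the bookkeeping of how the product path decomposes under $\psi_{A\otimes\cO_\infty,\tau_*}$ --- is precisely this commutative diagram, and your concatenation of the $A$-part and the $\cO_\infty$-part is exactly the additive splitting via $j_l$ and $j_r$; so the content is identical, though the paper's formulation is shorter.
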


\begin{proof} Let $j_l:A\to A\otimes \cO_\infty$ and $j_r:\cO_\infty \to A\otimes \cO_\infty$ be the maps given by 
$x\mapsto x\otimes 1$ and $x\mapsto 1\otimes x$ respectively. 
Then the statement follows from the commutative diagram:
$$
\begin{tikzcd}
K_0^\#(A) \arrow[r,"{j_l}_*"] \arrow[d,"\psi_{A,\tau_*}"]&
K_0^\#(A\otimes \cO_\infty) \arrow[d,"\psi_{A\otimes \cO_\infty,\tau_*}"]
&K_0^\#(\cO_\infty)\arrow[l,"{j_r}_*"'] \arrow[d,"\psi_{\cO_\infty,\iota}"]\\
\tK_0(A)\times \R\arrow[r,"{j_l}_*\times \id"]
&\tK_0(A\otimes \cO_\infty)\times \R 
&\R\arrow[l,"0\times \id"']
\end{tikzcd}
$$
where $\iota:K_0(\cO_\infty)\to \R$ is the map given by $\iota([1]_0)=1$.  
\end{proof}

The lemma shows that the realization problem of the invariant $\tob$ for $A\otimes \cO_\infty$ is reduced to that for $A$ and for 
$\cO_\infty$. 
In particular, we get 

\begin{cor}\label{pi-stf} If Conjecture \ref{C2} is true for $(G,M_{\fP^\infty})$ and Conjecture \ref{C3} is true for $(G,\cO_\infty)$, 
then Conjecture \ref{C3} is true for $(G, M_{\fP^\infty}\otimes \cO_\infty)$. 
\end{cor}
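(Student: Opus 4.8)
The strategy is to split a given class in $H^3(G,K_0^\#(M_{\fP^\infty}\otimes\cO_\infty))$ into the two coordinates supplied by Lemma \ref{presentation},(2), to realise each coordinate separately over $M_{\fP^\infty}$ and over $\cO_\infty$ using the two hypothesised conjectures, and then to splice the resulting $G$-kernels together by means of the product formula established in the lemma immediately preceding the corollary. Concretely, I first fix the isomorphism $\psi:=\psi_{M_{\fP^\infty}\otimes\cO_\infty,\tau_*}\colon K_0^\#(M_{\fP^\infty}\otimes\cO_\infty)\xrightarrow{\ \simeq\ }\tK_0(M_{\fP^\infty})\times\R$ of Lemma \ref{presentation},(2), where $\tau_*\colon K_0(M_{\fP^\infty}\otimes\cO_\infty)=K_0(M_{\fP^\infty})\to\R$ is the homomorphism coming from the trace of $M_{\fP^\infty}$ (well defined although the tensor product carries no trace), normalised by $\tau_*([1]_0)=1$, and $K_0$ of the tensor product is identified with $K_0(M_{\fP^\infty})$ via the $KK$-equivalence of $\cO_\infty$ with $\C$. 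Given $c\in H^3(G,K_0^\#(M_{\fP^\infty}\otimes\cO_\infty))$, write $\psi_*(c)=(c_1,c_2)$ with $c_1\in H^3(G,\tK_0(M_{\fP^\infty}))$ and $c_2\in H^3(G,\R)$.

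Next I manufacture the two building blocks. Applying Conjecture \ref{C2} for $(G,M_{\fP^\infty})$ to the class $-c_1$ yields a $G$-kernel $\alpha\colon G\to\Out(M_{\fP^\infty})$ with $\ob_\tau(\alpha)=-c_1$; since $M_{\fP^\infty}$ has a unique trace, $\alpha$ automatically has an invariant trace, so the preceding lemma will apply to it. The theorem established above, namely $(\tau_{*,q})_*\tob^r(\alpha)=\ob_\tau(\alpha)$, combined with the injectivity of the map $H^3(G,\tK_0(M_{\fP^\infty}))\to H^3(G,\T)$ induced by $\tau_{*,q}$ — which holds because $\tK_0(M_{\fP^\infty})$ is divisible, exactly the situation flagged in Remark \ref{injective} — then forces $\tob^r(\alpha)=\ob_\tau(\alpha)=-c_1$. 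Applying Conjecture \ref{C3} for $(G,\cO_\infty)$, after identifying $K_0^\#(\cO_\infty)$ with $\R$ as in the discussion following Theorem \ref{infinite Cuntz}, to the class $c_2\in H^3(G,\R)$ yields a $G$-kernel $\beta\colon G\to\Out(\cO_\infty)$ with $\tob(\beta)=c_2$.

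Finally I form the tensor product $\alpha\otimes\beta\colon G\to\Out(M_{\fP^\infty}\otimes\cO_\infty)$, which is again an injective $G$-kernel since $\alpha$ is outer, exactly as in the constructions used in the proof of Theorem \ref{JK} and in the $\Z^n$-case of Conjecture \ref{C2}. Feeding $\alpha\otimes\beta$ into the preceding lemma gives
$$\psi_*\,\tob(\alpha\otimes\beta)=\bigl(-\tob^r(\alpha),\,\tob(\beta)\bigr)=(c_1,c_2)=\psi_*(c),$$
and since $\psi_*$ is an isomorphism we conclude $\tob(\alpha\otimes\beta)=c$. As $c$ was arbitrary, this is precisely the statement of Conjecture \ref{C3} for $(G,M_{\fP^\infty}\otimes\cO_\infty)$.

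The whole argument is essentially bookkeeping layered on top of the two results immediately preceding the corollary, so I do not anticipate a genuine obstacle. The only input that is not purely formal is the injectivity statement from Remark \ref{injective}, which is what lets one transport a realisation of $\ob_\tau(\alpha)$ into one of $\tob^r(\alpha)$; and the one place demanding real care is keeping the several identifications of the relevant $K_0^\#$-groups (via $\varphi$ and $\psi$ of Lemma \ref{presentation}, the $KK$-equivalence of $\cO_\infty$ with $\C$, and the maps $j_l,j_r$ entering the preceding lemma) and the sign in front of $\tob^r(\alpha)$ mutually consistent, so that the coordinates $(c_1,c_2)$ line up correctly across $M_{\fP^\infty}$, $\cO_\infty$, and their tensor product.
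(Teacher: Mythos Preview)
Your proof is correct and follows the same approach as the paper, which simply observes that the preceding lemma reduces the realisation problem for $M_{\fP^\infty}\otimes\cO_\infty$ to those for $M_{\fP^\infty}$ and $\cO_\infty$ separately. One small simplification: the appeal to Remark~\ref{injective} is unnecessary, since for $A=M_{\fP^\infty}$ the map $\tau_{*,q}\colon\tK_0(A)\to\tau_*K_0(A)/\Z$ is already an isomorphism of coefficient modules (unique trace, no infinitesimals), so the theorem gives $\tob^r(\alpha)=\ob_\tau(\alpha)$ directly without any injectivity argument on $H^3$.
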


Before finishing this section, we discuss the case where a $G$-kernel comes from a cocycle action. 
Assume that an outer cocycle action $(\alpha,u)$ has an invariant trace $\tau$. 
We abuse notation and we denote the $G$-kernel arising from $\alpha$ by the same symbol $\alpha$. 
Then of course we have $\ob(\alpha)=0$.  
However $\ob_\tau(\alpha)$ may no be trivial.  
By construction, we have ${q_{K_0(A)\to \tK_0(A)}}_*\kappa^3(\alpha,u)=-\tob^r(\alpha)$, and so 
$$\ob_\tau(\alpha)=-(q_{\tau_*K_0(A)\to \tau_*K_0(A)/\Z}\circ\tau_*)_*\kappa^3(\alpha,u).$$ 

\begin{proposition} Let $(\alpha,u)$ be an outer cocycle action of $G$ on $A$ with an invariant trace $\tau$. 
\begin{itemize}
\item[$(1)$]
The class $(\tau_*)_*\kappa^3(\alpha,u)\in H^3(G,\tau_*K_0(A))$ is the image of the class
$$[\Delta_\tau(u_{g,h})]\in H^2(G,\R/\tau_*K_0(A))$$ 
under the connecting map of the cohomology long exact sequence arising from the coefficients short exact sequence 
$$0\to \tau_*K_0(A)\to \R\to \R/\tau_*K_0(A)\to 0.$$
\item[$(2)$] The class $\ob_\tau([\alpha])\in H^3(G,\tau_*K_0(A)/\Z)$ is the image of the class
$$-[\Delta_\tau(u_{g,h})]\in H^2(G,\R/\tau_*K_0(A))$$ 
under the connecting map of the cohomology long exact sequence arising from the coefficients short exact sequence Eq.(\ref{CSES}). 
\end{itemize}
\end{proposition}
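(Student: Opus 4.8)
The plan is to compute the de la Harpe--Skandalis determinant of the unitaries $\widetilde{\omega}(g,h,k)(1)=\omega(g,h,k)$ directly from a carefully chosen lifting, and to match this with the description of $\kappa^3(\alpha,u)$ coming from Definition \ref{kappa}. First I would fix the outer cocycle action $(\alpha,u)$ and choose, for each pair $g,h$, a smooth path $\tu(g,h)\in C^\infty_*([0,1],U(A))$ from $1$ to $u(g,h)$. As in the proof of the preceding theorem, the based loops
$$\partial\tu(g,h,k)(t)=\alpha_g(\tu(h,k)(t))\,\tu(g,hk)(t)\,\tu(gh,k)(t)^{-1}\,\tu(g,h)(t)^{-1}$$
represent $\kappa^3(\alpha,u)$ in $H^3(G,K_0(A))$, where here $\partial\tu(g,h,k)(1)=u(g,h)\,u(gh,k)\,u(g,hk)^{-1}\,\alpha_g(u(h,k))^{-1}\cdots$ collapses to $1$ by the cocycle identity for $(\alpha,u)$. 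Applying $\tau_*$ and using that $\tDelta_\tau$ of a based loop computes $\tau_*$ of its $K_0$-class, the cocycle representing $(\tau_*)_*\kappa^3(\alpha,u)$ is
$$c(g,h,k)=\tDelta_\tau(\partial\tu(g,h,k))=\tDelta_\tau\bigl(t\mapsto \alpha_g(\tu(h,k)(t))\bigr)+\tDelta_\tau(\tu(g,hk))-\tDelta_\tau(\tu(gh,k))-\tDelta_\tau(\tu(g,h)),$$
where I have used that $\tDelta_\tau$ is additive under pointwise products of smooth paths with values in the unitary group (this follows from $\tau$ being a trace, so the cross terms integrate to zero), and invariance of $\tau$ under $\alpha_g$ gives $\tDelta_\tau(\alpha_g\circ f)=\tDelta_\tau(f)$. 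Hence $c(g,h,k)=\partial b(g,h,k)$ where $b(g,h)=\tDelta_\tau(\tu(g,h))$, and $b(g,h)$ is an $\R$-valued lift of $\Delta_\tau(u(g,h))\in\R/\tau_*K_0(A)$. This exhibits $(\tau_*)_*\kappa^3(\alpha,u)$ as the image of the class $[\Delta_\tau(u(g,h))]\in H^2(G,\R/\tau_*K_0(A))$ under the connecting homomorphism for $0\to\tau_*K_0(A)\to\R\to\R/\tau_*K_0(A)\to 0$, because the connecting map is computed by lifting a $2$-cocycle to an $\R$-valued cochain and taking its coboundary. This proves (1).

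For (2), I would combine (1) with the relation stated just before the proposition, namely
$$\ob_\tau([\alpha])=-\bigl(q_{\tau_*K_0(A)\to\tau_*K_0(A)/\Z}\circ\tau_*\bigr)_*\kappa^3(\alpha,u).$$
So $\ob_\tau([\alpha])$ is the image of $-[\Delta_\tau(u(g,h))]$ under the composite of the connecting map of (1) with the map $H^3(G,\tau_*K_0(A))\to H^3(G,\tau_*K_0(A)/\Z)$ induced by the quotient $\tau_*K_0(A)\to\tau_*K_0(A)/\Z$. The point is then purely homological: the two short exact sequences
$$0\to\tau_*K_0(A)\to\R\to\R/\tau_*K_0(A)\to 0,\qquad 0\to\tau_*K_0(A)/\Z\to\T\to\R/\tau_*K_0(A)\to 0$$
fit into a commutative ladder (the first maps to the second via $\mathrm{id}$ on the quotient $\R/\tau_*K_0(A)$, the quotient map on the kernels, and $q_{\R\to\T}$ in the middle), so the naturality of the connecting homomorphism in the coefficient sequence gives that the quotient map applied to the connecting map of the first sequence equals the connecting map of Eq.(\ref{CSES}). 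Chasing the sign through, $\ob_\tau([\alpha])$ is exactly the image of $-[\Delta_\tau(u(g,h))]$ under $\partial_A\colon H^2(G,\R/\tau_*K_0(A))\to H^3(G,\tau_*K_0(A)/\Z)$, as claimed.

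The main obstacle I expect is the additivity claim $\tDelta_\tau(f\cdot g)=\tDelta_\tau(f)+\tDelta_\tau(g)$ for pointwise products of smooth paths through $U(A)$ starting at $1$ (and the related invariance $\tDelta_\tau(\alpha_g\circ f)=\tDelta_\tau(f)$), since $\tDelta_\tau$ is only claimed earlier to descend to a homomorphism $\Delta_\tau$ on $\pi_1$; one must verify that before passing to homotopy classes the cochain-level formula already has the required additivity, which is where the trace property of $\tau$ enters via $\int_0^1\tau\bigl((fg)^{-1}(fg)'\bigr)dt=\int_0^1\tau(g^{-1}g')dt+\int_0^1\tau\bigl(g^{-1}(f^{-1}f')g\bigr)dt$ and then $\tau(g^{-1}(f^{-1}f')g)=\tau(f^{-1}f')$. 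Once this is in hand the rest is bookkeeping with connecting maps and signs, for which I would lean on the naturality of the long exact sequence in the coefficients and on Remark \ref{injective}'s identification of $\partial_A$.
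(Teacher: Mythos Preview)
Your proposal is correct and follows essentially the same approach as the paper: choose smooth paths $\tu(g,h)$ from $1$ to $u(g,h)$, set $\mu(g,h)=\tDelta_\tau(\tu(g,h))$, and observe that $\tau_*[\partial\tu(g,h,k)]_0=\tDelta_\tau(\partial\tu(g,h,k))=\partial\mu(g,h,k)$, which is exactly the cochain-level description of the connecting map applied to $[\Delta_\tau(u_{g,h})]$. The paper compresses this into a single line and claims both (1) and (2) at once, whereas you spell out the additivity of $\tDelta_\tau$ under pointwise products (via the trace identity) and the naturality-of-connecting-maps argument for (2); these are precisely the details the paper leaves implicit, and your justification of them is correct.
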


\begin{proof}
For each pair $g,h\in G$, we choose $\tu(g,h)\in C^\infty_*([0,1],U(A))$ satisfying $\tu(g,h)(1)=u_{g,h}$, 
and define $\partial \tu(g,h,k)$ as in the definition of $\kappa^3(\alpha,u)$. 
Let $\mu(g,h)=\tDelta_\tau(\tu(g,h))$. 
Then by definition, $(\tau_*)_*\kappa^3(\alpha,u)$ is the cohomology class given by $\tau_*\partial\tu(g,h,k)$. 
On the other hand, we have 
$$\tau_*\partial \tu(g,h,k)=\tilde{\Delta}_\tau(\partial \tu(g,h,k))=\partial \mu(g,h,k),$$
which shows (1) and (2). 
\end{proof}

\begin{remark}\label{sf} Above (1) together with the universal coefficient theorem shows  
$$(\tau_*)_*\kappa^3(\alpha,u)\in \Ext(H_2(G),\tau_*K_0(A))\subset H^3(G,\tau_*K_0(A)).$$
Note that $\Ext(H_2(G),\tau_*K_0(A))$ is quite often a small subgroup of $H^3(G,\tau_*K_0(A)).$ 
For example, it is trivial if either $H_2(G)$ is free, e.g. $G=\Z^n$, or $\tau_*K_0(A)$ is divisible, e.g. $A=M_{\Q}$. 
This is a sharp contrast from the case of Kirchberg algebras, for which we are going to show in the next section that 
the invariant $\kappa^3$ has rich range. 
\end{remark}
\section{Strongly self-absorbing Kirchberg algebras}
\subsection{Semigroup $\cF_A(G)$}
Recall that a unital separable C$^*$-algebra $A$ is strongly self-absorbing if there exists an isomorphism $\psi:A\to A\otimes A$ 
such that $\psi$ is approximately unitarily  equivalent to the map $l:A\to A\otimes A$, $l(x)=x\otimes 1_A$. 
The notion of strongly self-absorbing C$^*$-algebras was introduced in \cite{TW07}, and plays a very important role in 
the classification theory of amenable C$^*$-algebras. 
The reader is referred to \cite[Section 2]{DP-I} for their basic properties. 
Following \cite{DP-II}, we denote by $\cD_{pi}$ the class of strongly self-absorbing Kirchberg algebras in the bootstrap category. 
Throughout this section, we assume $A\in \cD_{pi}$ unless otherwise stated. 
Thus $A$ is isomorphic to either $M_{\fP^\infty}\otimes \cO_\infty$, with possibly empty $\fP$, or $\cO_2$. 
We identify $K_0(A)$ with a subring of $\R$.

Let $G$ be a countable discrete group. 
We denote by $\cF_A(G)$ the set of conjugacy classes of $G$-kernels $\alpha:G\to \Out(A)$. 
Using the fact that $A\otimes A$ is isomorphic to $A$, we can introduce a commutative semigroup structure on $\cF_A(G)$ by 
$$[\alpha]+[\beta]=[\alpha\otimes \beta].$$
Then $\tob$ induces a semigroup homomorphim from $\cF_A(G)$ into $H^3(G,K^\#_0(A))$, which we denote by the same symbol 
$\tob$ by abusing notation.   

The purpose of this section is to show that in some cases $\cF_A(G)$ is a group, and to determine its group 
structure up to extension. 
We warn the reader that $\cF_A(G)$ is not necessarily a group in general. 
For example, the semigroup $\cF_{\cO_2}(G)$ cannot be a group for finite non-trivial $G$ because a $G$-action with the Rohlin property 
absorbs all the other actions by tensor product (see \cite[Theorem 4.2]{I04Duke}). 

From now on, we assume that $G$ is infinite. 
\subsection{Semigroup $\cE_A(G)$}
Before working on $\cF_A(G)$, we need to determine the structure of its cocycle action analogue $\cE_A(G)$ first. 
We denote by $\cE_A(G)$ the set of cocycle conjugacy classes of outer cocycle actions $(\alpha,u)$ of $G$ on $A$. 
In a similar way as above, we can introduce a semigroup structure into $\cE_A(G)$ by tensor product so that  
the forgetful functor map gives a semigroup homomorphism $f:\cE_A(G)\to \cF_A(G)$. 
The purpose of this subsection is to give a topological interpretation of the invariant $\kappa^3(\alpha,u)$ defined on $\cE_A(G)$.  

Recall that we denote $A^s=A\otimes \K$. 
Let $\Aut_0(A^s)$ be the connected component of $\id$ in $\Aut(A^s)$. 
For $\gamma\in \Aut(A^s)$, we see that it is in $\Aut_0(A^s)$ if and only if $\gamma_*[1_A]_0=[1_A]_0\in K_0(A^s)$ 
(see \cite[Theorem 2.5]{DP-I}). 
If a $G$-action $\gamma$ on $A^s$ satisfies $\gamma_g\in \Aut_0(A^s)$ for every $g\in G$, we say that 
$\gamma$ is a $G$-action via $\Aut_0(A^s)$. 
We say that two such actions $\gamma_1$ and $\gamma_2$ are $KK$-trivially cocycle conjugate if 
they are coycle conjugate and the conjugation map $\theta$ can be taken from $\Aut_0(A^s)$. 
We need to generalize this notion to the case where $\gamma_1$ is a $G$-action on ${A^s}^{\otimes m}$ and $\gamma_2$ is a 
$G$-action on ${A^s}^{\otimes n}$. 
In this case, we say that $\gamma_1$ and $\gamma_2$ are KK-trivially cocycle conjugate if they are cocycle conjugate via 
a conjugation map $\theta:{A^s}^{\otimes m}\to {A^s}^{\otimes^n}$ satisfying $\theta_*[1_{A^{\otimes m}}]_0=[1_{A^{\otimes n}}]_0$ 
in $K_0({A^s}^{\otimes n})$. 
We denote by $\cE_A'(G)$ the set of $KK$-trivially cocycle conjugacy classes of outer $G$-actions via $\Aut_0(A^s)$.  
We introduce a semigroup structure into $\cE_A'(G)$ by tensor product as before. 

We first show that $\cE_A(G)$ and $\cE'_A(G)$ are naturally isomorphic. 
For this purpose, first note that we may allow outer cocycle actions $(\beta,V)$ of $G$ on $A^s$ with  $\beta_g\in \Aut_0(A^s)$ 
and $V_{g,h}\in U(M(A^s))$ for all $g,h\in G$ in the definition of $\cE_A'(G)$ because such cocycle actions are always equivalent to 
genuine actions (this essentially follows from the proof of \cite[part II, Theorem 4.1.3]{Su80} in the von Neumann algebra case). 
Thus we can define a semigroup homomorphism from $\cE_A(G)$ to $\cE'(G)$ sending $[(\alpha,u)]$ to 
$[(\alpha\otimes \id_\K,u\otimes 1)]$.  

\begin{lemma} The map $\cE_A(G)\to \cE_A'(G)$ sending $[(\alpha,u)]$ to $[(\alpha\otimes \id_\K,u\otimes 1)]$ is a semigroup 
isomorphism. 
\end{lemma}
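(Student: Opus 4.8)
The plan is to construct an inverse semigroup homomorphism $\cE_A'(G)\to\cE_A(G)$ and verify that the two composites are the identity. The main tool is the observation that $A^s\cong A^s\otimes A$ and, more pertinently, that the corner of $A^s$ by a rank-one projection $e\in\K$ recovers $A$, so that any unital outer $G$-action on $A^s$ can be "cut down" to a (cocycle) $G$-action on $A\cong e A^s e$ after perturbing to make $e$ globally invariant. First I would recall the standard fact (for Kirchberg algebras, using the Kirchberg--Phillips-type absorption, or more elementarily using that $U(M(A^s))$ acts transitively enough on projections of a given $K_0$-class) that given a $G$-action $\beta$ via $\Aut_0(A^s)$ one can find a cocycle $w_g\in U(M(A^s))$ such that $(\Ad w_g\circ\beta_g)(e)=e$ for all $g$, where $e$ is a fixed rank-one projection tensored appropriately; here the condition $\beta_g\in\Aut_0(A^s)$, i.e. $\beta_{g,*}[1_A]_0=[1_A]_0$, is exactly what guarantees $\beta_g(e)$ and $e$ have the same $K_0$-class so that such a unitary exists. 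Restricting $\Ad w_g\circ\beta_g$ and the (adjusted) $2$-cocycle to $eA^se\cong A$ produces an outer cocycle action of $G$ on $A$, and one checks this is well defined on cocycle-conjugacy classes, giving a map $\cE_A'(G)\to\cE_A(G)$.

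Next I would check that this map is a semigroup homomorphism: tensoring two $G$-actions on $A^s$ and cutting by $e\otimes e$ is compatible with first cutting each and then tensoring, using $eA^se\otimes eA^se\cong A\otimes A\cong A$ and the fact that $e\otimes e$ is again a rank-one projection in $\K\otimes\K\cong\K$ up to the canonical identification. Then the two round trips: starting from $[(\alpha,u)]\in\cE_A(G)$, forming $(\alpha\otimes\id_\K,u\otimes 1)$ and cutting by $e$ returns $(\alpha|_{eA^se}, \dots)$, which under the identification $eA^se=A\otimes e\K e\cong A$ is manifestly cocycle conjugate to $(\alpha,u)$; starting from $[(\beta,V)]\in\cE_A'(G)$, cutting by $e$ and then stabilizing gives back a $G$-action on $A^s$ that is $KK$-trivially cocycle conjugate to $\beta$ because the ampliation $A\cong eA^se\hookrightarrow A^s$ followed by stabilization is, up to an inner perturbation of $\beta$ by the $w_g$'s, an automorphism of $A^s$ that fixes $[1_A]_0$ — hence lies in $\Aut_0(A^s)$ — by the discussion preceding the lemma and \cite[Theorem 2.5]{DP-I}.

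The step I expect to be the main obstacle is making the invariance of $e$ argument fully rigorous in the cocycle (as opposed to genuine action) setting and tracking the bookkeeping of the resulting $2$-cocycle through the corner identification: one must produce the unitaries $w_g\in U(M(A^s))$ simultaneously for all $g\in G$ (not just individually), which is where one invokes that an outer cocycle action via $\Aut_0(A^s)$ with $V_{g,h}\in U(M(A^s))$ is equivalent to a genuine action — the parenthetical remark in the text citing the proof of \cite[part II, Theorem 4.1.3]{Su80} — and then one perturbs that genuine action so as to fix $e$. Care is also needed to confirm that the perturbation can be chosen within $\Aut_0(A^s)$, so that the construction descends correctly to $KK$-trivial cocycle conjugacy classes; but this again reduces to the $K_0$-bookkeeping already available. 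Once these points are settled, injectivity and surjectivity of $\cE_A(G)\to\cE_A'(G)$ follow formally from the two composite computations, and compatibility of $\kappa^3$ under the isomorphism is immediate from naturality of Definition \ref{kappa} under the corner embedding, since that embedding induces an isomorphism on $K_0$ and on $\pi_1(U(\cdot))$.
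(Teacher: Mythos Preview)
Your approach is correct and rests on the same core idea as the paper---perturb by unitaries $W_g$ whose existence follows from $\gamma_g\in\Aut_0(A^s)$ so as to fix a full system of matrix units in $1_A\otimes\K$---but the paper organizes the argument more simply as a direct proof of surjectivity and injectivity of the forward map rather than constructing an explicit inverse. In particular, your ``simultaneity'' worry is a non-issue: the $W_g$ may be chosen independently for each $g$ with no compatibility condition, since the perturbed system $(\Ad W_g\circ\gamma_g,\ W_g\gamma_g(W_h)V_{g,h}W_{gh}^*)$ is automatically an equivalent cocycle action, and it is of the form $(\alpha\otimes\id_\K,\,u\otimes 1)$ because both the perturbed automorphisms and the adjusted cocycle commute with $1_A\otimes E_{ij}$ for all $i,j$. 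There is thus no need to reduce to a genuine action first. For injectivity the paper perturbs the conjugating $\theta\in\Aut_0(A^s)$ by an inner automorphism to the form $\theta_0\otimes\id_\K$, after which the intertwining unitaries are forced to be $w_g\otimes 1$; this is precisely the computation you would need to show your inverse map is well defined, so the inverse-map packaging buys nothing. Your closing remark on $\kappa^3$ is extraneous to the lemma.
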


\begin{proof}
First we show that the map is a surjection. 
Let $(\gamma,V)$ be a cocycle $G$-action on $A^s$ satisfying $\gamma_g\in \Aut_0(A^s)$. 
We choose a system of matrix units $\{E_{ij}\}_{i,j\in \N}$ in $\K$ with $E_{11}$ a minimal projection in $\K$ and 
$$\sum_{i=1}^\infty E_{ii}=1$$
converges in the strong operator topology (or the strict topology in $M(\K)$). 
For each $g\in G$, we can choose a unitary $W_g\in U(M(A^s))$ satisfying $W_g\gamma_g(1_A\otimes E_{ij})W_g^*=1_A\otimes E_{ij}$ for all $i,j$. 
Such a unitary $W_g$ exists because there exists a partial isometry $w_g\in A^s$ satisfying $w_g^*w_g=\gamma_g(1_A\otimes E_{11})$ 
and $w_gw_g^*=1\otimes E_{11}$ thanks to ${\gamma_g}_*[1_A\otimes E_{11}]_0=[1_A\otimes E_{11}]_0$, and 
$$W_g=\sum_{i=1}^\infty (1\otimes E_{i1})w_g\gamma_g(1\otimes E_{1i})$$
converges in the strict topology of $M(A^s)$. 
Then $\Ad W_g\circ \gamma_g$ leaves $1_A\otimes E_{ij}$ invariant for all $i,j$, and it is of the form $\alpha_g\otimes \id_\K$. 
Since $W_g\gamma_g(W_h)V_{g,h}W_{gh}^*$ commutes with $1_A\otimes E_{ij}$ for all $i,j$,  
it is of the form $u_{g,h}\otimes 1$. 
Thus $(\gamma,V)$ is equivalent to $(\alpha\otimes \id_\K,u\otimes 1)$. 

Next we show that the map is injection. 
Assume that $(\alpha,u)$ and $(\beta,v)$ are cocycle actions of $G$ such that $(\alpha\otimes \id_\K,u\otimes 1)$ 
and $(\beta\otimes \id_\K,v\otimes 1)$ are $KK$-trivially cocycle conjugate. 
Then there exists $\theta\in \Aut_0(A^s)$ such that $(\theta\circ (\alpha\otimes \id)\circ \theta^{-1},\theta(u\otimes 1))$ and 
$(\beta\otimes \id_\K,v\otimes 1)$ are equivalent. 
As above we may assume that $\theta$ is of the form $\theta=\theta_0\otimes \id_\K$ by perturbing $\theta$ by an 
inner automorphism. 
Thus we see that $(\theta_0\circ \alpha\circ \theta_0^{-1}\otimes \id_\K,\theta_0(u)\otimes 1)$ and 
$(\beta\otimes \id_\K,v\otimes 1)$ are equivalent, and there exist unitaries $W_g\in U(M(A^s))$ satisfying 
$$\Ad W_g\circ (\beta_g\otimes \id_\K)=\theta_0\circ \alpha_g\circ \theta_0^{-1}\otimes \id_\K,$$
$$W_g(\beta_g\otimes \id_\K)(W_h)(v(g,h)\otimes 1)W_{gh}^*=\theta_0(u(g,h))\otimes 1.$$
The first equation implies that $W_g$ commutes with $1_A\otimes \K$ and it is of the form $W_g=w_g\otimes 1$. 
Thus $(\alpha,u)$ and $(\beta,v)$ are cocycle conjugate. 
\end{proof}

For our purpose, it is more convenient to have an explicit formula of a genuine action equivalent to 
$(\alpha\otimes \id_\K,u\otimes 1)$, which is given by the second dual action in the Takesaki-Takai type duality 
for cocycle actions (see \cite{PR89}). 
Although traditionally the second dual action is described in terms of the right regular representation of $G$,  
here we give an action inner conjugate to it using the left regular representation $\lambda$ 
in order to simplify the notation in the proof of Theorem \ref{primary}.  
Let $\{E_{g,h}\}_{g,h}$ be the canonical system of matrix units in $\K(\ell^2(G))$. 
We set 
\begin{equation}\label{sd}
V_g=(\sum_{s\in G}\alpha_{s^{-1}}^{-1}(u(s^{-1},g)^{-1})\otimes E_{s,s})(1_A\otimes \lambda_g)\in U(M(A\otimes \K(\ell^2(G)))).
\end{equation}
Then 
$$V_g(\alpha_g\otimes \id)(V_h)(u(g,h)\otimes 1)V_{gh}^{-1}=1,$$
and we get a genuine action of $G$ on $A^s$ given by 
$$\hhalpha_g=\Ad V_g\circ (\alpha_g\otimes\id_\K).$$

For a $G$-action $\alpha$ on $A^s$, we define a principal $\Aut(A^s)$-bundle $\cP_\alpha$ over the classifying space $BG$ by 
$$\cP_\alpha=(EG\times \Aut(A^s))/G,$$
where the $G$ action above is given by $g\cdot(x,\gamma)=(g\cdot x,\beta_g\circ \gamma)$.  
If moreover $\alpha$ is via $\Aut_0(A^s)$, we define a principal $\Aut_0(A^s)$-bundle $\cP_\alpha^{(0)}$ over $BG$ by 
$$\cP_\alpha^{(0)}=(EG\times \Aut_0(A^s))/G.$$

Since the first non-trivial homotopy group of $\Aut(A^s)$ is $\pi_0(\Aut(A^s))\cong K_0(A)^\times$ (see \cite[Theorem 2.18]{DP-I}), 
the primary obstruction to a continuous section of $\cP_\alpha\to BG$ is in 
$$H^1(BG,\pi_0(\Aut(A^s)))\cong \Hom(G,\pi_0(\Aut(A^s))),$$
which is naturally identified with the composition of $\alpha$ with the quotiont map from $\Aut(A^s)$ to $\pi_0(\Aut(A^s))$. 

Since the first non-trivial homotopy group of $\Aut_0(A^s)$ is 
$\pi_2(\Aut_0(A^s))\cong K_0(A)$, the primary obstruction to a continuous section of 
$\cP_\alpha^{(0)}\to BG$ is in $$H^3(BG,\pi_2(\Aut_0(A^s)))\cong H^3(G,K_0(A)).$$

To compute the primary obstruction in terms of a cocycle action, 
we adopt Milgram's geometric bar construction \cite{Mi67} as a model of $EG$. 
Let $\Delta^n$ be the geometric $n$-simplex 
$$\Delta^n=\{(t_0,t_1,\cdots,t_n)\in \R^{n+1};\; \sum_{i=0}^nt_i=1,\;t_i\geq 0\}.$$
We define $d^i:\Delta^{n-1}\to \Delta^n$ for $0\leq i\leq n$, and $s_i:\Delta^{n+1}\to \Delta^n$ 
for $0\leq i\leq n$ by 
$$d^i(t_0,\cdots,t_{n-1})=(t_0,\cdots,t_{i-1},0,t_{i+1},\cdots,t_{n-1}),$$
$$s^i(t_0,\cdots,t_{n+1})=(t_0,\cdots,t_{i-1},t_i+t_{i+1},t_{i+2},\cdots,t_{n+1}).$$
Then 
$$EG=(\coprod_{k=0}^\infty G\times \Delta^k\times G^k)/\sim,$$
where the equivalence relation $\sim$ is generated by 
$$(g_0;d^i(t);g_1,\cdots,g_n) 
 \sim \left\{
\begin{array}{ll}
(g_0g_1;t;g_2,\cdots,g_n) , &\quad i=0 \\
(g_0;t;g_1,\cdots,g_ig_{i+1},\cdots,g_n) , &\quad 1\leq i\leq n-1 \\
(g_0;t;g_1,\cdots,g_{n-1}) , &\quad i=n
\end{array}
\right.$$
$$(g_0;t;g_1,\cdots,g_{i-1},e,g_{i+1},\cdots,g_n)\sim (g_0;s^i(t);g_1,\cdots,g_{i-1},g_{i+1},\cdots,g_n),$$
and a $G$-action is given by $g\cdot(g_0;t;g_1,\cdots,g_n)=(gg_0;t;g_1,\cdots,g_n)$. 
The $n$-skeleton on $EG$ is 
$$E_nG=(\coprod_{k=0}^n G\times \Delta^k\times G^k)/\sim,$$
and we set $B_nG=E_nG/G$. 

We can regard $(g_0:\Delta^n:g_1,g_2,\cdots,g_n)$ as a $n$-simplex whose vertices are labeled by 
$$(g_0,g_0g_1,g_0g_1g_2,\cdots, g_0g_1\cdots g_n)$$
if $g_i\neq e$ for $i=1,2,\cdots,n$. 
\begin{figure}[H]
\centering
\begin{tikzpicture}
\draw (0,0)--(2,0)--(2.6,1.2)--(1,1.7)--(2,0);
\draw (0,0)--(1,1.7);
\draw[densely dotted](0,0)--(2.6,1.2);
\draw(0,0)node[below]{$e$};
\draw(2,0)node[below]{$g_1$};
\draw(2.6,1.2)node[right]{$g_1g_2g_3$};
\draw(1,1.7)node[above]{$g_1g_2$};
\end{tikzpicture}
\caption{$(e:\Delta^3:g_1,g_2,g_3)$}
\end{figure}
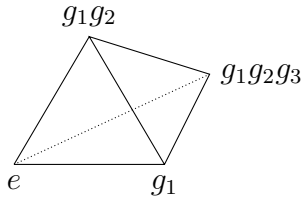

\begin{theorem}\label{primary}  Let $A$ be a strongly self-absorbing C$^*$-algebra with trivial $K_1(A)$ 
and let $G$ be a countably infinite discrete group. 
Let $(\alpha,u)$ be a cocycle action of $G$ on $A$. 
Then the primary obstruction to a continuous section of $\cP_{\hhalpha}^{(0)}\to BG$ is $\kappa^3(\alpha,u)$. 
(We do not assume that $A$ is either a Kirchberg algebra or in the bootstrap category. 
We do not assume that the cocycle action $(\alpha,u)$ is outer.)
\end{theorem}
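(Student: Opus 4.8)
The plan is to compute the primary obstruction of $\cP_{\hhalpha}^{(0)} \to BG$ directly from a cochain-level representative, using Milgram's bar construction as the model for $EG$ and checking that the resulting $3$-cocycle in $H^3(G, K_0(A))$ matches the defining cocycle of $\kappa^3(\alpha, u)$ from Definition \ref{kappa}. First I would recall the general recipe: since $\pi_1(\Aut_0(A^s)) = 0$ (the first nonvanishing homotopy is $\pi_2(\Aut_0(A^s)) \cong K_0(A)$), a section over the $2$-skeleton $B_2 G$ always exists, and the obstruction to extending it over $B_3 G$ is a cellular $3$-cocycle whose value on each $3$-cell is the class in $\pi_2(\Aut_0(A^s))$ of the map $S^2 \to \Aut_0(A^s)$ obtained by gluing the section's values on the four boundary faces. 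The bundle $\cP_{\hhalpha}^{(0)}$ is the Borel construction $(EG \times \Aut_0(A^s))/G$, so a section over $B_n G$ is the same data as a $G$-equivariant map $E_n G \to \Aut_0(A^s)$; concretely, on the $1$-cells $(e:\Delta^1:g)$ one must choose $\sigma_g \in \Aut_0(A^s)$ with the equivariance forcing the values on all translates, and the obstruction cocycle on a $3$-cell $(e:\Delta^3:g,h,k)$ with vertices $e, g, gh, ghk$ is assembled from $\sigma_g, \sigma_h, \sigma_k$ and the chosen $2$-cell fillings.

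Next I would make the choices explicit using the formula \eqref{sd} for $\hhalpha_g = \Ad V_g \circ (\alpha_g \otimes \id_\K)$. The point of having written the second dual action via the left regular representation is exactly that the $1$-cochain can be taken to be $\sigma_g = \hhalpha_g$ itself (or an $\Aut_0$-representative built from it), and then the "composition defect" over a $2$-cell $(e:\Delta^2:g,h)$ is governed by the unitary relating $\hhalpha_g \circ \hhalpha_h$ to $\hhalpha_{gh}$, which by construction of the second dual action is inner — indeed $V_g (\alpha_g \otimes \id)(V_h)(u(g,h)\otimes 1) V_{gh}^{-1} = 1$, so after absorbing the $u(g,h)$ the composition defect is implemented by a unitary path controlled by a lift $\tu(g,h)$ of $u(g,h)$. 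Choosing over each $2$-cell the homotopy $\{\Ad \tu(g,h)(t)\}$ (extended by the Kirchberg-algebra stability of $\Aut_0$ to an actual path of automorphisms), the $3$-cell obstruction becomes the class of the based loop in $\Aut_0(A^s)$ built from $\talpha_g(\tu(h,k)(t))$, $\tu(g,hk)(t)$, $\tu(gh,k)(t)^{-1}$, $\tu(g,h)(t)^{-1}$ — that is, exactly $\partial \tu(g,h,k)$ from Definition \ref{kappa}. Under the identification $\pi_2(\Aut_0(A^s)) \cong \pi_1(U(A)) \cong K_0(A)$ (the inner automorphisms $\Ad(\,\cdot\,)$ give $U(A) \to \Aut_0(A^s)$ inducing this iso on $\pi_2$ versus $\pi_1$, by \cite[Theorem 2.18]{DP-I} together with Bott periodicity), this class is $[\partial \tu(g,h,k)]_0 \in K_0(A)$, and cocycle/coboundary identities transport verbatim, giving $\kappa^3(\alpha,u)$.

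The main obstacle I expect is bookkeeping the face maps $d^i$ of Milgram's model against the four "partial composites" $\talpha_g(u(h,k))$, $u(g,hk)$, $u(gh,k)$, $u(g,h)$, i.e. verifying that gluing the four $2$-cell fillings around $\partial(e:\Delta^3:g,h,k)$ really reproduces the specific ordered product defining $\partial\tu(g,h,k)$ rather than some reindexed or inverted variant, and checking the orientation/sign conventions so that the identification of $H^3(BG;\pi_2) \cong H^3(G;K_0(A))$ with simplicial group cohomology is the standard one. A secondary technical point is to justify replacing the unitary paths $\{\Ad\tu(g,h)(t)\}$ in $\Inn(A^s)$ by genuine paths in $\Aut_0(A^s)$ and arguing the primary obstruction is independent of this enhancement — this should follow because $\Inn(A^s) \hookrightarrow \Aut_0(A^s)$ induces an isomorphism on $\pi_2$ (both equal $K_0(A)$, and $\pi_1(\Aut_0(A^s)/\Inn(A^s))$, $\pi_2$ of the quotient vanish in the relevant range), but it needs to be stated carefully. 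Once these indexing and enhancement issues are pinned down, the rest is the routine observation that the obstruction cocycle is independent of the path choices (matching the corresponding statement in Definition \ref{kappa}) and that changing $(\alpha,u)$ within its equivalence class changes the section by a homotopy, hence leaves the obstruction class unchanged.
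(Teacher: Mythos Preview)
Your high-level setup (Milgram's bar model, sections as $G$-equivariant maps $E_nG\to\Aut_0(A^s)$, the obstruction as the $\pi_2$-class on $\partial(e:\Delta^3:g,h,k)$) agrees with the paper. But the proof then diverges at the step you treat as routine, and that step is where the real content lies.

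The paper does \emph{not} work in $\Aut_0(A^s)$. It immediately replaces $\Aut_0(A^s)$ by the homotopy-equivalent space $Pr(A^s)_{[1_A]_0}$ of projections via $\gamma\mapsto\gamma(1\otimes E_{e,e})$, and builds the equivariant map $\varphi$ with explicit projection formulas: $\varphi((g:\Delta^0))=1_A\otimes E_{g,g}$, linear-combination projections $1_A\otimes P_{t_0\delta_e+t_1\delta_g}$ on $1$-cells, and a genuinely two-parameter filling on each $2$-cell (a region where $\varphi$ is a rank-one projection in a three-dimensional corner, plus a collar where $\Ad\tW_{g}(r)$ is applied). The obstruction is then read off in $K_0(S^2A^s)\cong K_0(A)$ by Bott periodicity, and identifying it with $[\partial\tu(g_1,g_2,g_3)]_0$ takes several pages of deformations (your ``bookkeeping'') culminating in an explicit $2\times 2$ matrix calculation. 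None of this is visible in your outline.

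Two concrete problems with your shortcut. First, a map of spaces $\Ad:U(A)\to\Aut_0(A^s)$ induces $\pi_n\to\pi_n$, never $\pi_1\to\pi_2$; the isomorphism $\pi_2(\Aut_0(A^s))\cong K_0(A)$ in the paper goes through the projection space and Bott periodicity, not through $\Ad$. So the sentence ``$\Ad(\cdot)$ \ldots\ inducing this iso on $\pi_2$ versus $\pi_1$'' is not a valid mechanism, and without it you have no link between the $S^2\to\Aut_0(A^s)$ obstruction and the $S^1\to U(A)$ loop $\partial\tu$. Second, your $2$-cell ``filling'' is a one-parameter family $\{\Ad\tu(g,h)(t)\}_{t\in[0,1]}$, i.e.\ a path, not a map out of $\Delta^2$; you never specify the $1$-cell paths from $\id$ to $\hhalpha_g$ either, so there is nothing for this path to be a null-homotopy \emph{of}. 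The paper's projection model is precisely what supplies honest two-dimensional fillings and makes the $\pi_2$ computation tractable; absent that idea, the proposal is an outline with the computation-carrying middle removed.
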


\begin{proof}
Note that a partial section of the fiber-bundle $\cP_{\hhalpha}^0\to BG$ defined on $B_nG$ is identified with  a continuous map 
$\varphi:E_nG\to \Aut_0(A^s)$ satisfying $\varphi(g\cdot x)=\hhalpha_g\circ\varphi(x)$ because every partial section is of the form 
$x\mapsto [(x,\varphi(x))]$. 
To compute the primary obstruction class to a section of $\cP_{\hhalpha}^0\to BG$,  
our task is to choose a partial section $\varphi:E_2G\to \Aut_0(A^s)$, and comupute the element in $\pi_2(\Aut(A^s))$ arising from  
the restriction of $\varphi$ to $\partial (e:\Delta^3:g_1,g_2,g_3)$ (see \cite[Section 7.6]{DK01}). 

Let $Pr(A^s)_{[1_A]_0}$ be the set of projections in $A^s$ whose $K_0$-classes are equal to $[1_A]_0$.  
Since the map 
$$\Aut_0(A^s)\ni \gamma\to \gamma(1\otimes E_{e,e})\in Pr(A^s)_{[1_A]_0}$$
gives a homotopy equivalence between $\Aut_0(A^s)$ and $Pr(A^s)_{[1_A]_0}$ (see \cite[Corollary 2.8]{DP-I}), 
we may and do consider a $G$-equivariant map 
from $E_2G$ to $Pr(A^s)_{[1_A]_0}$ instead of a partial section $E_2G\to \Aut_0(A^s)$. 
Here the $G$-action on $Pr(A^s)_{[1_A]}$ is given by $\hhalpha$. 

Since $G$ is torsion-free, we may and do normalize $(\alpha_g,u)$ so that $\alpha_e=\id$, $\alpha_g^{-1}=\alpha_g$, 
and $u(e,g)=u(g,e)=u(g,g^{-1})=1$ hold. 
Thus we have $\alpha_{s^{-1}}^{-1}(u(s^{-1},g)^{-1})=u(s,s^{-1}g)$. 
Let 
$$W_g=\sum_{s\in G}u(s,s^{-1}g)\otimes E_{s,s}.$$
Then $\hhalpha_g=\Ad W_g\circ (\alpha_g\otimes \Ad\lambda_g)$. 
We often use the equality $u(g,h)=u(gh,h^{-1})^*$. 

For $\eta\in \ell^2(G)\setminus \{0\}$, we denote by $P_\eta\in \K(\ell^2(G))$ the projection onto $\C\eta$. 
Let $\{\delta_g\}_{g\in G}$ be the canonical orthonormal basis of $\ell^2(G)$. 
Then $E_{g,g}=P_{\delta_g}$. 

Now we construct a $G$-equivariant map $\varphi:E_2G\to Pr(A^s)_{[1_A]_0}$. 
First we define $\varphi((g:\Delta^0))=1_A\otimes E_{g,g}$, which is an equivariant map from $E_0G$ to $Pr(A^s)_{[1_A]_0}$. 
Next we extend $\varphi$ to $(e:\Delta^1:g)$ by 
$$\varphi((e:(t_0,t_1):g))=1_A\otimes P_{t_0 \delta_e+t_1\delta_{g}}.$$
To make an equivariant extension of $\varphi$ to $E_1G$, we set 
$$\varphi((g_0:(t_0,t_1):g_1))=\hhalpha_{g_0}(\varphi((e:(t_0,t_1):g_1)))
=\Ad W_{g_0}(1\otimes P_{t_0\delta_{g_0}+t_1\delta_{g_0g_1}}).$$

Now we extend $\varphi$ to $(e:\Delta^2:g_1,g_2)$. 
On the boundary of $(e:\Delta^2:g_1,g_2)$, we already have 
$$\varphi((e:(t_0,t_1,0):g_1,g_2))=1_A\otimes P_{t_0\delta_e+t_1\delta_{g_1}},$$
$$\varphi((e:(t_0,0,t_2):g_1,g_2))=1_A\otimes P_{t_0\delta_e+t_2\delta_{g_1g_2}},$$
$$\varphi((e:(0,t_1,t_2):g_1,g_2))=W_{g_1}(1_A\otimes P_{t_1\delta_{g_1}+t_2\delta_{g_1g_2}})W_{g_1}^*.$$
Let 
$$\psi_{g_1,g_2}((e:(t_0,t_1,t_2):g_1,g_2))=1_A\otimes P_{t_0\delta_e+t_1\delta_{g_1}+t_2\delta_{g_1g_2}},$$
which is a continuous map from $(e:\Delta^2:g_1,g_2)$ to $Pr(A^s)_{[1_A]_0}$. 
We need to deform $\psi_{g_1,g_2}$ on a neighborhood of $(g_1:\Delta^1:g_2)$ to extend $\varphi$. 
We choose a continuous path $\{\tu(g_1,g_2)(t)\}_{[0,1]}$ from 1 to $u(g_1,g_2)$ in $U(A)$, and set 
$$\tW_{g}(t)=\sum_{s\in G}\tu(s,s^{-1}g)(t)\otimes E_{s,s}.$$ 
We may and do assume $\tu(g,h)=\tu(gh,h^{-1})^*$.

We choose a smooth convex curve $C$ connecting $g_1$ and $g_1g_2$ inside $(e:\Delta^2:g_1,g_2)$ as in Figure 2. 
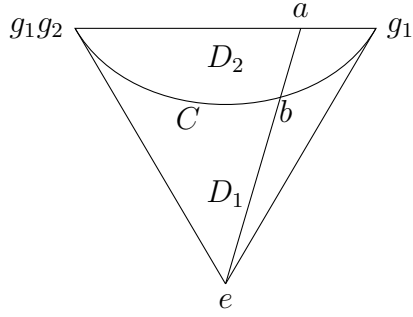
\begin{figure}[H]
\centering
\begin{tikzpicture}
\draw (0,0)--(2,3.4)--(-2,3.4)--(0,0);
\draw(-2,3.4)to[out=-60,in=240](2,3.4);
\draw(0,0)--(1,3.4);
\draw(0,0)node[below]{$e$};
\draw(2,3.4)node[right]{$g_1$};
\draw(-2,3.4)node[left]{$g_1g_2$};
\draw(1,3.4)node[above]{$a$};
\draw(0.8,2.3)node{$b$};
\draw(-0.5,2.5)node[below]{$C$};
\draw(0,3)node{$D_2$};
\draw(0,1.2)node{$D_1$};
\end{tikzpicture}
\caption{$(e:\Delta^2:g_1,g_2)$}
\end{figure}
Let $a\in (g_1:\Delta^1:g_2)$ and let $b$ be the intersection of $C$ and the line segment $ea$.  
We extend $\varphi$ to the region $D_1$ (including the boundary) below $C$ and the region $D_2$ above $C$ separately. 
We first take a homeomorphism $h_1:D_1 \to (e:\Delta^2:g_1,g_2)$ whose restriction to the line segment 
$eb$ is an affine map from $eb$ to $ea$. 
Then we set $\varphi((e:t:g_1,g_2))=\psi_{g_1,g_2}\circ h_1(t)$ for $t\in D_1$. 

For $D_2$, we define $\varphi$ as follows. 
Let $c$ be the point internally divides the line segment $ba$ in  the ratio of $r:1-r$. 
Then we set 
$\varphi(c)=\Ad\tW_{g_1}(r)(\psi_{g_1,g_2}(a))$. 
In other words, we parametrize $D_2$ by $(t,r)\in [0,1]\times [0,1]$ with each of $\{0\}\times [0,1]$ and $\{1\}\times [0,1]$ 
collapsed to one point respectively, and define $\varphi$ by 
$$\Ad\tW_{g_1}(r)(1_A\otimes P_{(1-t)\delta_{g_1}+t\delta_{g_1g_2}}).$$
Then we get a continuous extension of $\varphi$ to $(e:\Delta^2:g_1,g_2)$.   
We extend $\varphi$ to the whole $E_2G$ by setting 
$$\varphi((g_0:t:g_1,g_2))=\hhalpha_{g_0}(\varphi((e:t:g_1,g_2))).$$

Our task is to compute the element of $\pi_2(Pr(A^s)_{[1_A]_0})$ determined by the restriction of $\varphi$ to 
$\partial (e:\Delta^3:g_1,g_2,g_3)$.  
For this, it suffices to compute 
$$[\varphi|_{\partial (e:\Delta^3:g_1,g_2,g_3)}]_0-[1_A]_0\in K_0(S^2A^s)\cong K_0(A),$$
by Bott periodicity. 

We first deform $\varphi$ on $(e:\Delta^2:g_1g_2,g_3)\cup (g_1:\Delta^2:g_2,g_3)$. 
We choose a homeomorphism $h_2$ of it as in Figure 3 leaving the boundary invariant such that  
$h_1(D_i)=D_i'$, $i=3,4,5$. 
\begin{figure}[H]
\centering
\begin{tikzpicture}
\draw (-4,0)--(-2,0)--(-3,1.8)--(-4,0);
\draw (-4,0)--(-3,-1.7)--(-2,0);
\draw (2,0)--(4,0)--(3,1.7)--(2,0);
\draw (2,0)--(3,-1.7)--(4,0);
\draw (-4,0)to[out=-30,in=180](-3,-0.5);
\draw (-3,-0.5)to[out=0,in=210](-2,0);
\draw (-4,0)to[out=-50,in=180](-3,-1);
\draw (-3,-1)to[out=0,in=230](-2,0);
\draw(2,0)to[out=-60,in=240](4,0);
\draw(2,0)to[out=60,in=120](4,0);
\draw(-3,-1.7)node[below]{$e$};
\draw(-2,0)node[right]{$g_1g_2$};
\draw(-4,0)node[left]{$g_1g_2g_3$};
\draw(-3,1.7)node[above]{$g_1$};
\draw(-3,0.1)node[below]{$D_5$};
\draw(-3,-0.4)node[below]{$D_4$};
\draw(-3,-0.9)node[below]{$D_3$};
\draw(3,-1.3)node[above]{$D_3'$};
\draw(3,0.1)node[below]{$D_4'$};
\draw(3,-0.1)node[above]{$D_5'$};
\draw(3,-1.7)node[below]{$e$};
\draw(4,0)node[right]{$g_1g_2$};
\draw(2,0)node[left]{$g_1g_2g_3$};
\draw(3,1.7)node[above]{$g_1$};
\draw(0,0)node{$\to$};
\draw(-7,0)node{$h_2:$};
\end{tikzpicture}
\caption{}
\end{figure}
We can deform $\varphi$ into $\varphi_1=\varphi\circ h_2$ on $(e:\Delta^2:g_1g_2,g_3)\cup (g_1:\Delta^2:g_2,g_3)$ so that we have the 
following description of $\varphi_1$ on each region. 
On $(g_1:\Delta^2:g_2,g_3)$ we have $\varphi_1=\hhalpha_{g_1}\circ \psi_{g_2,g_3}$. 
On $D_3$, there exists a homeomorphism $h_3:D_3\to (e:\Delta^2:g_1g_2,g_3)$ satisfying $\varphi_1|_{D_3}=\psi_{g_1g_2,g_3}\circ h_3$. 
On $D_4$, the map $\varphi_1$ is described by 
$$\Ad \tW_{g_1g_2}(r)(1_A\otimes P_{(1-t)\delta_{g_1g_2}+t\delta_{g_1g_2g_3}}),$$
as in the case of $D_2$.  
We have similar description of $\varphi_1$ on $D_5$. 
We put $\varphi_1=\varphi$ on $(e:\Delta^2:g_1,g_2)\cup (e:\Delta^2:g_1,g_2g_3)$. 

\begin{figure}[H]
\centering
\begin{tikzpicture}
\draw (0,-4)--(3.5,2)--(-3.5,2)--(0,-4)--(0,0)--(3.5,2);
\draw (0,0)--(-3.5,2);
\draw (-3.5,2)to[out=-55,in=210](0,0);
\draw (0,0)to[out=-30,in=235](3.5,2);
\draw(0,-4)node[below]{$e$};
\draw(-3.5,2)node[left]{$g_1g_2$};
\draw(3.5,2)node[right]{$g_1g_2g_3$};
\draw(0,0)node[above]{$g_1$};
\draw(-0.7,-1.5)node{$D_1$};
\draw(0.7,-1.5)node{$D_6$};
\draw(-1.2,0.3)node{$D_2$};
\draw(1.2,0.3)node{$D_7$};
\end{tikzpicture}
\caption{}
\end{figure}
Secondly we deform $\varphi_1$ on 
$$(e:\Delta^2:g_1,g_2)\cup (e:\Delta^2:g_1,g_2g_3)\cup (g_1:\Delta^2:g_2,g_3).$$ 
Recall that $\varphi_1$ on $D_1$ and $D_6$ as in Figure 4 are compositions of suitable homeomorphisms and $\psi_{g_1,g_2}$ 
and $\psi_{g_1,g_2g_3}$ respectively, and $\varphi_1$ on $D_2$ and $D_7$ are described by 
$$\Ad \tW_{g_1}(r)(1_A\otimes P_{(1-t)\delta_{g_1}+t\delta_{g_1g_2}}),$$
$$\Ad \tW_{g_1}(r)(1_A\otimes P_{(1-t)\delta_{g_1}+t\delta_{g_1g_2g_3}}).$$
On $(g_1:\Delta^2:g_2,g_3)$, we have $\varphi_1=\hhalpha_{g_1}\circ \psi_{g_2,g_3}$. 
We can deform $\varphi_1$ in into $\varphi_2$ so that the same description is possible for $D_1'$, $D_2'$, $D_6'$, $D_7'$ 
as in Figure 5.  
\begin{figure}[H]
\centering
\begin{tikzpicture}
\draw (0,-4)--(3.5,2)--(-3.5,2)--(0,-4)--(0,0)--(3.5,2);
\draw (0,0)--(-3.5,2);
\draw (-3.5,2)to[out=-55,in=180](0,-0.5);
\draw (0,-0.5)to[out=0,in=235](3.5,2);
\draw(0,-4)node[below]{$e$};
\draw(-3.5,2)node[left]{$g_1g_2$};
\draw(3.5,2)node[right]{$g_1g_2g_3$};
\draw(0,0)node[above]{$g_1$};
\draw(-0.7,-1.5)node{$D_1'$};
\draw(0.7,-1.5)node{$D_6'$};
\draw(-1.2,0.3)node{$D_2'$};
\draw(1.2,0.3)node{$D_7'$};
\end{tikzpicture}
\caption{}
\end{figure}
We further deform $\varphi_2$ into $\varphi_3$ by applying the following deformation: 
$$P_{t_0\delta_e+t_1\delta_{g_1}+t_2\delta_{g_1g_2}+t_3\delta_{g_1g_2g_3}}\mapsto 
P_{t_0\delta_e+(1-s)t_1\delta_{g_1}+(t_2+\frac{st_1}{2})\delta_{g_1g_2}+(t_3+\frac{st_1}{2})\delta_{g_1g_2g_3}},$$
where $0\leq s\leq 1$ is a deformation parameter. 
Note that this does not deform $\varphi_2$ on the boundary. 

We further deform $\varphi_3$ into $\varphi_4$ so that $\varphi_4$ is described as follows.  
There exists a homeomorphism $h_3: D_8\to (e:\Delta^2:g_1g_2,g_3)$ such that $\varphi_4=\psi_{g_1g_2,g_3}\circ h_3$ on $D_8$.  
On $D_9$, the map $\varphi_4$ is described by 
$$\Ad \tW_{g_1}(r)(1_A\otimes P_{(1-t)\delta_{g_1g_2}+t\delta_{g_1g_2g_3}}).$$
We put $\varphi_4=\varphi_1$ on $(e:\Delta^2:g_1g_2,g_3)$. 
 \begin{figure}[H]
\centering
\begin{tikzpicture}
\draw (0,-4)--(3.5,2)--(-3.5,2)--(0,-4);
\draw[dashed](0,0)--(0,-4);
\draw[dashed](-3.5,2)--(0,0)--(3.5,2);
\draw (-3.5,2)to[out=-30,in=210](3.5,2);
\draw(0,-4)node[below]{$e$};
\draw(-3.5,2)node[left]{$g_1g_2$};
\draw(3.5,2)node[right]{$g_1g_2g_3$};
\draw(0,0)node[above]{$g_1$};
\draw(-0.5,-1.5)node{$D_8$};
\draw(0,1.5)node{$D_9$};
\end{tikzpicture}
\caption{}
\end{figure}

Now we deform $\varphi_4$ on $D_9\cup D_5\cup D_4$. 
In the following argument, homotopy of unitaries is understood after they are cut by the projection 
$1_A\otimes P_{\delta_{g_1g_2}}+1_A\otimes P_{\delta_{g_1g_2g_3}}$, and no issue of the topology of $U(M(A^s))$ occurs. 
On $D_9$, the projection path $p(t)=1_A\otimes P_{(1-t)\delta_{g_1g_2}+t\delta_{g_1g_2g_3}}$ is deformed as $\Ad\tW_{g_1}(r)(p(t))$. 
On $D_5$, the projection path $W_{g_1}p(t)W_{g_1}^*$ is deformed as 
$$\Ad (W_{g_1}(\alpha_{g_1}\otimes \Ad (\lambda_{g_1}))(\tW_{g_2})(r))(p(t)).$$ 
Note that the concatenation of the two unitary paths $\{\tW_{g_1}(r)\}_{r\in [0,1]}$ and 
$$\{W_{g_1}(\alpha_{g_1}\otimes \Ad (\lambda_{g_1}))(\tW_{g_2})(r)\}_{r\in [0,1]}$$
is homotopic to 
$$\{\tW_{g_1}(r)(\alpha_{g_1}\otimes \Ad (\lambda_{g_1}))(\tW_{g_2})(r))\}_{r\in [0,1]},$$ and its endpoint is 
$W_{g_1g_2}(u_{g_1,g_2}^*\otimes 1)$. 
On $D_4$, the projection path $p(t)$ is deformed as $\Ad \tW_{g_1g_2}(r)(p(t))$ (in the reversed direction), but we may replace it 
with 
$$\Ad (\tW_{g_1g_2}(r)(\tu_{g_1,g_2}(r)^*\otimes 1))(p(t)).$$ 
Now concatenation of the previous unitary path with the unitary path
$$\{\tW_{g_1g_2}(1-r)(\tu_{g_1,g_2}(1-r)^*\otimes 1)\}_{r\in [0,1]}$$
is homotopic to the unitary loop 
$$\{(\tu_{g_1,g_2}(r)\otimes 1)\tW_{g_1g_2}(r)^*\tW_{g_1}(r)(\alpha_{g_1}\otimes \Ad (\lambda_{g_1}))(\tW_{g_2})(r))\}_{r\in [0,1]}.$$
Cutting this by the projection $1_A\otimes P_{\delta_{g_1g_2}}+1_A\otimes P_{\delta_{g_1g_2g_3}}$, we get 
$$1_A\otimes P_{\delta_{g_1g_2}}+\partial\tu(g_1,g_2,g_3)(r)^*\otimes P_{\delta_{g_1g_2g_3}}.$$
Now we deform $\varphi_4$ into $\varphi_5$ so that its restriction on to $D_9\cup D_5\cup D_4$ is given by 
$$\Ad\left(1_A\otimes P_{\delta_{g_1g_2}}+(\partial\tu(g_1,g_2,g_3)(r)^*\otimes P_{\delta_{g_1g_2g_3}}\right)(p(t)),$$ 
and leave $\varphi_4|_{D_3\cup D_8}$ undeformed. 
Since $\varphi_5$ on $D_3$ and on $D_8$ are essentially the same, the homotopy group element we are looking for 
is determined by the restriction of $\varphi_5$ to $D_9\cup D_5\cup D_4$ by identify the lower side of $D_9$ and the lower side of $D_4$.   

From now on we identify $\delta_{g_1g_2}$ and $\delta_{g_1g_2g_3}$ with the canonical basis $e_1$, $e_2$ of $\C^2$, 
and identify the corner of $A^s$ by the projection $1_A\otimes P_{e_1}+1_A\otimes P_{e_2}$ with $M_2(A)$. 
Then the above computation shows that $\varphi_5$ restricted to $D_8\cup D_5\cup D_4$ is given by 
$$\left(
\begin{array}{cc}
c(t)^2 &c(t)s(t) \tu(g_1.g_2,g_3)(r)  \\
c(t)s(t)\tu(g_1.g_2,g_3)(r)^* &s(t)^2 
\end{array}
\right),
$$
where $c(t)=\frac{1-t}{\sqrt{(1-t)^2+t^2}}$, $s(t)=\frac{t}{\sqrt{(1-t)^2+t^2}}$. 
Let 
$$R(t)=\left(
\begin{array}{cc}
c(t) &-s(t)  \\
s(t) &c(t) 
\end{array}
\right), 
$$
$$Z(r)=\left(
\begin{array}{cc}
\tu(g_1.g_2,g_3)(r)  &0  \\
0 &1_A 
\end{array}
\right).
$$
We have to decide the homotopy group element given by 
$$\Phi(t,r)=Z(t)R(t)\left(
\begin{array}{cc}
1_A &0  \\
0 & 0
\end{array}
\right)R(t)^*Z(r)^*. 
$$ 
Since $\Phi(0,r)=1_A\otimes P_{e_1}$ and $\Phi(1,r)=1_A\otimes P_{e_2}$, 
it is more convenient for us to extend $\Phi$ to $[0,2]\times [0,1]$ 
by setting 
$$\Phi(t,r)=R(t-1)^*(1_A\otimes P_{e_2})R(t-1),$$
for $1\leq t\leq 2$.  
This map is homotopic to 
$$\Psi(t,r)=\left\{
\begin{array}{ll}
\tZ(t,r)(1_A\otimes P_{e_1})\tZ(t,r) , &\quad 0\leq t\leq 1 \\
(1_A\otimes P_{e_1}), &\quad 1\leq t\leq 2,
\end{array}
\right.
$$
where 
$$\tZ(t,r)=R(t)^*\left(
\begin{array}{cc}
Z(r) &0  \\
0 &1 
\end{array}
\right)R(t)
\left(
\begin{array}{cc}
1 &0  \\
0 &Z(r)^* 
\end{array}
\right).
$$
Note that $\{\tZ(t,\cdot)\}_{t\in [0,1]}$ is a unitary path in $M_2(A)$ from $\left(
\begin{array}{cc}
Z &0  \\
0 &Z^* 
\end{array}
\right)$ to $1_{M_2(A)}$. 
Comparing the orientation of our $[0,2]\times [0,1]/\sim$ with the usual isomorphism $\theta_{SA}:K_1(SA)\to K_0(A)$ 
(see \cite[Theorem 8.2.2]{Bl}), we finish the proof. 
\end{proof}

\begin{remark} Theorem \ref{primary} together with Remark \ref{sf} shows that Meyer's result \cite[Theorem 3.10]{Me19} 
does not hold for $M_{{\fP^\infty}}$ or $\cZ$.  
\end{remark}

\subsection{Relationship with the Dadarlat-Pennig theory}
From now on, we assume that $G$ is amenable and there exists a finite CW-complex model of the classifying space $BG$. 
We show under this condition that the semigroup $\cE_A'(G)$ is naturally isomorphic to Dadarlat-Pennig's group 
$\bE_A^1(BG)$, and in particular $\cE_A(G)$ is a group. 

Let $X$ be a compact metric space. 
Dadarlat-Pennig's cohomology $E_A^1(X)$ is the set of isomorphism classes of locally trivial continuous fields of 
$A^s$ over $X$, which is a group under tensor product over $C(X)$. 
Equivalently, it can be defined to be the set of isomorphism classes of principal $\Aut(A^s)$-bundles over $X$, 
which is further identified with the homotopy set $[X,B\Aut(A^s)]$. 
The reduced version $\bE_A^1(X)$ is defined by the set of isomorphism classes of principal $\Aut_0(A^s)$-bundles over $X$, 
or equivalently by $[X,B\Aut_0(A^s)]$. 

Let $\cP$ be a principal $\Aut_0(A^s)$-bundle over $X$, and assume that $X$ is connected.  
Then $\cP\times_{\Aut_0(A^s)}\Aut(A^s)$ is a principal $\Aut(A^s)$-bundle, which gives a natural map from $\bE_A^1(X)$ 
to $E^1_A(X)$. 
It was shown in \cite[Proposition 2.3]{DP-II} that this map is injective and its image is the kernel of 
the primary obstruction map 
$$\delta_0:E_A^1(X)\to H^1(X,\pi_0(\Aut(A^s)))=H^1(X,K_0(A)^\times).$$
In particular, we can regard $\bE_A^1(X)$ as a subgroup of $E_A^1(X)$ inheriting a group structure from $E_A^1(X)$. 

Let $\alpha$ be a $G$-action on $A^s$, and let $\cP_\alpha$ be the corresponding principal $\Aut(A^s)$-bundle over $BG$. 
The section algebra of the associated $A^s$-bundle ${\cP_\alpha}\times_{\Aut(A^s)} A^s$ over $BG$ is identified with 
$$M_\alpha=\{f\in C^b(EG,A^s);\; f(g\cdot x)=\alpha_g(f(x)),\; \forall x\in EG,\;\forall g\in G\},$$
which is a locally trivial continuous field of $A^s$ over $BG$. 

\begin{lemma} For two $G$-actions $\alpha$ and $\beta$ on $A^s$, we have 
$$[\cP_\alpha]+[\cP_\beta]=[\cP_{\alpha\otimes \beta}]$$ 
in $E_A^1(BG)$. 
\end{lemma}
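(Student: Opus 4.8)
The plan is to verify the claim by tracking the section algebras on both sides and exhibiting a natural $C(BG)$-linear isomorphism between them. Tensor product in $E_A^1(BG)$ is, by definition, the fiberwise tensor product over $C(BG)$ of the associated $A^s$-bundles, which on section algebras means $M_\alpha \otimes_{C(BG)} M_\beta$. So the content of the lemma is that $M_\alpha \otimes_{C(BG)} M_\beta \cong M_{\alpha\otimes\beta}$ as continuous fields over $BG$. Here I should be slightly careful about what ``$\alpha\otimes\beta$'' means as a $G$-action on $A^s$: a priori it is a $G$-action on $A^s\otimes A^s \cong (A\otimes A)\otimes(\K\otimes\K)$, and one uses a fixed isomorphism $A\otimes A\cong A$ (available since $A$ is strongly self-absorbing) together with an isomorphism $\K\otimes\K\cong\K$ to land back in $\Aut(A^s)$; the resulting class in $E_A^1(BG)$ is independent of these choices because inner perturbations do not change the isomorphism class of the bundle, and $A\otimes A\cong A$ is essentially unique up to approximate unitary equivalence.

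First I would set up the natural map. Given $f\in M_\alpha$ and $h\in M_\beta$, the function $x\mapsto f(x)\otimes h(x)$ on $EG$ satisfies the equivariance condition for the $G$-action $\alpha\otimes\beta$ on $A^s\otimes A^s$, since $(f\otimes h)(g\cdot x) = \alpha_g(f(x))\otimes\beta_g(h(x)) = (\alpha\otimes\beta)_g((f\otimes h)(x))$. This gives a $C^b(EG,\cdot)$-linear $*$-homomorphism from the algebraic tensor product, and it is $C(BG)$-bilinear because the $C(BG)$-action on each $M_\alpha$ is by pointwise multiplication by $G$-invariant functions on $EG$. One then checks it descends to the $C(BG)$-balanced tensor product and lands in $M_{\alpha\otimes\beta}$ after applying the fixed identification $A^s\otimes A^s\cong A^s$.

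Second, I would prove it is an isomorphism of continuous fields. The cleanest route is local: $BG$ has a finite CW model, hence a finite cover by contractible open sets $U_i$, over each of which $\cP_\alpha$ and $\cP_\beta$ are trivial, so $M_\alpha|_{U_i}\cong C(U_i)\otimes A^s$ and similarly for $\beta$, and the map above becomes, over $U_i$, the obvious identification $(C(U_i)\otimes A^s)\otimes_{C(U_i)}(C(U_i)\otimes A^s)\cong C(U_i)\otimes (A^s\otimes A^s)\cong C(U_i)\otimes A^s$, which is clearly an isomorphism. Since being an isomorphism of continuous fields is a local condition and the map is globally defined and $C(BG)$-linear, it is a global isomorphism. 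Equivalently, and more in the spirit of the excerpt, one can argue at the level of principal bundles: $\cP_{\alpha\otimes\beta}$ is obtained from $\cP_\alpha\times_{BG}\cP_\beta$ (a principal $\Aut(A^s)\times\Aut(A^s)$-bundle) by pushing forward along the ``tensor product'' homomorphism $\Aut(A^s)\times\Aut(A^s)\to\Aut(A^s)$ induced by $A^s\otimes A^s\cong A^s$, and this pushforward construction is exactly what computes the sum in $E_A^1(BG)=[BG,B\Aut(A^s)]$ under the $H$-space structure on $B\Aut(A^s)$ coming from strong self-absorption; this is already implicit in Dadarlat--Pennig's definition of the group structure.

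The main obstacle I expect is purely bookkeeping: making precise the identification of ``$\alpha\otimes\beta$ as a $G$-action on $A^s$'' and checking that the resulting class in $E_A^1(BG)$ does not depend on the auxiliary isomorphisms $A\otimes A\cong A$ and $\K\otimes\K\cong\K$, and that the group operation of $E_A^1(BG)$ is indeed realized fiberwise by $\otimes_{C(BG)}$ on section algebras. None of this is deep — it is exactly the compatibility between the semigroup structure on actions defined earlier via tensor product and the Dadarlat--Pennig group structure — but it is where the proof has to be written carefully rather than sketched.
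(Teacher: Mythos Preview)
Your proposal is correct and follows essentially the same route as the paper: reduce to showing $M_\alpha\otimes_{C(BG)}M_\beta\cong M_{\alpha\otimes\beta}$, build the natural map $f\otimes h\mapsto (x\mapsto f(x)\otimes h(x))$, and verify it is an isomorphism via local triviality. The paper's proof is just two lines (``the left-hand side is a subalgebra of the right-hand side, and equality can be shown by using a partition of unity''), so your version is considerably more detailed---in particular your discussion of the identification $A^s\otimes A^s\cong A^s$ and the independence of choices is useful bookkeeping that the paper suppresses---but the underlying argument is the same.
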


\begin{proof} It suffices to show $M_\alpha\otimes _{C(BG)}{M_\beta}=M_{\alpha\otimes \beta}$. 
The left-hand side is a subalgebra of the right-hand side, and equality can be shown by using a partition of unity. 
\end{proof}

Let $\alpha$ and $\beta$ be $G$-actions on $A^s$ via $\Aut_0(A^s)$. 
Since $\bE_A^1(BG)$ can be regarded as a subgroup of $E_A^1(BG)$, we also have   
$$[\cP_\alpha^0]+[\cP_\beta^0]=[\cP_{\alpha\otimes \beta}^0]$$ 
in $\bE_A^1(BG)$. 

The author gave a conjecture in \cite{I10} whose special case says that the map $[\alpha]\mapsto[\cP_\alpha]$ gives a bijection between 
the set of cocycle conjugacy classes of outer $G$-actions of $A^s$ and $E_A^1(BG)$. 
After partial results \cite{IM2010},\cite{IMI},\cite{IMII}, the conjecture was recently solved affirmatively by combining 
Meyer's result \cite[Theorem 3.10]{Me19} for surjectivity and Gabe-Szab\'o's result \cite[Theorem 6.2]{GS} for injectivity.

Since the primary obstruction $\delta_0([\cP_\alpha])$ is identified with the composition of $\alpha$ and the quotient map from 
$\Aut(A^s)$ to $\pi_0(\Aut(A^s))$, we have $[\cP_\alpha]\in \ker\delta_0$ if and only if $\alpha$ is via $\Aut_0(A^s)$.  
Thus every element in $\bE_A^1(BG)$ is given by $\cP_\alpha^0$ with an outer action $\alpha$ via $\Aut_0(A^s)$. 
This gives a surjective semigroup homomorphism from $\cE_A'(G)$ onto $\bE_A^1(BG)$. 

\begin{theorem} Let $A$ be a strongly self-absorbing Kirchberg algebra, and let $G$ be a countable discrete amenable group 
with a finite CW-complex model of the classifying space $BG$. 
Let $\alpha$ and $\beta$ be outer $G$-actions on $A^s$ via $\Aut_0(A^s)$. 
Then the following conditions are equivalent: 
\begin{itemize}
\item[$(1)$] $\alpha$ and $\beta$ are $KK$-trivially cocycle conjugate. 
\item[$(2)$] $\alpha$ and $\beta$ are cocycle conjugate. 
\item[$(3)$] $[\cP_\alpha^0]=[\cP_\beta^0]$ in $\bE_A^1(BG)$. 
\item[$(4)$] $[\cP_\alpha]=[\cP_\beta]$ in $E_A^1(BG)$. 
\end{itemize} 
\end{theorem}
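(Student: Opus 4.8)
The plan is to prove $(1)\Rightarrow(2)\Rightarrow(4)$, the equivalence $(3)\Leftrightarrow(4)$, and $(4)\Rightarrow(2)$ --- so that $(2),(3),(4)$ are mutually equivalent and are implied by $(1)$ --- and then the remaining implication $(2)\Rightarrow(1)$, which is the only substantial point. The implication $(1)\Rightarrow(2)$ is immediate. For $(2)\Rightarrow(4)$, a cocycle conjugacy $\theta\circ\alpha_g\circ\theta^{-1}=\Ad W_g\circ\beta_g$ (with $(W_g)$ a $\beta$-cocycle) induces in the usual way a $C(BG)$-linear isomorphism between the section algebras $M_\alpha$ and $M_\beta$, hence $[\cP_\alpha]=[\cP_\beta]$ in $E_A^1(BG)$. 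For $(3)\Leftrightarrow(4)$: since $\alpha$ and $\beta$ are via $\Aut_0(A^s)$, the primary obstructions $\delta_0([\cP_\alpha])$ and $\delta_0([\cP_\beta])$ in $H^1(BG,K_0(A)^\times)$ vanish, so both classes lie in $\ker\delta_0$, which by \cite[Proposition 2.3]{DP-II} is precisely the image of the injective homomorphism $\bE_A^1(BG)\to E_A^1(BG)$; thus $[\cP_\alpha]=[\cP_\beta]$ in $E_A^1(BG)$ is equivalent to $[\cP_\alpha^0]=[\cP_\beta^0]$ in $\bE_A^1(BG)$. Finally $(4)\Rightarrow(2)$ is the injectivity half of the Gabe-Szab\'o dynamical Kirchberg-Phillips theorem \cite[Theorem 6.2]{GS}.

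For $(2)\Rightarrow(1)$, let $\theta\circ\alpha_g\circ\theta^{-1}=\Ad W_g\circ\beta_g$ be a cocycle conjugacy. If $A\cong\cO_2$ then $\pi_0(\Aut(A^s))\cong K_0(\cO_2)^\times$ is trivial, so $\theta\in\Aut_0(A^s)$ and we are done; assume $A\cong M_{\fP^\infty}\otimes\cO_\infty$ and identify $K_0(A)$ with $\Z[1/\fP]\subset\Q$. Under $\pi_0(\Aut(A^s))\cong K_0(A)^\times$, the class $n:=[\theta]$ equals the positive element $\theta_*[1_A]_0\in K_0(A)$, hence is a positive unit $\prod_{p\in\fP}p^{k_p}$. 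It suffices to produce, for each $p$ occurring in $n$, an automorphism $\gamma\in\Aut(A^s)$ with $[\gamma]=p$ or $p^{-1}$ in $\pi_0(\Aut(A^s))$ that is itself a cocycle conjugacy of $\beta$ with $\beta$: then $\gamma\circ\theta$, together with the cocycle obtained by combining $\gamma(W_g)$ with the $\beta$-cocycle implementing $\gamma\circ\beta_g\circ\gamma^{-1}$, is again a cocycle conjugacy of $\alpha$ with $\beta$, and by choosing the exponents appropriately we strictly decrease $\sum_p|k_p|$ at each step, reaching $[\theta]=1$, i.e.\ $\theta\in\Aut_0(A^s)$, after finitely many steps.

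To build $\gamma$, fix $p\in\fP$. Since $p\in\fP$, $A$ --- and hence $A^s$ --- is $M_{p^\infty}$-absorbing, so the first-factor embedding $A^s\to A^s\otimes M_{p^\infty}$ is approximately unitarily equivalent to an isomorphism; consequently the operation $-\otimes M_{p^\infty}$ (and, trivially, $-\otimes\K$) induces the identity on $E_A^1(BG)$. Hence, transporting the $G$-action $\beta\otimes\mathrm{triv}$ on $A^s\otimes M_{p^\infty}\otimes\K$ (trivial on the last two factors) to $A^s$ through a fixed isomorphism, its bundle has class $[\cP_\beta]$ in $E_A^1(BG)$; being outer, it is cocycle conjugate to $\beta$ by $(4)\Rightarrow(2)$. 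On the other hand $\pi_0(\Aut(M_{p^\infty}\otimes\K))\cong\Z[1/p]^\times$, so we may pick $\sigma\in\Aut(M_{p^\infty}\otimes\K)$ with $\sigma_*$ equal to multiplication by $p$ (or $p^{-1}$); then $\id_{A^s}\otimes\sigma$ commutes on the nose with $\beta\otimes\mathrm{triv}$ and has $\pi_0$-class $p$ (or $p^{-1}$), and conjugating it through the cocycle conjugacy $\beta\sim\beta\otimes\mathrm{triv}$ and back gives the desired $\gamma$ (its $\pi_0$-class is unchanged since $\pi_0(\Aut(A^s))$ is abelian). The main obstacle is exactly this construction: the required $K_0$-rescaling cannot be produced inside the unital building blocks $A$ or $M_{p^\infty}$ --- whose automorphisms all act trivially on $K_0$ --- so it must come from the stabilization, and one needs that the outer action $\beta$ absorbs the trivial $G$-action on $M_{p^\infty}\otimes\K$, a fact extracted here from $M_{p^\infty}$-stability of $A^s$ together with Gabe-Szab\'o. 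The remaining points (outerness of $\beta\otimes\mathrm{triv}$; that composing a cocycle conjugacy with a self-cocycle-conjugacy of $\beta$ is again one; multiplicativity of $[\,\cdot\,]\colon\Aut(A^s)\to\pi_0(\Aut(A^s))$) are routine.
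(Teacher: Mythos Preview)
Your treatment of $(1)\Rightarrow(2)$, $(2)\Rightarrow(4)$, $(3)\Leftrightarrow(4)$ and $(4)\Rightarrow(2)$ is fine and agrees with the paper's. For the remaining implication the paper takes a different route: rather than proving $(2)\Rightarrow(1)$, it proves $(3)\Rightarrow(1)$ directly. An isomorphism $\cP_\alpha^0\cong\cP_\beta^0$ is realised by a continuous map $\Phi:EG\to\Aut_0(A^s)$ satisfying $\Phi(g\cdot x)=\beta_g\circ\Phi(x)\circ\alpha_g^{-1}$, which induces a $C(BG)$-linear isomorphism $M_\alpha\to M_\beta$; the proof of Meyer's theorem \cite[Theorem~3.10]{Me19} then shows that this yields a $KK^G$-equivalence whose underlying $KK$-class is $[\id]$ (precisely because $\Phi$ lands in $\Aut_0(A^s)$), and Gabe--Szab\'o \cite[Theorem~6.2]{GS} upgrades this to a $KK$-trivial cocycle conjugacy. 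Your approach instead starts from an arbitrary cocycle conjugacy $\theta$ and corrects its $\pi_0$-class by composing with self-cocycle-conjugacies of $\beta$ manufactured from $\id_{A^s}\otimes\sigma$ for suitable $\sigma\in\Aut(M_{p^\infty}\otimes\K)$. This is a pleasantly concrete construction, though it still relies on Meyer and Gabe--Szab\'o through the black-box step $(4)\Rightarrow(2)$, so it is not more elementary in terms of dependencies.

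There is one genuine gap. You assert that $n=\theta_*[1_A]_0$ is a \emph{positive} unit $\prod_{p\in\fP}p^{k_p}$, but $A$ is purely infinite, so $K_0(A)$ carries no order, and $K_0(A)^\times=\{\pm\prod_{p\in\fP}p^{k_p}\}$ always contains $-1$. Your construction, which only supplies self-cocycle-conjugacies $\gamma$ with $[\gamma]=p^{\pm1}$ for $p\in\fP$, does not handle this sign; in particular, for $A=\cO_\infty$ (the case $\fP=\emptyset$) there are no primes available at all and the argument as written is empty when $[\theta]=-1$. The fix is immediate: run exactly the same construction with $\cO_\infty$ in place of $M_{p^\infty}$. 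Since $A$ is $\cO_\infty$-absorbing, $\beta$ is cocycle conjugate to $\beta\otimes\mathrm{triv}$ on $A^s\otimes\cO_\infty\otimes\K$ by the same reasoning you already gave, and $\pi_0(\Aut(\cO_\infty\otimes\K))\cong K_0(\cO_\infty)^\times=\{\pm1\}$ provides $\sigma$ with $\sigma_*=-1$, yielding the missing $\gamma$ with $[\gamma]=-1$. (For $\fP\neq\emptyset$ one can alternatively note that $\Z[1/p]^\times$ already contains $-1$, so some $\sigma\in\Aut(M_{p^\infty}\otimes\K)$ with $\sigma_*=-1$ exists; but you should say so.)
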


\begin{proof} The implication from (1) to (2) is trivial. 
We already mentioned the equivalence of (2) and (4). 
The equivalence of (3) and (4) follows from \cite[Proposition 2.3]{DP-II}. 

We assume (3) and show (1) now. 
From our definition of $\cP_\alpha^0$ and $\cP_\beta^0$, an isomorphism from $\cP_\alpha^0$ to $\cP_\beta^0$ 
is given by a continuous map $\Phi:EG\to \Aut_0(A^s)$ satisfying 
$\Phi(g\cdot x)=\beta_g\circ \Phi(x)\circ \alpha_g^{-1}$ for all $x\in EG$ and all $g\in G.$ 
Indeed, with such a map, we can define an ismorphism $\cP_\alpha^0\to \cP_\beta^0$ by 
$$[(x,\gamma)]\mapsto [(x,\Phi(x)\circ \gamma)].$$
On the other hand, we can show that every isomorphism is of this form by using local trivialization. 
Pointwise application of $\Phi$ gives an isomorphim from $M_\alpha$ to $M_\beta$. 
Now the proof of \cite[Theorem 3.10]{Me19} shows that this isomorphism give rise to a $KK^G$-equivalence from  
$(A^s,\alpha)$ to $(A^s,\beta)$ whose underlying $KK$-equivalence is $[\id]$, 
and \cite[Theorem 6.2]{GS} shows that $\alpha$ and $\beta$ are $KK$-trivially cocycle conjugate. 
\end{proof}

\begin{cor} Let the assumption on $A$ and $G$ be as above.  
The map $[(\alpha,u)]\mapsto [\cP_{\hhalpha}^0]$ gives a group isomorphism from $\cE_A(G)$ onto $\bE_A^1(BG)$. 
\end{cor}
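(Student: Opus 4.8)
The plan is to build the isomorphism by composing three facts already established. The first is the lemma identifying $\cE_A(G)$ with $\cE_A'(G)$ through $[(\alpha,u)]\mapsto[(\alpha\otimes\id_\K,u\otimes1)]$. The second is the surjective semigroup homomorphism $\cE_A'(G)\to\bE_A^1(BG)$, $[\gamma]\mapsto[\cP_\gamma^0]$, noted just before the preceding theorem. The third is the preceding theorem itself: the equivalence of its conditions $(1)$ and $(3)$ shows that this homomorphism is both well defined and injective, hence an isomorphism.

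First I would confirm that the second dual action $\hhalpha$ of an outer cocycle action $(\alpha,u)$ of $G$ on $A$ represents the image of $[(\alpha,u)]$ in $\cE_A'(G)$. Since each $\alpha_g$ is unital, $(\alpha_g\otimes\id_\K)_*[1_A]_0=[1_A]_0$, and $\Ad V_g$ acts trivially on $K_0$ because $V_g$ is a multiplier unitary; therefore $(\hhalpha_g)_*[1_A]_0=[1_A]_0$, so $\hhalpha_g\in\Aut_0(A^s)$ by \cite[Theorem 2.5]{DP-I}, and $\hhalpha$ is outer because $(\alpha,u)$ is. The relation $V_g(\alpha_g\otimes\id)(V_h)(u(g,h)\otimes1)V_{gh}^{-1}=1$ recorded after Eq.(\ref{sd}) says precisely that $\hhalpha$ is a genuine $G$-action equivalent to the cocycle action $(\alpha\otimes\id_\K,u\otimes1)$; hence, under the isomorphism of the lemma, $[(\alpha,u)]$ is sent to $[\hhalpha]$, and composing with $[\gamma]\mapsto[\cP_\gamma^0]$ gives $[(\alpha,u)]\mapsto[\cP_{\hhalpha}^0]$.

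It then remains to verify that $[\gamma]\mapsto[\cP_\gamma^0]$ is an isomorphism $\cE_A'(G)\to\bE_A^1(BG)$. It is a well-defined semigroup homomorphism because $KK$-trivially cocycle conjugate actions yield the same class in $\bE_A^1(BG)$ by the implication $(1)\Rightarrow(3)$ of the preceding theorem, and because of the additivity lemma $[\cP_\alpha^0]+[\cP_\beta^0]=[\cP_{\alpha\otimes\beta}^0]$; it is surjective by the remark that every class in $\bE_A^1(BG)$ is of the form $[\cP_\gamma^0]$ for an outer $G$-action $\gamma$ via $\Aut_0(A^s)$; and it is injective by the implication $(3)\Rightarrow(1)$ of the preceding theorem together with the definition of $\cE_A'(G)$. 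Composing with the lemma's isomorphism produces a semigroup isomorphism $\cE_A(G)\to\bE_A^1(BG)$; since $\bE_A^1(BG)$ is a group, $\cE_A(G)$ is a group and the map is a group isomorphism.

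I do not expect any serious obstacle: the corollary is essentially bookkeeping on top of the lemma and the preceding theorem. The only points requiring a little care are, on the surjectivity side, replacing an arbitrary cocycle $G$-action representing a class of $\bE_A^1(BG)$ by an equivalent genuine action before applying the lemma, and, in the identification above, checking that the explicitly defined $\hhalpha$ really lies in $\cE_A'(G)$ and matches the abstract image of $[(\alpha,u)]$ — both dealt with here.
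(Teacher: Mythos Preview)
Your proposal is correct and matches the paper's intent: the corollary is stated there without proof, as an immediate consequence of the lemma identifying $\cE_A(G)\cong\cE_A'(G)$ and the preceding theorem's equivalence $(1)\Leftrightarrow(3)$, together with the surjectivity remark before that theorem. You have simply made explicit the bookkeeping the paper leaves to the reader, including the check that $\hhalpha$ lands in $\Aut_0(A^s)$ and represents the correct class in $\cE_A'(G)$.
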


Note that if $\gamma$ is an outer $G$-action on $A$, then $\cP_{\gamma\otimes \id_\K}^0$ is a trivial bundle as $\Aut(A)$ 
is contractible (see \cite[Theorem 2.3]{DP-I}). 
Thus $[(\gamma,1)]$ is the unit of $\cE_A(G)$. 

We denote by $\delta_1$ the primary obstruction map $\delta_1: \bE_A^1(BG)\to H^3(BG,K_0(A))$  
(this definition is a little different from $\delta_1$ in \cite[Definition 4.6]{DP-I}). 
Theorem \ref{primary} says that $\delta_1([\cP^0_{\hhalpha}])=\kappa^3((\alpha,u))$ for $[(\alpha,u)]\in \cE_A(G)$. 
Note that $\delta_1$ is surjective (see the proof of \cite[Proposition 3.9]{DMP}). 

\begin{cor} Let the assumption on $A$ and $G$ be as above. 
Then $\kappa^3$ induces a surjective group homomorphism from $\cE_A(G)$ onto $H^3(G,K_0(A))$. 
\end{cor}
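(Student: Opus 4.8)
The plan is to deduce this corollary formally from the machinery already in place, with essentially no new computation. First I would invoke the preceding corollary, which says that $[(\alpha,u)]\mapsto[\cP^0_{\hhalpha}]$ is a group isomorphism $\cE_A(G)\xrightarrow{\ \sim\ }\bE_A^1(BG)$; in particular $\cE_A(G)$ is a group, and the tensor-product operation on cocycle actions corresponds, under this map, to the tensor product of principal $\Aut_0(A^s)$-bundles over $BG$ (equivalently, to the group operation of $\bE_A^1(BG)$).

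Next I would observe that the primary obstruction map $\delta_1\colon \bE_A^1(BG)\to H^3(BG,K_0(A))$ is itself a group homomorphism. This is because the structure group $\Aut_0(A^s)$ is $2$-connected --- its first non-trivial homotopy group is $\pi_2(\Aut_0(A^s))\cong K_0(A)$ --- so over the $2$-skeleton $B_2G$ a section exists and, for a tensor product of two such bundles, sections on $B_2G$ can be chosen compatibly from the two tensor factors; the resulting difference cocycle in $Z^3(G,K_0(A))$ is then the sum of the two difference cocycles, this additivity being the standard behaviour of primary obstructions of an $H$-space-valued classifying map (it is also implicit in the Dadarlat--Pennig framework). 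Combining this with Theorem \ref{primary}, which gives $\delta_1([\cP^0_{\hhalpha}])=\kappa^3(\alpha,u)$, I conclude that $\kappa^3\colon\cE_A(G)\to H^3(G,K_0(A))$ is exactly the composition of the isomorphism $\cE_A(G)\cong\bE_A^1(BG)$ with $\delta_1$, after the canonical identification $H^3(BG,K_0(A))=H^3(G,K_0(A))$; hence $\kappa^3$ is a group homomorphism.

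Surjectivity is then immediate: the isomorphism $\cE_A(G)\cong\bE_A^1(BG)$ is onto, and $\delta_1$ is surjective by the argument in the proof of \cite[Proposition 3.9]{DMP}, so their composition $\kappa^3$ is surjective as well. The only point that deserves genuine care is the homomorphism property of $\delta_1$, i.e.\ additivity of the primary obstruction under tensor product of bundles; but since $\Aut_0(A^s)$ is $2$-connected this is routine obstruction theory, and everything else in the argument is bookkeeping built on the previous corollary and Theorem \ref{primary}.
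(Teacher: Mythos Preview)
Your proof is correct and follows essentially the same approach as the paper: the corollary is deduced immediately from the preceding corollary identifying $\cE_A(G)\cong\bE_A^1(BG)$, from Theorem \ref{primary} identifying $\kappa^3$ with $\delta_1$ under this isomorphism, and from the surjectivity of $\delta_1$ noted just before the corollary (citing \cite[Proposition 3.9]{DMP}). You spell out the additivity of $\delta_1$ under tensor product in slightly more detail than the paper, which simply takes this for granted as part of the Dadarlat--Pennig framework, but otherwise the arguments coincide.
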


\begin{remark}\label{DD} Let $\mu\in Z^2(G,\T)$, and let $(\id,\mu)$ be a cocycle action of $G$ on $\C$. 
Althoug it is not common to include $\C$ as a strongly self-absorbing C$^*$-algebra, we can still apply Theorem \ref{primary} 
to this cocycle action. 
Then $V_g$ in Eq.(\ref{sd}) is a projective unitary representation with the cocycle $\mu$, and $\Ad V_g$ gives a $G$-action on $\K$. 
We denote by $C_\mu$ the corresponding locally trivial continuous field of $\K$. 
Theorem \ref{primary} shows that its Dixmier-Douady class is $\kappa^3(\id,\mu)=\partial [\mu]$, 
where $\partial: H^2(G,\T)\to H^3(G,\Z)$ is the connecting map of the cohomology long exact sequence arising from the coefficient short exact sequence 
\begin{equation}\label{ZTR}
0\to\Z\to\R\to\T\to0.
\end{equation}
\end{remark}

\begin{lemma}\label{numcocy} 
Let $\mu\in Z^2(G,\T)$ with $j_*\partial [\mu]=0$ in $H^3(G,K_0(A))$, where $j:\Z\to K_0(A)$ is the inclusion map. 
Then for any $[(\alpha,u)]\in \cE_A(G)$, we have $[(\alpha,u)]=[(\alpha,\mu u)]$. 
\end{lemma}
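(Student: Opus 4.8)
The statement asserts that twisting a cocycle action $(\alpha,u)$ by a scalar $2$-cocycle $\mu$ with $j_*\partial[\mu]=0$ does not change the class in $\cE_A(G)$. The natural strategy is to use the fact that $\cE_A(G)$ is a group (established in the preceding corollary) together with the additivity and surjectivity results for $\kappa^3$. First I would record that $(\alpha,\mu u)$ is cocycle conjugate, via the obvious identification, to the tensor product $(\alpha,u)\otimes(\id_\C,\mu)$ — here $(\id_\C,\mu)$ is the scalar cocycle action of $G$ on $\C$ from Remark \ref{DD}, and tensoring with it literally multiplies the $2$-cocycle by $\mu$ while leaving the automorphism part untouched (since $\Aut(\C)$ is trivial, $\alpha\otimes\id_\C=\alpha$ after identifying $A\otimes\C=A$). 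Strictly speaking one should first stabilize: working with the associated actions on $A^s$, $(\alpha\otimes\id_\K,u\otimes 1)\otimes(\Ad V^\mu, 1)$ on $A^s\otimes\K\cong A^s$, where $V^\mu_g$ is the projective representation with cocycle $\mu$ from Eq.(\ref{sd}) applied to $(\id_\C,\mu)$; this is the content that $\C$, while not strongly self-absorbing, still fits the formalism.

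**Key steps in order.** (1) Identify $[(\alpha,\mu u)] = [(\alpha,u)] + [(\id_\C,\mu)]$ in $\cE_A(G)$, using the tensor-product semigroup structure and the observation above. (2) Invoke that $\cE_A(G)$ is a group, so it suffices to show $[(\id_\C,\mu)]$ is the unit of $\cE_A(G)$, i.e. that the associated $G$-action $\Ad V^\mu$ on $A^s$ (which is $\id_\C\otimes\Ad V^\mu$ after identifying $\K(\ell^2 G)\otimes$ something — really one absorbs it into a corner of $A^s$) is $KK$-trivially cocycle conjugate to a genuine outer action. (3) Apply the theorem characterizing the unit: an element of $\bE_A^1(BG)$ is zero iff the corresponding bundle is trivial. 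The bundle $\cP^0$ attached to $(\id_\C,\mu)$ has primary obstruction $\delta_1 = \kappa^3(\id_\C,\mu) = j_*\partial[\mu]$ by Theorem \ref{primary} and Remark \ref{DD}, and this vanishes by hypothesis. So the primary obstruction dies; but to conclude the bundle is actually trivial one needs the higher obstructions to vanish too. (4) Here is where I expect to use the structure of $\bE_A^1(BG)$ more seriously: since $A\in\cD_{pi}$ and $\mu$ is scalar, the bundle $C_\mu\otimes 1$ lies in the image of $E^1_{\C}(BG) = $ (ordinary Dixmier–Douady cohomology $H^3(BG,\Z)$ essentially, via $\K$) under the map induced by $\C\hookrightarrow A$; equivalently $[(\id_\C,\mu)]$ is the image of $\partial[\mu]\in H^3(G,\Z)$ under the natural map $H^3(G,\Z)\to\cE_A(G)$ coming from $j:\Z\to K_0(A)$. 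One shows this map factors through $\kappa^3$ in the sense that its image is detected entirely by $\kappa^3$ composed with $j_*$, or more precisely that the element $[(\id_\C,\mu)]\in\cE_A(G)\cong\bE^1_A(BG)$ is determined by its image under $\delta_1$. This last point is exactly the content that for the scalar-induced subgroup the only obstruction is the primary one — which should follow because the continuous field $C_\mu$ is induced from a $\K$-bundle, so after tensoring with $A^s$ its classifying map into $B\Aut_0(A^s)$ factors through $B\PU(\mathcal H)\simeq K(\Z,3)$, and the composite $K(\Z,3)\to B\Aut_0(A^s)$ is determined on homotopy by $j:\pi_3 = \Z \to K_0(A) = \pi_2(\Aut_0(A^s))$.

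**Main obstacle.** The genuinely delicate step is (4): showing that $j_*\partial[\mu]=0$ forces $[(\id_\C,\mu)]=0$ in $\cE_A(G)$, not merely that its primary obstruction vanishes. One wants the conclusion "primary obstruction zero $\Rightarrow$ class zero" for this particular family of bundles, which is not true for arbitrary elements of $\bE^1_A(BG)$ (there can be higher obstructions, e.g. related to $\pi_3(\Aut_0(A^s))$ and torsion in $K_1$, though here $K_1(A)=0$ helps). The cleanest route is probably to argue that the classifying map for $C_\mu$-induced bundles factors through an Eilenberg–MacLane space $K(\Z,3)$, and the induced map $K(\Z,3)\to B\Aut_0(A^s)$ — whose effect on the fundamental class is precisely $j$ — is null-homotopic on $BG$ when $j\circ(\partial[\mu])$ vanishes, because $BG$ is a finite complex and $[BG, B\Aut_0(A^s)]$ restricted to the image of $[BG,K(\Z,3)] = H^3(BG,\Z)$ is computed by that single homomorphism together with the finiteness of $BG$. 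I would expect to lean on the Dadarlat–Pennig computation of $\pi_*(B\Aut_0(A^s))$ and the rational/torsion structure of $\bE^1_A$ to push this through; alternatively, a direct $C^*$-algebraic argument producing an explicit cocycle conjugacy between $(\id_\C,\mu)\otimes(\alpha,u)$ and $(\alpha,u)$ when $j_*\partial[\mu]=0$, using that a scalar cocycle $\mu$ with $\partial[\mu]$ mapping to zero in $K_0(A)$ can be "absorbed" by a suitable path of unitaries inside $A$, would sidestep the bundle-theoretic machinery. I would first attempt the bundle-theoretic version since all the needed apparatus ($\cE_A(G)\cong\bE^1_A(BG)$, Theorem \ref{primary}, Remark \ref{DD}) is already in place, and only fall back to a hands-on construction if the higher-obstruction vanishing proves stubborn.
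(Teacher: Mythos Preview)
Your strategy coincides with the paper's: both reduce to showing that the scalar cocycle contributes trivially, i.e.\ that the $\K$-bundle $C_\mu$ becomes trivial in $\bE_A^1(BG)$ after tensoring with $A$. Your steps (1)--(3) are correct (the outerness caveat in (1) is harmless once one passes to continuous fields, which is what the paper does by writing $C_{(\alpha,\mu u)}\cong C_{(\alpha,u)}\otimes_{C(BG)}C_\mu$ directly). The incomplete point is exactly where you flag it, step (4).

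The paper closes step (4) not by abstract obstruction theory but by a short case analysis exploiting the concrete form of $A\in\cD_{pi}$. For $A=\cO_2$ there is nothing to prove. For $A=\cO_\infty$, the map $j:\Z\to K_0(\cO_\infty)=\Z$ is the identity, so $j_*\partial[\mu]=0$ already gives $\partial[\mu]=0$; the Dixmier--Douady class of $C_\mu$ is zero and $C_\mu$ is trivial. For $A=M_{\fP^\infty}\otimes\cO_\infty$ with $\fP\neq\emptyset$, the hypothesis $j_*\partial[\mu]=0$ together with the long exact sequence for $0\to\Z\xrightarrow{j}K_0(A)\to\tK_0(A)\to 0$ shows that $\partial[\mu]$ lifts to $H^2(G,\tK_0(A))$; since $BG$ is a finite complex and $\tK_0(A)=\bigoplus_{p\in\fP}\Z[1/p]/\Z$ is a direct limit of finite $p$-groups, this forces $\partial[\mu]$ to be annihilated by some product $p_1^{m_1}\cdots p_l^{m_l}$ with $p_i\in\fP$. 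Then one invokes \cite[Theorem~2.11]{DP-II}, which says precisely that a $\K$-bundle whose Dixmier--Douady class is $\fP$-torsion becomes trivial after tensoring with $M_{\fP^\infty}$, giving $C_\mu\otimes M_{\fP^\infty}\cong C(BG)\otimes M_{\fP^\infty}$ and hence $C_{(\alpha,\mu u)}\cong C_{(\alpha,u)}$. Your homotopy-theoretic route through $K(\Z,3)\to B\Aut_0(A^s)$ would ultimately require this same input: that map is not governed by $\pi_3$ alone, and the reason the composite with $BG\to K(\Z,3)$ is null-homotopic under the hypothesis is exactly the content of the Dadarlat--Pennig theorem, not a formal consequence of $j_*\partial[\mu]=0$.
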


\begin{proof} If $A=\cO_2$, we have nothing to show. 

We assume $A=M_{\fP^\infty}\otimes \cO_\infty$ with possibly empty $\fP$. 
We denote by $C_{(\alpha,\mu)}$ the continuous field of $A^s$ over $BG$ corresponding to $(\alpha,u)$. 
Then we have 
$$C_{(\alpha,\mu u)}\cong C_{(\alpha,u)}\otimes_{C(BG)}C_\mu.$$
If $A=\cO_\infty$, we have $\partial[\mu]=0$, and the Dixmier-Douady class of $C_\mu$ is trivial. 
Thus $C_{(\alpha,\mu u)}\cong C_{(\alpha,u)}$. 

Assume $\fP\neq \emptyset$. 
Note that the condition $j_*\partial [\mu]=0$ implies that $\partial[\mu]$ comes from an element in $H^2(G,\tK_0(A))$. 
Since $BG$ is a finite CW-complex, we have 
$$H^2(BG,\tK_0(M_{\fP^\infty}\otimes \cO_\infty))=\bigoplus_{p\in \fP}\varinjlim_{m} H^2(BG,\Z_{p^m}).$$
Thus there exist primes $p_1,p_2,\cdots, p_l\in \fP$ and natural numbers $m_1,m_2,\cdots, m_l$ satisfying  
$$p_1^{m_1}p_2^{m_2}\cdots p_l^{m_l}\partial [\mu]=0.$$ 
Now \cite[Theorem 2.11]{DP-II} shows $C_\mu\otimes M_{\fP^\infty}\cong C(BG)\otimes M_{\fP^\infty}$, 
and we get $C_{(\alpha,\mu u)}\cong C_{(\alpha,u)}$, and $[(\alpha,\mu u)]=[(\alpha,u)]$. 
\end{proof}

\subsection{Structure of $\cF_A(G)$}
Recall that $\tob:\cF_A(G)\to H^3(G,K^\#_0(A))$ is a semigroup homomorphism, and 
$f:\cE_A(G)\to \cF_A(G)$ is a forgetful functor map. 
We identify $\cE_A(G)$ with $\bE_A^1(BG)$ and we also write $f:\bE_A^1(BG)\to \cF_A(G)$. 
Recall that $\delta_1:\bE_A^1(BG)\to H^3(BG,K_0(A))$ is the primary obstruction map. 

Our main concern in this subsection is the following conjecture. 

\begin{conjecture}\label{C4} Let $A\in \cD_{pi}$, and let $G$ be a countable discrete amenable group with a finite CW-complex model 
of the classifying space $BG$. 
Then following hold:
\begin{itemize}
\item[$(1)$] $\cF_A(G)$ is a group. 
\item[$(2)$] The following sequence is exact: 
$$
\begin{tikzcd}
0\arrow[r]&\ker\delta_1\arrow[r,"f"]&\cF_A(G) \arrow[r,"\tob"]& H^3(G,K^\#_0(A) )\arrow[r]&0
\end{tikzcd}.
$$
\end{itemize}
\end{conjecture}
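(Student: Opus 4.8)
The plan is to verify the two parts of Conjecture \ref{C4} together by exhibiting the needed inverses and exactness statements along the short exact sequence
\begin{equation*}
0\to K_0(A)\xrightarrow{j_A}K_0^\#(A)\xrightarrow{\ev_1}\T\to 0,
\end{equation*}
using the already-established identification $\cE_A(G)\cong \bE_A^1(BG)$ together with the Dadarlat--Pennig theory. First I would analyse the image of $f:\ker\delta_1\to\cF_A(G)$. By Theorem \ref{primary} we have $\delta_1([\cP^0_{\hhalpha}])=\kappa^3(\alpha,u)$, and by the discussion after Definition \ref{deftob}, when $\ob(\alpha)=0$ one has ${j_A}_*\kappa^3(\talpha,u)=\tob(\alpha)$; hence $\tob\circ f$ factors as $j_A$ applied to $\delta_1$, so $\tob\circ f=0$ on $\ker\delta_1$ and $f(\ker\delta_1)\subseteq\ker(\tob)$. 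The reverse inclusion $\ker(\tob)\subseteq f(\ker\delta_1)$ is where the cocycle-action machinery enters: given a $G$-kernel $\alpha$ with $\tob(\alpha)=0$, one first deduces $\ob(\alpha)={\ev_1}_*\tob(\alpha)=0$, so $\alpha$ lifts to a cocycle action $(\talpha,u)$ with ${j_A}_*\kappa^3(\talpha,u)=\tob(\alpha)=0$; since $j_A$ is injective this forces $\kappa^3(\talpha,u)=0$, i.e. $[(\talpha,u)]\in\ker\delta_1$, and $f$ of it is $[\alpha]$. Injectivity of $f$ on $\ker\delta_1$ should follow from the equivalences already proved (two outer cocycle actions are cocycle conjugate iff their associated $\bE^1_A$-classes agree) combined with the observation that $KK$-trivially cocycle conjugate cocycle actions whose underlying $G$-kernels are conjugate must already be cocycle conjugate — this last point is precisely where Lemma \ref{numcocy} is used, to absorb the ambiguity of the $2$-cocycle $\mu$ relating two liftings of conjugate $G$-kernels.

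Next I would prove surjectivity of $\tob:\cF_A(G)\to H^3(G,K_0^\#(A))$. Here I would build, for a given class in $H^3(G,K_0^\#(A))$, a $G$-kernel realizing it by splitting the target along $\ev_1$: lift the image in $H^3(G,\T)$ to an anomalous action (a $G$-kernel with prescribed $\ob$) using a Jones-type construction, e.g. the argument behind Theorem \ref{JK} adapted to the strongly self-absorbing Kirchberg algebra $A$ — this uses that $A$ absorbs $\cO_2$ or is built from $M_{\fP^\infty}\otimes\cO_\infty$, and that $A\otimes A\cong A$ lets us tensor constructions together. The remaining ``$K_0(A)$-part'' of the class, i.e. the indeterminacy once $\ob$ is fixed, is realized by tensoring with an outer cocycle action whose $\kappa^3$ equals the desired element of $H^3(G,K_0(A))$; the existence of such cocycle actions is exactly the surjectivity of $\delta_1:\bE^1_A(BG)\to H^3(G,K_0(A))$ established via \cite{DMP}. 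Additivity of $\tob$ under tensor products, together with the long exact cohomology sequence coming from $0\to K_0(A)\to K_0^\#(A)\to\T\to 0$, then shows these two pieces combine to hit an arbitrary prescribed class — modulo keeping careful track of the connecting map $H^2(G,\T)\to H^3(G,K_0(A))$, which is why one needs the $K_0(A)$-valued freedom to correct it.

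Finally, part (1), that $\cF_A(G)$ is a group, would follow once surjectivity of $\tob$ and exactness at $\cF_A(G)$ are in place: every element of $\cF_A(G)$ is then expressible via the extension datum, and one constructs inverses by inverting separately in $\ker\delta_1$ (which is a group, being a subgroup of $\bE^1_A(BG)$) and in $H^3(G,K_0^\#(A))$ (an abelian group), then lifting back. Concretely, given $[\alpha]$, choose $[\beta]\in\cF_A(G)$ with $\tob(\beta)=-\tob(\alpha)$; then $\tob(\alpha\otimes\beta)=0$, so $[\alpha\otimes\beta]=f[\gamma]$ for some $[\gamma]\in\ker\delta_1$, and $-[\gamma]$ in that group yields, via $f$ and further tensoring, a two-sided inverse of $[\alpha]$.

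\textbf{Main obstacle.} The hardest step is the surjectivity of $\tob$ in the presence of the connecting map: realizing an arbitrary class in $H^3(G,\T)$ as $\ob(\alpha)$ on the specific algebra $A\in\cD_{pi}$ (not merely on $\cO_2$ or on some auxiliary Kirchberg algebra), and then showing that the residual $H^2(G,\R/\dots)$-ambiguity is absorbed by the $\kappa^3$-realization of cocycle actions. This forces a careful compatibility argument between the Jones/anomalous-action construction and the Dadarlat--Pennig obstruction theory, precisely the interplay already flagged in Remark \ref{injective} and exploited in Lemma \ref{numcocy}; I expect this to be where most of the real work lies, whereas injectivity of $f$ on $\ker\delta_1$ and the group structure are comparatively formal once that is settled.
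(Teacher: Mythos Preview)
The statement is a \emph{conjecture}, and the paper does not prove it in full generality; it establishes exactness at $\ker\delta_1$ and at $\cF_A(G)$ in general (Lemma~\ref{kernel}), proves the group structure for poly-$\Z$ groups (and for $\cO_\infty$ more broadly) via an opposite-algebra argument, and proves surjectivity of $\tob$ only for $G=\Z^n$ and for poly-$\Z$ groups with Hirsch length $\leq 5$ (Theorem~\ref{main}), by an induction using the Lyndon--Hochschild--Serre spectral sequence. Your proposal, by contrast, claims a full proof, and it contains genuine gaps.

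\textbf{Gap in exactness at $\cF_A(G)$.} You write: ``since $j_A$ is injective this forces $\kappa^3(\talpha,u)=0$.'' This is false: injectivity of $j_A:K_0(A)\to K_0^\#(A)$ on coefficients does not imply injectivity of ${j_A}_*$ on $H^3$. The kernel of ${j_A}_*:H^3(G,K_0(A))\to H^3(G,K_0^\#(A))$ is the image of the connecting map $\partial_A:H^2(G,\T)\to H^3(G,K_0(A))$, which is nonzero in general. The paper's Lemma~\ref{kernel} handles this correctly: from ${j_A}_*\kappa^3(\talpha,u)=0$ one deduces only that $\kappa^3(\talpha,u)=j_*\partial[\mu]$ for some $\mu\in Z^2(G,\T)$, and then one \emph{modifies} the lifting to $(\talpha,\mu^{-1}u)$ to kill $\kappa^3$. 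You invoke Lemma~\ref{numcocy} only for injectivity of $f$, but it is equally essential here.

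\textbf{Gap in surjectivity of $\tob$.} Your plan is to realize ${\ev_1}_*(c)\in H^3(G,\T)$ as $\ob(\alpha)$ by a Jones-type construction ``adapted to $A$.'' This does not work for $A\neq\cO_2$. The argument behind Theorem~\ref{JK} produces a $G$-kernel on some Kirchberg algebra and then tensors by $\cO_2$; the result lives on $\cO_2$, not on $A=M_{\fP^\infty}\otimes\cO_\infty$. There is no known general mechanism to realize a prescribed class in the image of ${\ev_1}_*$ as $\ob$ of a $G$-kernel on that specific $A$, and indeed the paper's Theorems~\ref{finite Cuntz} and~\ref{infinite Cuntz} show that realizability of $\ob$ is genuinely constrained by $A$. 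The paper's approach to surjectivity is entirely different: it decomposes $G=N\rtimes\langle\xi\rangle$, uses the spectral-sequence exact sequence $0\to H^1(\Z,H^2(N,M))\to H^3(G,M)\to H^3(N,M)^\Z\to 0$, realizes the first term via an explicit cocycle-conjugacy construction (Lemmas preceding Theorem~\ref{main}), and handles the second term by induction on Hirsch length. That induction stalls at $h(G)=5$ precisely because $\ker\delta_1$ can be nontrivial in higher dimensions; this is why the conjecture remains open.

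\textbf{Circularity in part (1).} Your argument for the group structure presupposes surjectivity of $\tob$, which you have not established. The paper proves (1) independently: it shows that $[\id_A\otimes\gamma]$ is a unit using absorption (Proposition~\ref{zero}), and produces inverses via the opposite algebra, observing that $(\talpha\otimes J\talpha J^{-1},\,u\otimes J(u)^*)$ is a genuine cocycle action whose class in $\cE_A(G)$ can then be inverted.
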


We first establish the exactness at $\ker\delta_1$ and $\cF_A(G)$ in (2) in full generality. 

\begin{lemma}\label{kernel} The restriction of $f$ to $\ker \delta_1$ is injective, and 
$$\ker \tob=f(\ker \delta_1).$$ 
\end{lemma}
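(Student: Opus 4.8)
The statement asserts two things about the forgetful map $f:\cE_A(G)=\bE_A^1(BG)\to\cF_A(G)$: that it is injective on $\ker\delta_1$, and that its image on $\ker\delta_1$ is exactly $\ker\tob$. Recall $\delta_1([\cP^0_{\hhalpha}])=\kappa^3(\alpha,u)$ by Theorem \ref{primary}, and that for a cocycle action $(\alpha,u)$ with $\ob=0$ we have ${j_A}_*(\kappa^3(\alpha,u))=\tob(f([\alpha,u]))$, where $j_A:K_0(A)\to K^\#_0(A)$ is the inclusion in the short exact sequence Eq.\eqref{exse}. So the condition $[(\alpha,u)]\in\ker\delta_1$ means $\kappa^3(\alpha,u)=0$, and then automatically $\tob(f([\alpha,u]))={j_A}_*(0)=0$, giving the inclusion $f(\ker\delta_1)\subseteq\ker\tob$ for free. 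The real content is the reverse inclusion and the injectivity.

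For the reverse inclusion $\ker\tob\subseteq f(\ker\delta_1)$: start with a $G$-kernel $\alpha$ with $\tob(\alpha)=0$. Since ${\ev_1}_*\tob(\alpha)=\ob(\alpha)$, we get $\ob(\alpha)=0$, so $\alpha$ lifts to a genuine cocycle action $(\talpha,u)$ on $A$, and then ${j_A}_*(\kappa^3(\talpha,u))=\tob(\alpha)=0$. Because $A\in\cD_{pi}$ we have $K_0(A)\subseteq\R$ (in particular $K_0(A)$ is torsion-free and $j_A$ is the inclusion of a subgroup), so from the coefficient long exact sequence of Eq.\eqref{exse} — or more directly from the fact that $j_A$ is split-injective after a suitable argument, or from injectivity of ${j_A}_*$ on $H^3$ — I would conclude $\kappa^3(\talpha,u)=0$. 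Here one must be a little careful: ${j_A}_*$ on $H^3(G,-)$ need not be injective in general, so the cleanest route is to use the freedom in choosing the cocycle action. That is, since $\tob(\alpha)=0$ exactly, I can choose the lifting $(\talpha,u)$ and the paths $\tu(g,h)$ so that $\widetilde\omega(g,h,k)$ is the \emph{trivial} element of $K^\#_0(A)$, i.e. a null-homotopic loop; but a null-homotopic loop in $U(A)$ gives the zero class in $K_0(A)$, so with this choice $\kappa^3(\talpha,u)=0$ on the nose. Hence $[(\talpha,u)]\in\cE_A(G)$ satisfies $\delta_1([(\talpha,u)])=\kappa^3(\talpha,u)=0$, i.e. lies in $\ker\delta_1$, and $f([(\talpha,u)])=[\alpha]$. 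A subtlety to check: that $(\talpha,u)$ can indeed be taken outer so that it represents an element of $\cE_A(G)$ — this should follow since $\alpha$ is injective as a $G$-kernel, so each $\talpha_g$ for $g\neq e$ is outer.

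For injectivity of $f|_{\ker\delta_1}$: suppose $(\alpha,u)$ and $(\beta,v)$ are outer cocycle actions with $\kappa^3(\alpha,u)=\kappa^3(\beta,v)=0$ and $f([\alpha,u])=f([\beta,v])$ in $\cF_A(G)$, i.e. the underlying $G$-kernels are conjugate. After transporting by that conjugating isomorphism, I may assume $\alpha$ and $\beta$ induce the same $G$-kernel, so $\beta_g=\Ad w_g\circ\alpha_g$ for unitaries $w_g$, and then $v(g,h)$ and $w_g\alpha_g(w_h)u(g,h)w_{gh}^*$ differ by a scalar $\mu(g,h)\in\T$; one checks $\mu\in Z^2(G,\T)$ and that $(\beta,v)$ is equivalent (as a cocycle action) to $(\alpha,\mu u)$. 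Comparing $\kappa^3$: $\kappa^3(\alpha,\mu u)=\kappa^3(\alpha,u)+\partial[\mu]=\partial[\mu]$ where $\partial:H^2(G,\T)\to H^3(G,\Z)\hookrightarrow H^3(G,K_0(A))$ is as in Remark \ref{DD}, composed with $j:\Z\to K_0(A)$. So $\kappa^3(\beta,v)=0$ forces $j_*\partial[\mu]=0$ in $H^3(G,K_0(A))$. Now Lemma \ref{numcocy} applies and gives $[(\alpha,\mu u)]=[(\alpha,u)]$ in $\cE_A(G)$, hence $[(\beta,v)]=[(\alpha,u)]$, proving injectivity.

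\textbf{Main obstacle.} The delicate point is the reverse inclusion: being careful that $\tob(\alpha)=0$ genuinely lets one choose the cocycle-action lifting with $\kappa^3=0$ on the nose (rather than merely $\tob=0$, which a priori only kills the image under ${j_A}_*$ — and ${j_A}_*$ on $H^3$ is not obviously injective without extra hypotheses on $G$ or $K_0(A)$). The resolution is to exploit the definition of $\tob$ via the explicit cocycle $\widetilde\omega$ in $K^\#_0(A)$: vanishing of the cohomology class means $\widetilde\omega$ is a coboundary, and one uses the standard perturbation of the lifting $(\talpha,u)$ (replacing $\talpha_g$ by $\Ad v_g\circ\talpha_g$ and adjusting $u$ and the paths) to absorb that coboundary and arrive at a cocycle action whose $\widetilde\omega$-cocycle is pointwise trivial in $K^\#_0(A)$, which then forces the corresponding $K_0(A)$-valued cocycle for $\kappa^3$ to vanish identically. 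Making this perturbation argument precise — tracking how $\kappa^3$ transforms under the equivalence moves, exactly as in the well-definedness discussion for $\tob$ — is the one place that needs genuine care rather than bookkeeping.
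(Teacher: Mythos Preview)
Your proposal is correct and the injectivity argument, as well as the easy inclusion $f(\ker\delta_1)\subseteq\ker\tob$, match the paper essentially verbatim (including the use of Lemma~\ref{numcocy} once $j_*\partial[\mu]=0$ has been established).

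For the hard inclusion $\ker\tob\subseteq f(\ker\delta_1)$ the paper takes a slightly different, purely cohomological route. Starting from a cocycle action $(\talpha,u)$ with ${j_A}_*\kappa^3(\talpha,u)=0$, the paper applies Lemma~\ref{presentation}(2) (the splitting $K^\#_0(A)\cong\tK_0(A)\times\R$ available since $K_0(A)\subset\R$) to conclude simultaneously that ${q_{K_0(A)\to\tK_0(A)}}_*\kappa^3=0$ and $\rho_*\kappa^3=0$. The first gives $\kappa^3=j_*x$ for some $x\in H^3(G,\Z)$; the second forces $x$ to die in $H^3(G,\R)$, so $x=\partial[\mu]$ for some $\mu\in Z^2(G,\T)$ via the long exact sequence of $0\to\Z\to\R\to\T\to0$. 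Then $(\talpha,\mu^{-1}u)$ has $\kappa^3=0$. Your cochain-level plan---writing the cobounding $2$-cochain $c$ in $K^\#_0(A)$ as $[(x(g,h),r(g,h))]$ and absorbing it by adjusting paths and multiplying $u$ by $e^{-2\pi i r}$---is the hands-on version of the same computation: the constraint that the perturbed $\omega'$ remain $1$ forces $e^{2\pi i r}$ to be a genuine $\T$-cocycle, which is exactly the $\mu$ the paper produces. The paper's phrasing is cleaner because it never needs to name the cochain or track paths, whereas yours is more constructive; but neither buys anything the other does not.

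Two small points you omit: the paper first disposes of $A=\cO_2$ in one line (there $\cE_{\cO_2}(G)=0$ and $\tob=\ob$, so nothing remains to prove), and outerness of the lifting $(\talpha,u)$ is indeed automatic from injectivity of the $G$-kernel, so your parenthetical worry is unfounded.
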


\begin{proof} If $A=\cO_2$, we have $\cE_{\cO_2}(G)=\{0\}$ and $\tob=\ob$. 
Thus there is nothing to show. 

Assume $A=M_{\fP^\infty}\otimes \cO_\infty$ possibly with empty $\fP$. 
We treat $K_0(A)$ as a subring of $\R$. 
We denote by $j$ the inclusion map $j:\Z\to K_0(A)$, and by $\rho$ the inclusion map $\rho:K_0(A)\to \R$. 

Let $[\alpha]\in \ker \tob$. 
Since $\ob(\alpha)=0$, there exists a lifting $(\talpha,u)$ of $\alpha$ such that $(\talpha,u)$ is a cocycle action. 
Thus ${j_A}_*\kappa^3((\talpha,u))=\tob(\alpha)=0$. 
Lemma \ref{presentation} shows that ${q_{K_0(A)\to \tK_0(A)}}_*\kappa^3((\talpha,u))=0$ and 
$\rho_*\kappa^3((\talpha,u))=0$. 
The former implies that there exists $x\in H^3(G,\Z)$ with $j_*x=\kappa^3((\talpha,u))$,  
and the latter shows $(\rho\circ j)_*x=0$.  
This implies that there exists $\mu\in Z^2(G,\T)$ satisfying $x=\partial [\mu]$. 
Thus $\kappa^3((\talpha,\mu^{-1}u))=0$, and $[\alpha]\in f(\ker \delta_1)$, which shows $\ker\tob\subset f(\ker \delta_1)$. 
The other inclusion follows from ${j_A}_*\kappa^3((\beta,v))=\tob\circ f([(\beta,v)])$. 

Now we show that $f$ restricted to $\ker \delta_1$ is injective. 
Let $[(\alpha^{(1)},u^{(1)})],[(\alpha^{(2)},u^{(2)})]\in \ker\delta_1$, and assume  
$f([(\alpha^{(1)},u^{(1))}])=f([(\alpha^{(2)},u^{(2)})])$. 
Then we may assume $\alpha^{(1)}=\alpha^{(2)}$ and $u^{(2)}(g,h)=\mu(g,h)u^{(1)}(g,h)$ with $\mu\in Z^2(G,\T)$.  
This shows 
$$\kappa^3((\alpha^{(2)},u^{(2)}))=\kappa^3((\alpha^{(1)},u^{(1)}))+j_*\partial[\mu],$$ 
and we get $j_*\partial[\mu]=0$. 
Thus from Lemma \ref{numcocy} we obtain $[(\alpha^{(1)},u^{(1)})]=[(\alpha^{(2)},u^{(2)})]$. 
\end{proof}

A typical example of a group $G$ satisfying our assumption is a poly-$\Z$ group, and we recall its definition here.   
A discrete group $G$ is said to be \textit{poly-$\Z$} if there exists a subnormal series 
$$\{e\}=G_0\leq G_1\leq G_2\leq \cdots \leq G_n=G,$$
such that $G_i/G_{i-1}\cong \Z$ for any $1\leq i\leq n$. 
The number $n$ in the above definition is called the Hirsch length of $G$ and denoted by $h(G)$. 
It does not depend on the choice of the subnormal series as above, and coincides with the cohomological dimension of $G$.

\begin{proposition}\label{zero} Let $A$ be a strongly self-absorbing Kirchberg algebra, and let $G$ be a countable torsion-free 
discrete amenable group. 
Let $\gamma$ be an outer $G$-action on $\cO_\infty$. 
Then 
\begin{itemize}
\item[$(1)$] For any $G$-kernel $\alpha:G\to \Out(A)$, we have $[\alpha\otimes \gamma]=[\alpha]$.  
\item[$(2)$] Assume moreover that $G$ is a poly-$\Z$ group. 
Then for any $G$-kernel $\alpha:G\to \Out(A)$, we have $[\alpha\otimes \id_A\otimes \gamma]=[\alpha]$. 
\end{itemize} 
Here we slightly abuse the notation and we denote by the same symbol $\gamma$ the $G$-kernel induced by $\gamma$. 
\end{proposition}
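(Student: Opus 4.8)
The plan is to reduce both statements to absorption facts about outer actions on $\cO_\infty$ and to the classification of cocycle actions established earlier in this section. For part $(1)$: the key point is that an outer $G$-action $\gamma$ on $\cO_\infty$ satisfies $[\gamma]=0$ in $\cF_{\cO_\infty}(G)$, because $\cO_\infty$ is strongly self-absorbing and, by the corollary identifying $\cE_{\cO_\infty}(G)$ with $\bE^1_{\cO_\infty}(BG)$ together with $K_0(\cO_\infty)=\Z$ so that the reduced group $\bE^1_{\cO_\infty}(BG)$ is (in the relevant torsion-free situation) detected purely by the obstruction theory — and an honest action has vanishing $\kappa^3$ after passing to $\Aut_0$, being a genuine action rather than a properly cocycle one. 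More precisely, $\gamma\otimes\id_\K$ is, by the corollary after Theorem \ref{primary}, the unit of $\cE_{\cO_\infty}(G)$ since $\Aut(\cO_\infty)$ is contractible, and hence $[\gamma]$ is the unit of $\cF_{\cO_\infty}(G)$. Then $[\alpha\otimes\gamma] = [\alpha]+f([\gamma])=[\alpha]$ in $\cF_A(G)$, using that $A\otimes\cO_\infty\cong A$ (as $A\in\cD_{pi}$, so $A$ is $\cO_\infty$-absorbing) and that the semigroup operation on $\cF_A(G)$ is given by tensor product, which is compatible with the forgetful map $f$ from cocycle actions.

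For part $(2)$ the extra factor $\id_A$ must be absorbed. Here I would use that $G$ is poly-$\Z$, hence $BG$ has a finite CW-complex model and $G$ is amenable, so Conjecture \ref{C4}'s established consequences (Lemma \ref{kernel}) apply: $f$ restricted to $\ker\delta_1$ is injective and $\ker\tob = f(\ker\delta_1)$. The element $[\alpha\otimes\id_A\otimes\gamma]-[\alpha]$ (in whatever partial group structure is available, or worked with directly via representing cocycle actions) lies in the image of $f$ because $\tob(\alpha\otimes\id_A\otimes\gamma)=\tob(\alpha)+\tob(\id_A)+\tob(\gamma)=\tob(\alpha)$, the last two terms vanishing since $\id_A$ lifts to a genuine action (indeed the trivial action) and $\gamma$ does too. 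So the difference lies in $\ker\tob = f(\ker\delta_1)$, and it suffices to show the corresponding element of $\ker\delta_1\subset\cE_A(G)\cong\bE^1_A(BG)$ is trivial. That element is represented by the cocycle action $(\id_A\otimes\gamma, 1)$ — a genuine action — whose associated bundle $\cP^0$ is trivial since $\id_A\otimes\gamma$ extends to a genuine action on $A^s$ via $\Aut_0(A^s)$ arising from an honest action on $A$, and $\Aut(A)$ is contractible. Hence $[\id_A\otimes\gamma]=0$ in $\cE_A(G)$, giving $[\alpha\otimes\id_A\otimes\gamma]=[\alpha]$.

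The main obstacle, I expect, is the bookkeeping around the fact that $\cF_A(G)$ is not yet known to be a group in general, so phrases like "the difference $[\alpha\otimes\id_A\otimes\gamma]-[\alpha]$" are not literally meaningful at this stage of the paper. The honest argument must instead work at the level of liftings: choose a lifting $(\talpha,u)$ of $\alpha$, tensor with the genuine action $\id_A\otimes\gamma$ to get a lifting of $\alpha\otimes\id_A\otimes\gamma$ with the \emph{same} $\omega$, deduce $\tob$ is unchanged, then produce an explicit conjugacy $A\otimes A\otimes\cO_\infty\to A$ intertwining the two $G$-kernels modulo inner automorphisms. The existence of such a conjugacy is where the real content sits: it needs the strong self-absorption of $A$ (to identify $A\otimes A$ with $A$ approximately unitarily via $l$), the $\cO_\infty$-absorption, and — crucially for the poly-$\Z$ hypothesis — the classification input that $KK^G$-equivalence implies cocycle conjugacy for outer actions, i.e. Gabe–Szab\'o's theorem, to upgrade the $K$-theoretic triviality into an actual unitary conjugacy of $G$-kernels. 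Getting the equivariance of the absorbing isomorphism right, rather than merely a non-equivariant isomorphism, is the delicate step, and this is presumably exactly where the poly-$\Z$ (finite $BG$, amenable) hypothesis enters in part $(2)$ but not in part $(1)$.
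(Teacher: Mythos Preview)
Your proposal has a genuine circularity problem that cannot be repaired by bookkeeping. In part $(2)$ you write $[\alpha\otimes\id_A\otimes\gamma]=[\alpha]+[\id_A\otimes\gamma]$ in $\cF_A(G)$ and then want $[\id_A\otimes\gamma]$ to act as the identity. You correctly note that $[(\id_A\otimes\gamma,1)]$ is the unit of $\cE_A(G)$, so $f([\id_A\otimes\gamma,1])$ is a left identity for the \emph{image} of $f$. But $[\alpha]$ need not lie in that image: a $G$-kernel lifts to a cocycle action only when $\ob(\alpha)=0$, and for $A=M_{\fP^\infty}\otimes\cO_\infty$ or $A=\cO_2$ with poly-$\Z$ $G$ this obstruction can be nontrivial. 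Thus the equation $[\alpha]+f(e)=[\alpha]$ is exactly the content of the proposition, not a consequence of the bundle machinery. Your fallback suggestion (lift $\alpha$, then apply Gabe--Szab\'o to the cocycle actions) fails for the same reason: there is no lifting to apply it to. In part $(1)$ there is a further mismatch: $\alpha\otimes\gamma$ lives on $A\otimes\cO_\infty$, not on $A\otimes A$, so $[\alpha\otimes\gamma]$ is not $[\alpha]+[\text{anything}]$ in the semigroup $\cF_A(G)$; and in any case the identification $\cE_A(G)\cong\bE^1_A(BG)$ you invoke requires a finite $BG$, which part $(1)$ does not assume.

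The paper's proof takes a completely different route that avoids the semigroup structure of $\cF_A(G)$ altogether. It observes that Szab\'o's absorption criterion for strongly self-absorbing $G$-C$^*$-dynamical systems extends verbatim to $G$-kernels: $[\alpha\otimes\beta]=[\alpha]$ holds provided there is a unital equivariant embedding of $(B,\beta)$ into the central sequence algebra $(A_\omega,\alpha)$. For $(1)$ one takes $(B,\beta)=(\cO_\infty,\gamma)$ with $\gamma$ quasi-free; the embedding exists because $\alpha$ is outer on $A_\omega$ and one can find isometries implementing the quasi-free relations equivariantly. For $(2)$ one needs $(B,\beta)=(A\otimes\cO_\infty,\id_A\otimes\gamma)$, hence a unital copy of $A$ inside the fixed-point algebra $(A_\omega)^G$. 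This is where poly-$\Z$ enters: one inducts on the Hirsch length, writing $G=N\rtimes\langle\xi\rangle$, using the induction hypothesis to embed $A$ into $(A_\omega)^N$, and then averaging along a Rohlin tower for the single automorphism $\alpha_\xi$ (via the two-colour $\cE(A,A)$ trick) to push the embedding into $(A_\omega)^G$. None of this touches $\cE_A(G)$, $\bE^1_A(BG)$, or $\tob$.
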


\begin{proof} 
Note that Sz\'abo's result \cite[Theorem 2.6]{Sz18TAMS} holds for a $G$-kernel without essential change of the proof. 
Therefore our task is to construct an equivariant embedding of relevant $G$-C$^*$-algebras into the central sequence algebra of $A$. 

(1)
Since $\gamma$ is unique up to very strongly cocycle conjugate thanks to Gabe-Sz\'abo's classification theorem 
\cite[Corollary 6.11]{GS}, we may assume that $\gamma$ is a quasi-free $G$-action on $\cO_\infty$ given by $\gamma_g(S_h)=S_{gh}$, 
where $\{S_g\}_{g\in G}$ is the canonical generators of $\cO_\infty$. 
Moreover, it is strongly self-absorbing as an action. 

For a free ultrafilter $\omega\in \beta\N\setminus \N$, we set 
$$A^\omega=\ell^\infty(\N,A)/\{(x_n)\in \ell^\infty(\N,A);\;\lim_{n\to\omega}\|x_n\|=0\},$$
and $A_\omega=A^\omega\cap A'$, which is purely infinite by \cite[Proposition 3.4]{KP}. 
Then $\alpha$ induces an outer action on $A_\omega$, which we still denote by $\alpha$ (see the proof of \cite[Lemma 2]{N00}), 
and there exists a unital equivariant embedding of $(\cO_\infty,\gamma)$ into $(A_\omega,\alpha)$ 
(see the proof of \cite[Theorem 5.1]{GI11}, or alternatively \cite[Corollary 2.10]{Sz18CMP}). 
Thus we get $[\alpha\otimes \gamma]=[\gamma]$.  

(2) All we have to show is to give an equivariant embedding of $(A\otimes \cO_\infty,\id_A\otimes \gamma)$ 
into $(A_\omega,\alpha)$. 
As we have already given an embedding of $(\cO_\infty,\gamma)$, it suffices to embed $A$ into $(A_\omega)^G$, where 
$(A_\omega)^G$ is the fixed point subalgebra of the $G$-action $\alpha$ on $A_\omega$. 
We prove it by induction of the Hirsch length of $G$. 

We see that the statement is correct if the Hirsch length is zero, that is $G=\{e\}$, as there exists a unital embedding of $A$ 
into $A_\omega$.  
Assume that the statement is correct for all poly-$\Z$ group whose Hirsch length is less than or equal to $n-1$. 
Assume that the  Hirsch length of $G$ is $n$ now. 
We choose a normal subgroup $N$ of $G$ and $\xi\in G$ such that $G=N\rtimes \langle \xi \rangle$.  
Note that $B:=(A_\omega)^N$ is a purely infinite (see \cite[Corollary 3.2]{IM2010}), and the restriction of $\alpha_\xi$ to $B$, 
which we denote by $\beta$ for simplicity, is an aperiodic automorphism and it has a Rohlin property \cite[Theorem 3.6]{IM2010}. 
By induction hypothesis, there exists a unital embedding $\varphi:A\to B$.  

The following argument is essentially in \cite{SZ19}. 
We choose another embedding $\psi :A\to B$ such that $\psi(A)$ commutes with $\cup_{n\in \Z}\beta^n(\varphi(A))$. 
For $n\in \N$, we define $f_n:\Z\to\R$ by 
$$f_n(x)=\left\{
\begin{array}{ll}
1-\frac{2}{n}|x| , &\quad |x|\leq \frac{n}{2}, \\
0 , &\quad |x|>\frac{n}{2},
\end{array}
\right.
$$
and set 
$$\Theta^{(0)}_n(a)(x)=\sum_{k\in \Z}f(x+kn)\beta^{x+kn}(\varphi(a)),$$
$$\Theta^{(1)}_n(a)(x)=\sum_{k\in \Z}f(x+\frac{n}{2}+kn)\beta^{x+kn}(\psi(a)).$$
Note that for a given $x\in \Z$, we have $f(x+kn)=0$ except for only one $k\in \Z$, and the same statement is true for 
$f(x+\frac{n}{2}+kn)$ too. 
We have 
$$\Theta^{(0)}_n(1_A)(x)+\Theta^{(1)}_n(1_A)(x)=1_B,$$
and the following estimate holds: 
$$\|\Theta^{(j)}_n(a)(x+1)-\beta(\Theta^{(j)}_n(a)(x))\|\leq \frac{2}{n}\|a\|.$$
By construction $\Theta^{(j)}_n(a)$ has period $n$, and it gives rise to an order 0 completely positive map from $A$ to 
$C(\Z/n\Z)\otimes B$. 

We choose a Rohlin tower $\{e^{(0)}_x\}_{x=0}^{n-1}\cup \{e^{(1)}_x\}_{x=0}^n$ commuting with 
$$\bigcup_{n\in \Z}\beta^n(\varphi(A))\cup \bigcup_{n\in \Z}\beta^n(\psi(A)),$$
and set 
$$\Phi^{(j)}_n(a)=\sum_{x=0}^{n-1}e^{(0)}_x\Theta^{(j)}_n(a)(x)+\sum_{x=0}^ne^{(1)}_x\Theta^{(j)}_{n+1}(a)(x).$$
Then $\Phi^{(0)}_n$ and $\Phi^{(1)}_n$ are order 0 completely postive maps from $A$ to $B$ with mutually commuting ranges, 
and they satisfy $\Phi^{(0)}_n(1_A)+\Phi^{(1)}_n(1_A)=1_B$. 
Moreover, we have 
$$\|\Phi^{(j)}_n(a)-\beta(\Phi^{(j)}_n(a))\|\leq \frac{2}{n}\|a\|.$$

Let 
$$\cE(A,A)=\{f\in C([0,1],A\otimes A);\; f(0)\in A\otimes \C,\; f(1)\in \C\otimes A\}.$$
Then there exists a unital homomorphism $\Psi_n:\cE(A,A)\to B$ satisfying 
$\Phi^{(0)}_n(a)=\Psi_n((1-t)(a\otimes 1))$ and $\Phi^{(1)}_n(a)=\Psi_n(t(1\otimes a))$ for all $a\in A$, 
where $t$ is the coordinate function of $[0,1]$ (see \cite[Proposition 3.2]{SZ19}, \cite[Lemma 6.6]{HSWW}).  
Since 
$$\lim_{n\to 0}\|\beta\circ \Psi_n(x)-\Psi_n(x)\|=0$$
holds for all $x$ in a generating set, it holds for general $x\in \cE(A,A)$ too. 
Since $A$ is strongly self-absorbing, there exists an embedding $A\to \cE(A,A)$. 
Thus the usual diagonal sequence argument shows that we have an embedding of $A$ into $B^\beta=(A_\omega)^G$, 
and the induction argument is finished. 
\end{proof}

Note that $\id_A\otimes \gamma$ is a unique outer $G$-action on $A\otimes \cO_\infty\cong A$ up to cocycle conjugacy, 
and it gives a unit of $\cF_A(G)$ if the assumption of Proposition \ref{zero} is satisfied.  

\begin{theorem} Let $G$ be a countably infinite amenable discrete group such that a compact model of 
the classifying space $BG$ exists.
Then 
\begin{itemize}
\item [$(1)$] $\cF_{\cO_\infty}(G)$ is a group. 
\item [$(2)$] Assume moreover that $G$ is poly-$\Z$. 
Then $\cF_{A}(G)$ is a group for $A=M_{\fP^\infty}\otimes \cO_\infty$ and $A=\cO_2$. 
\end{itemize}
\end{theorem}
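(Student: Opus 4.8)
The plan is to verify the two defining properties of an abelian group for the commutative semigroup $\cF_A(G)$: existence of an identity, and invertibility of every element. An identity is supplied by Proposition \ref{zero}. Note first that a finite CW-complex model of $BG$ forces $BG$ to be finite dimensional, hence $G$ is torsion-free. Thus for $A=\cO_\infty$, part (1) of Proposition \ref{zero} applies and gives $[\alpha]+[\gamma]=[\alpha]$ for every $[\alpha]\in\cF_{\cO_\infty}(G)$, where $\gamma$ is an outer $G$-action on $\cO_\infty$; so $e:=[\gamma]$ is an identity. For $A=M_{\fP^\infty}\otimes\cO_\infty$ and $A=\cO_2$ with $G$ poly-$\Z$, part (2) of Proposition \ref{zero} gives $[\alpha]+[\id_A\otimes\gamma]=[\alpha]$, and $\id_A\otimes\gamma$ is an outer $G$-action on $A\otimes\cO_\infty\cong A$; by uniqueness of outer $G$-actions on $A$ this class $e$ coincides with $f([(\gamma_A,1)])$, the image under the forgetful map of the unit of $\cE_A(G)$.

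Next, $\ker\tob\subseteq\cF_A(G)$ is a group with identity $e$. Indeed, by Lemma \ref{kernel} we have $\ker\tob=f(\ker\delta_1)$ and $f$ is injective on $\ker\delta_1$; since $\cE_A(G)\cong\bE_A^1(BG)$ is a group and $\delta_1$ is a group homomorphism, $\ker\delta_1$ is a subgroup, so $f$ restricts to a semigroup isomorphism of the group $\ker\delta_1$ onto $\ker\tob$ carrying the unit of $\cE_A(G)$ to $e$. (When $A=\cO_2$ one has $\cE_{\cO_2}(G)=\{0\}$, so $\ker\delta_1=\{0\}$ and $\ker\tob=\{e\}$.) It therefore suffices to show that for every $[\alpha]\in\cF_A(G)$ there is $[\beta]\in\cF_A(G)$ with $\tob([\alpha]+[\beta])=0$: granting this, $[\alpha]+[\beta]$ lies in the group $\ker\tob$ and so admits an inverse $[\eta]$ there with $[\alpha]+[\beta]+[\eta]=e$, whence $[\beta]+[\eta]$ is an inverse of $[\alpha]$ in $\cF_A(G)$.

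For $A=\cO_2$ we have $\tob=\ob$, and Theorem \ref{JK} provides a $G$-kernel $\beta$ on $\cO_2$ with $\ob(\beta)=-\ob(\alpha)$, so $\tob([\alpha]+[\beta])=0$. For $A=\cO_\infty$ or $A=M_{\fP^\infty}\otimes\cO_\infty$ I first dispose of the $\ob$-part by passing to the opposite algebra: a direct check with the defining cocycle shows that if $(\talpha,u)$ lifts $\alpha$ with obstruction cocycle $\omega$, then $(\talpha,u^*)$ lifts the induced $G$-kernel $\alpha^{\op}$ on $A^{\op}$ with obstruction cocycle $\overline{\omega}$, so $\ob(\alpha^{\op})=-\ob(\alpha)$; since $A\in\cD_{pi}$ implies $A^{\op}\cong A$ (a Kirchberg algebra in the bootstrap class with $K_1=0$ and unchanged $(K_0,[1]_0)$, so Kirchberg--Phillips applies), transporting $\alpha^{\op}$ along such an isomorphism, which is the identity on the center, yields a $G$-kernel $\alpha^\vee$ on $A$ with $\ob(\alpha^\vee)=-\ob(\alpha)$. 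Then $\ob([\alpha]+[\alpha^\vee])=0$, so by exactness of the cohomology long exact sequence attached to the short exact sequence Eq.(\ref{exse}) we have $\tob([\alpha]+[\alpha^\vee])={j_A}_*(\xi)$ for some $\xi\in H^3(G,K_0(A))$. Since $\kappa^3\colon\cE_A(G)\to H^3(G,K_0(A))$ is surjective, choose $[(\sigma,w)]\in\cE_A(G)$ with $\kappa^3(\sigma,w)=-\xi$ and set $[\sigma']=f([(\sigma,w)])$; as $\ob(\sigma')=0$ we get $\tob([\sigma'])={j_A}_*\kappa^3(\sigma,w)=-{j_A}_*(\xi)$. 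Hence $[\beta]:=[\alpha^\vee]+[\sigma']$ satisfies $\tob([\alpha]+[\beta])={j_A}_*(\xi)-{j_A}_*(\xi)=0$, as required.

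The only step that leaves the formal apparatus of the section is the identity $\ob(\alpha^{\op})=-\ob(\alpha)$ together with the isomorphism $A^{\op}\cong A$, which I expect to be the main (indeed essentially the only) point requiring care; the remainder is bookkeeping with the exact sequence Eq.(\ref{exse}) and with the group structures on $\ker\delta_1$ and $\cE_A(G)$ already established. One could alternatively try to prove $\tob(\alpha^\vee)=-\tob(\alpha)$ directly and skip the correction by the cocycle action $(\sigma,w)$, but tracking the conjugation-induced involution on $K^\#_0(A)$ is less transparent than the two-step argument above.
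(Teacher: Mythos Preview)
Your proof is correct, but it takes a more elaborate route than the paper's. Both arguments share the same core idea—using the opposite $G$-kernel $\alpha^{\op}$ on $A^{\op}\cong A$—but the paper extracts more from it. You only use $\ob(\alpha^{\op})=-\ob(\alpha)$, which forces you to (i) first identify $\ker\tob$ with the group $f(\ker\delta_1)$ via Lemma~\ref{kernel}, (ii) split off the case $A=\cO_2$ via Theorem~\ref{JK}, and (iii) correct the residual class ${j_A}_*(\xi)$ using surjectivity of $\kappa^3$. The paper instead observes that the pair
\[
(\talpha_g\otimes J\talpha_gJ^{-1},\;u(g,h)\otimes J(u(g,h))^*)
\]
is already a genuine cocycle action on $A\otimes A^{\op}$: the obstruction cocycle is $\omega\cdot\overline{\omega}\equiv 1$ on the nose, not merely cohomologically trivial. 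Hence $[\alpha]+[\alpha^{\op}]$ lies in $f(\cE_A(G))$, and since $\cE_A(G)\cong\bE_A^1(BG)$ is a group, one takes the inverse $[(\beta,v)]$ there and concludes that $[\alpha^{\op}\otimes\beta]$ inverts $[\alpha]$. This works uniformly for all $A\in\cD_{pi}$, without invoking $\tob$, $\ker\delta_1$, the surjectivity of $\kappa^3$, or the separate treatment of $\cO_2$. Your approach has the mild advantage of exercising more of the machinery developed in the section, and your final remark about trying to prove $\tob(\alpha^\vee)=-\tob(\alpha)$ directly is apt—though the paper's shortcut sidesteps that computation entirely.
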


\begin{proof} Let $A\in \cD_{pi}$. 
For a pair $(A,G)$ satisfying the assumption of (1) or (2), we already know from Proposition \ref{zero} that 
$\cF_A(G)$ has a unit element. 
Thus it suffices to show that each element has its inverse. 

Let $\alpha:G\to \Out(A)$ be a $G$-kernel, and let $(\talpha,u)$ be a lifting of it. 
Note that $A$ is isomorphic to its opposite algebra $A^{\op}$.  
Let $J:A\to A^\op$ be the canonical bijection. 
Then $J\circ \talpha_g\circ J^{-1}$ induces the opposite $G$-kernel $\alpha^\op:G\to \Out(A^\op)$, and 
$$(\talpha_g\otimes J\circ \talpha_g\circ J^{-1}, u(g,h)\otimes J(u(g,h))^*),$$
is a cocycle action. 
Thus there exists its inverse $[(\beta,v)]\in \cE_A(G)$. 
This means that $[\alpha^\op\otimes \beta]$ is the inverse of $[\alpha]$. 
\end{proof}

\begin{cor} If $G$ is poly-$\Z$, 
$$\ob:\cF_{\cO_2}(G)\to H^3(G,\T)$$
is an isomorphism. 
\end{cor}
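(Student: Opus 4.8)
The plan is to exhibit $\ob$ as a bijective semigroup homomorphism onto $H^3(G,\T)$ and then promote it to a group isomorphism using the theorem immediately above, which tells us that $\cF_{\cO_2}(G)$ is a group when $G$ is poly-$\Z$. Since a bijective homomorphism between two groups is automatically an isomorphism, the whole matter reduces to checking that $\ob$ is injective and surjective, and both facts are already in hand.

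First I would note that $K_0(\cO_2)=0$, so the short exact sequence (\ref{exse}) degenerates and $\ev_1\colon K_0^\#(\cO_2)\to\T$ is an isomorphism; consequently $\tob$ and $\ob$ coincide under this identification, both as invariants valued in $\T$ and, via ${\ev_1}_*$, as maps into $H^3(G,\T)\cong H^3(G,K_0^\#(\cO_2))$. For injectivity I would apply Lemma \ref{kernel}: as $\cE_{\cO_2}(G)=\{0\}$ we have $\ker\delta_1=\{0\}$, hence $\ker\ob=\ker\tob=f(\ker\delta_1)$ is the unit of $\cF_{\cO_2}(G)$, so $\ob$ is injective. One only has to confirm that poly-$\Z$ groups satisfy the standing hypotheses of that subsection --- amenability and the existence of a finite CW-complex model of $BG$ --- which they do, being solvable and admitting a closed aspherical manifold as classifying space. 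For surjectivity there is nothing to prove: Theorem \ref{JK} already asserts that every class in $H^3(G,\T)$ is realized as $\ob(\alpha)$ for some $G$-kernel $\alpha\colon G\to\Out(\cO_2)$.

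I do not expect a real obstacle inside this corollary: it is a bookkeeping consequence of the groupness of $\cF_{\cO_2}(G)$ (the preceding theorem, resting ultimately on Proposition \ref{zero} and the opposite-algebra trick), of the kernel computation of Lemma \ref{kernel}, and of the realization result Theorem \ref{JK}. The one point that merits a second's attention is the upgrade from a bijective semigroup homomorphism to a group isomorphism, which is legitimate precisely because both $\cF_{\cO_2}(G)$ and $H^3(G,\T)$ are groups; the genuinely substantive inputs --- the groupness of $\cF_{\cO_2}(G)$ and Kirchberg's $\cO_2$ theorem underlying Theorem \ref{JK} --- have been handled earlier.
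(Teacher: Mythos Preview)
Your argument is correct and coincides with the paper's intended (unwritten) proof: the corollary is stated without proof precisely because it follows immediately from the preceding theorem (giving group structure), Lemma \ref{kernel} (injectivity, via $\cE_{\cO_2}(G)=\{0\}$ and hence $\ker\delta_1=\{0\}$), and Theorem \ref{JK} (surjectivity), together with the identification $K_0^\#(\cO_2)\cong\T$ that collapses $\tob$ to $\ob$. You have assembled exactly these ingredients.
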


Our final goal in this section is the following theorem. 

\begin{theorem}\label{main} Let $A=M_{\fP^\infty}\otimes \cO_\infty$ with possibly empty $\fP$.  
Then Conjecture \ref{C4} holds in the following two cases. 
\begin{itemize}
\item[$(1)$] $G=\Z^n$. 
\item[$(2)$] $G$ is a poly-$\Z$ group with $h(G)\leq 5$, where $h(G)$ is the Hirsch length of $G$. 
\end{itemize}
\end{theorem}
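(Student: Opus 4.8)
The plan is to establish Conjecture \ref{C4} by combining the structural results already proved (Lemma \ref{kernel}, the identification $\cE_A(G)\cong \bE_A^1(BG)$, and the surjectivity of $\delta_1$) with two remaining ingredients: surjectivity of $\tob$, and the fact that $\cF_A(G)$ is a group. The latter is already available in case (2) from the theorem on poly-$\Z$ groups above, and in case (1) since $\Z^n$ is poly-$\Z$; so the real content is the exactness of the sequence, and by Lemma \ref{kernel} only surjectivity of $\tob$ remains to be shown. I would first reduce the target group: by Lemma \ref{presentation}(2) we have $K^\#_0(A)\cong \tK_0(M_{\fP^\infty})\times \R$, so $H^3(G,K^\#_0(A))\cong H^3(G,\tK_0(M_{\fP^\infty}))\times H^3(G,\R)$, and I would hit the two factors separately. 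For the $\R$-component, tensoring with $\cO_\infty$-kernels realizing classes in $H^3(G,\R)$ (using Theorem \ref{infinite Cuntz} together with Conjecture \ref{C3} for $\cO_\infty$, which for poly-$\Z$ $G$ and $G=\Z^n$ should be accessible — indeed for $\cO_\infty$ the map $\tob$ lands in ${q_{\R\to\T}}_*H^3(G,\R)$ and one needs the reverse realization), and for the $\tK_0(M_{\fP^\infty})$-component using the reduction to $M_{p^\infty}$ and Jones-type constructions as in the proof that Conjecture \ref{C2} holds for $\Z^n$, possibly extended to poly-$\Z$ groups.

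More concretely, I would proceed as follows. Step 1: record that $\cF_A(G)$ is a group in both cases, citing the poly-$\Z$ theorem (note $h(\Z^n)=n$, but the argument for $\Z^n$ does not need the Hirsch-length bound since $\Z^n$ is abelian and the relevant embedding results hold). Step 2: by Lemma \ref{kernel}, the sequence is exact at $\ker\delta_1$ and at $\cF_A(G)$; it remains to show $\tob$ is surjective onto $H^3(G,K^\#_0(A))\cong H^3(G,\tK_0(M_{\fP^\infty}))\times H^3(G,\R)$. Step 3: realize the $H^3(G,\R)$-summand. Since $K^\#_0(\cO_\infty)\cong\R$ with $\ev_1=q_{\R\to\T}$, a $G$-kernel $\beta:G\to\Out(\cO_\infty)$ has $\tob(\beta)\in H^3(G,\R)$, and one must show every class arises. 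For $G=\Z^n$ this follows because $H^3(\Z^n,\R)$ is a finite-dimensional real vector space and one can cook up quasi-free-type anomalous actions; more generally one invokes the analogue of the construction behind Theorem \ref{JK} adapted so that the cocycle $\omega$ takes values in $e^{2\pi i\R}=\T$ lifted to $\R$. Step 4: realize the $H^3(G,\tK_0(M_{\fP^\infty}))$-summand. Using the Dadarlat-Pennig side, classes in $H^3(G,K_0(M_{\fP^\infty}\otimes\cO_\infty))=H^3(G,K_0(A))$ are all realized as $\kappa^3(\alpha,u)$ by the corollary that $\kappa^3:\cE_A(G)\to H^3(G,K_0(A))$ is surjective; pushing forward along $q_{K_0(A)\to\tK_0(A)}$ and using ${j_A}_*\kappa^3(\talpha,u)=\tob(\alpha)$ together with $\tob^r(\alpha)=-(\pr_1\circ\varphi_A)_*\tob(\alpha)$ gives realization of all of $H^3(G,\tK_0(M_{\fP^\infty}))$ in the image of $f$. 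Step 5: combine by tensor product: given a target $(c_1,c_2)\in H^3(G,\tK_0(M_{\fP^\infty}))\times H^3(G,\R)$, take $\alpha_1$ from Step 4 realizing $c_1$ (with trivial $\R$-part after arranging the lift to be a cocycle action) and $\beta$ from Step 3 on $\cO_\infty$ realizing $c_2$, and use $\tob(\alpha_1\otimes\beta)=(c_1,c_2)$ via the lemma computing $(\psi_{A\otimes\cO_\infty,\tau_*})_*\tob(\alpha\otimes\beta)$.

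The main obstacle I expect is Step 3 — realizing the full $H^3(G,\R)$-summand via $G$-kernels of $\cO_\infty$, i.e. verifying Conjecture \ref{C3} for $\cO_\infty$ for these groups. Theorem \ref{infinite Cuntz} only gives the containment $\tob(\beta)\in{q_{\R\to\T}}_*H^3(G,\R)=H^3(G,\R)$ (as $H^3(G,\R)\to H^3(G,\T)$ need not be injective, but the point is surjectivity of $\tob$ onto $H^3(G,\R)$ itself, which is a genuine realization statement). For $G=\Z^n$ one has the explicit structure $H^3(\Z^n,\R)\cong\R^{\binom{n}{3}}$ and can build anomalous quasi-free actions by hand, exploiting that the relevant $2$-cocycle deformation lives in $Z^2(\Z^{n-1},\R)$ after a Jones-type central extension; this should work but requires care that the de la Harpe--Skandalis-type integral $\tDelta$ picks out exactly the intended $\R$-class. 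For general poly-$\Z$ $G$ with $h(G)\le 5$, the cohomological dimension bound is what makes the inductive/obstruction-theoretic argument terminate, and one would push the construction through the subnormal series $G=N\rtimes\langle\xi\rangle$ using the embedding technology from Proposition \ref{zero} combined with the Dadarlat--Pennig realization; the delicate point is controlling the secondary obstructions, which is precisely why the Hirsch length must be at most $5$ (so that $H^k(G,-)$ vanishes for $k\ge 6$ and the only obstruction beyond the primary one in degree $3$ that could interfere is automatically killed). I would treat this as the crux and allocate the bulk of the detailed argument there, citing \cite{Me19} and \cite{GS} for the cocycle-conjugacy classification that underpins the reduction.
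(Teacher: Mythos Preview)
Your proposal has a genuine gap, and it lies exactly where you yourself flag the ``main obstacle'': Step~3 is circular. Realizing every class in $H^3(G,\R)$ as $\tob(\beta)$ for a $G$-kernel $\beta:G\to\Out(\cO_\infty)$ is precisely the case $\fP=\emptyset$ of the theorem you are trying to prove. The paper does not establish Conjecture~\ref{C3} for $\cO_\infty$ as an independent input; it is a \emph{consequence} of Theorem~\ref{main}. Your suggested constructions do not fill this gap: ``quasi-free-type anomalous actions'' is not a thing --- quasi-free actions on $\cO_\infty$ are genuine actions, hence have $\ob=0$, and their $\tob$ lies in ${j_A}_*H^3(G,\Z)\subset H^3(G,\R)$, which is only the integer lattice. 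The Jones-type central extension idea is likewise not developed into anything that produces a prescribed real class.

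The paper's route is orthogonal to your coefficient splitting $K^\#_0(A)\cong\tK_0(A)\times\R$. It instead uses the \emph{group} decomposition $G=N\rtimes\langle\xi\rangle$ and the associated Lyndon--Hochschild--Serre exact sequence
\[
0\to H^1(\Z,H^2(N,K^\#_0(A)))\xrightarrow{\iota}H^3(G,K^\#_0(A))\xrightarrow{\mathrm{res}}H^3(N,K^\#_0(A))^\Z\to 0.
\]
The two lemmas immediately preceding the proof give an explicit construction of $G$-kernels realizing (up to correction by ${j_A}_*\kappa^3$, using surjectivity of $\kappa^3$) every class in the image of $\iota$. The restriction part is then handled by induction on the Hirsch length: given $x\in H^3(N,K^\#_0(A))^\Z$, take an $N$-kernel $\beta$ with $\tob(\beta)=x$ and extend it to $G$. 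For $G=\Z^n$ the extension is trivial (set $\alpha_{n\xi^l}=\beta_n\otimes\gamma_{n\xi^l}$). For general poly-$\Z$ groups the extension requires knowing $[\beta]=[\beta_{\xi(\cdot)}]$ in $\cF_A(N)$; since both have $\tob$ equal to $x$, this follows from Lemma~\ref{kernel} \emph{provided} $\ker\delta_1=\{0\}$ for $N$. That is where $h(G)\le 5$ enters: it forces $h(N)\le 4$, so $\dim BN\le 4$, and the Atiyah--Hirzebruch spectral sequence makes $\delta_1$ an isomorphism. Your explanation of the bound (secondary obstructions in degree $\ge 6$) is not the actual mechanism.
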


We need some preparation for the proof.

Let $\partial: H^2(G,\T)\to H^3(G,\Z)$ and $\partial_A:H^2(G,\T)\to H^3(G,K_0(A))$ be the connecting maps 
of the cohomology long exact sequences arising from Eq.(\ref{ZTR}) and Eq.(\ref{exse}) respectively. 
Then direct computation using Lemma \ref{presentation} shows $\partial_A=j_*\circ \partial$, where 
$j:\Z\to K_0(A)$ is the inclusion map.  

We choose a normal subgroup $N$ of $G$ and $\xi\in G$ such that $G=N\rtimes \langle \xi\rangle$.  
We denote $\xi(n)=\xi n\xi^{-1}$ for $n\in N$. 
We fix an outer $G$-action $\gamma$ on $A$.

\begin{lemma} For $\mu\in Z^2(G,\T)$ satisfying $\partial_A[\mu]=0$,  
there exists a $G$-kernel $\alpha:G\to \Out(A)$ with a lifting $(\talpha,u)$ such that $u(n_1,n_2)=1$ and 
$$u(\xi,n_1)\talpha_{n_1}(u(\xi,n_2))=\mu(\xi(n_1),\xi(n_2))u(\xi, n_1n_2)$$
for all $n_1,n_2\in N$. 
\end{lemma}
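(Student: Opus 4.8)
The plan is to build the required $G$-kernel $\alpha$ and its lifting $(\talpha,u)$ from the chosen fixed outer $G$-action $\gamma$ on $A$ by a Zorn-free explicit construction, using that $\gamma$ exists and $A\otimes A\cong A$. First I would use the hypothesis $\partial_A[\mu]=0$, together with the identity $\partial_A=j_*\circ\partial$ established just above, to deduce $j_*\partial[\mu]=0$ in $H^3(G,K_0(A))$; by Lemma \ref{numcocy} this is exactly the condition under which multiplying a cocycle action by the scalar $2$-cocycle $\mu$ does not change its class in $\cE_A(G)$, so $\mu$ is "absorbable" in the sense needed. The idea is then to take $\talpha_g=\gamma_g\otimes\id$ acting on $A\cong A\otimes A$, and to design the unitaries $u(g,h)$ so that their restriction to the $N\times N$ block is trivial while the $\xi$-direction carries the cocycle $\mu$ pulled back along $\xi(\cdot)$.

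Concretely, I would first arrange, after replacing $(\talpha,u)$ by an equivalent lifting, the normalizations $u(n_1,n_2)=1$ for $n_1,n_2\in N$; this is possible because the restriction of $\alpha$ to $N$ has obstruction class whose image we are free to kill on the $N$-block by choosing a lifting of $\alpha|_N$ that is a genuine (cocycle) action — here one invokes that outer actions of $N$ on $A$ exist and that any $G$-kernel can be perturbed inner-automorphism by inner-automorphism on a subgroup. Then, using the semidirect product decomposition $G=N\rtimes\langle\xi\rangle$, I would prescribe the remaining values of $u$ on the generators $(\xi,n)$ directly by the formula in the statement, namely $u(\xi,n_1)\talpha_{n_1}(u(\xi,n_2))=\mu(\xi(n_1),\xi(n_2))u(\xi,n_1n_2)$, and extend multiplicatively in $\xi$ using the $2$-cocycle identity for $\mu$. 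One must check that this prescription is consistent: the associativity constraints among the $u$'s on products of the form $(\xi^{a}n)(\xi^{b}n')$ reduce, after the $N$-block is trivialized, precisely to the $2$-cocycle relation for $\mu$ restricted to $N$ twisted by $\xi$, which holds because $\mu\in Z^2(G,\T)$.

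The main obstacle I anticipate is showing that such a $u$ genuinely exists as a family of \emph{unitaries in $A$} (not merely in $\Out$), i.e. that the prescribed identities among products of $\talpha_g$'s are realized by honest unitaries with the stated relations, and that the resulting $\alpha$ is \emph{injective} as a $G$-kernel. Injectivity I would get by tensoring with the fixed outer action $\gamma$ as in the proofs of Proposition \ref{zero} and the group-structure theorem: replacing $\talpha_g$ by $\talpha_g\otimes\gamma_g$ forces outerness of every $\talpha_g$, $g\ne e$, without disturbing the $u$'s (which we place in the first tensor factor). Existence of the unitaries is where the condition $\partial_A[\mu]=0$ is really used: it guarantees that the scalar obstruction $[\mu]$ does not force a genuine $K_0$-valued obstruction, so the path/loop construction behind $\tob$ (Definition \ref{deftob}) closes up, i.e. one can realize the prescribed cocycle relations on the nose rather than only up to a $K_0$-class. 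I would phrase this as: start from a cocycle action realizing the $N$-block trivially, use the de la Harpe–Skandalis-free topological argument of Section 3 together with Lemma \ref{numcocy} to absorb $\mu$, and then read off the unitaries $u(\xi,n)$ from the absorbing isomorphism. The remaining verification — that these $u$'s satisfy the displayed equation — is then a direct computation with the $2$-cocycle identity and the definition of the induced action on the $\K$-factor appearing in $C_\mu$, which I would not grind through here.
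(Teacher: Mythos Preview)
Your proposal has a genuine gap. The concrete suggestion $\talpha_g=\gamma_g\otimes\id$ cannot work: since $\gamma$ is a genuine $G$-action, $\talpha_g\talpha_h=\talpha_{gh}$ on the nose, so any choice of $u(g,h)$ with $\talpha_g\talpha_h=\Ad u(g,h)\circ\talpha_{gh}$ must lie in the center, i.e.\ $u(g,h)\in\T$. The displayed relation then reads $u(\xi,n_1)u(\xi,n_2)=\mu(\xi(n_1),\xi(n_2))\,u(\xi,n_1n_2)$ with scalars, which forces $[\mu|_N]=0$ in $H^2(N,\T)$; this is not part of the hypothesis. More generally, you cannot ``prescribe'' $u(\xi,n)$ independently of $\talpha$: once $\talpha$ is fixed, $u(\xi,n)$ is determined up to a scalar by $\talpha_\xi\talpha_n=\Ad u(\xi,n)\circ\talpha_{\xi(n)\xi}$, so the whole content of the lemma lies in constructing $\talpha_\xi$ correctly.

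The paper's proof supplies exactly the missing ingredient. One applies Lemma~\ref{numcocy} not to $G$ but to $N$, and not to $(\gamma|_N,1)$ versus $(\gamma|_N,\mu)$ but to $(\gamma|_N,1)$ versus the \emph{twisted} pair $(\beta,\mu)$ with $\beta_n:=\gamma_{\xi^{-1}(n)}$. The hypothesis $\partial_A[\mu]=j_*\partial[\mu]=0$ ensures these are cocycle conjugate in $\cE_A(N)$, producing $\theta\in\Aut(A)$ and unitaries $v_n\in U(A)$ with $\theta\gamma_n\theta^{-1}=\Ad v_{\xi(n)}\circ\gamma_{\xi(n)}$ and $v_{n_1}\gamma_{n_1}(v_{n_2})=\mu(n_1,n_2)v_{n_1n_2}$. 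One then sets $\talpha_{n\xi^l}=\gamma_n\theta^l\otimes\gamma_{n\xi^l}$ on $A\otimes A\cong A$; the second tensor factor guarantees outerness, while the first factor makes $\talpha_\xi\talpha_n=\Ad(v_{\xi(n)}\otimes 1)\circ\talpha_{\xi(n)\xi}$, so that $u(\xi,n)=v_{\xi(n)}\otimes 1$ are honest (non-scalar) unitaries satisfying the displayed identity. The point you missed is that $\theta$, not $\gamma_\xi$, must carry the $\xi$-direction of the lift, and that $\theta$ is manufactured from a cocycle conjugacy over $N$ between $\gamma|_N$ and its $\xi^{-1}$-conjugate equipped with $\mu$.
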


\begin{proof} We define an $N$-action $\beta$ on $A$ by $\beta_n=\gamma_{\xi^{-1}(n)}$. 
Note that $\beta$ and $\gamma|N$ are cocycle conjugate. 
Since $j_*\partial[\mu]=0$, Lemma \ref{numcocy} implies $[(\gamma|_N,1)]]=[(\beta,\mu)]$ in $\cE_A(N)$, 
and there exist $\theta\in \Aut(A)$ and $v_n\in U(A)$ satisfying 
$$\theta\circ \beta_n\circ \theta^{-1}=\Ad v_n\circ \gamma_n,$$
$$v_{n_1}\beta_{n_1}(v_{n_2})=\mu(n_1,n_2)v_{n_1n_2}.$$
Let $\talpha_{n\xi^l}=\gamma_n\theta^l\otimes \gamma_{n\xi^l}$ for $n\in N$ and $l\in \Z$, 
which gives rise to a $G$-kernel $\alpha:G\to \Out(A)$. 
Since $\talpha_{n_1}\circ \talpha_{n_2}=\talpha_{n_1n_2}$ for $n_1,n_2\in N$, and 
$\alpha_\xi\circ \talpha_{n}=\Ad (v_{\xi(n)}\otimes 1) \circ \talpha_{\xi(n)\xi}$ for $n\in N$, 
we can choose a representative $(\talpha,u)$ of $\alpha$ with $u(n_1,n_2)=1$ and $u(\xi,n)=v_{\xi(n)}\otimes 1$. 
They have the desired property. 
\end{proof}

We recall an exact sequence arising from the Lyndon/Hochschild-Serr spectral sequence \cite[Chapter VII, Section 6]{B82}: 
$$\begin{tikzcd}
0\arrow[r]&H^1(\Z,H^2(N,M))\arrow[r,"\iota_M"]&H^3(G,M) \arrow[r,"\mathrm{res}"]& H^3(N,M)^\Z\arrow[r]&0
\end{tikzcd}.$$
The map ``$\mathrm{res}$" is the restriction map from $G$ to $N$, and $\Z$ is generated by $\xi$. 
We recall a detailed description of the injective map $\iota_M:H^1(\Z,H^2(N,M))\to H^3(G,M)$ from \cite[Section 7]{IMI}. 
For simplicity, we assume that $M$ is a trivial $G$-module. 

The $\xi$-action on $Z^2(N,M)$ is given by $\xi\cdot\mu(n_1,n_2)=\mu(\xi^{-1}\cdot n_1,\xi^{-1}\cdot n_2)$, 
and 
$$H^1(\Z,H^2(N,M))=H^2(N,M)/(1-\xi_*)H^2(N,M).$$  
We denote by $Q_M$ the quotient map from $H^2(N,M)$ onto $H^1(\Z,H^2(N,M))$. 
Let $\rho\in Z^1(\Z,H^2(N,M))$. 
Then $\rho_m$ is determined by $\rho_1=[\mu]$ and the cocycle relation $\rho_{m+n}=\rho_m+\xi^n_*\rho_n$. 
Now $\iota_M[\rho]\in H^3(G,M)$ is the cohomology class of the following cocycle: 
$$\omega(n_1\xi^{l_1},n_2\xi^{l_2},n_3\xi^{l_3})=-\rho_{l_1}(\xi^{l_1}(n_2),\xi^{l_1+l_2}(n_3)).$$

\begin{lemma} Let $\alpha$ be the $G$-kernel constructed in the previous lemma. 
Then 
$$\ob(\alpha)=\iota_\T\circ Q_\T([\mu]).$$
\end{lemma}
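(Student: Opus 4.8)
The plan is to compute $\ob(\alpha)$ directly from the explicit lifting $(\talpha,u)$ constructed in the previous lemma, using the cocycle formula Eq.(\ref{obs}), and then to recognize the resulting 3-cocycle as the image of $[\mu]$ under the composite $\iota_\T\circ Q_\T$ via the explicit description of $\iota_\T$ recalled just above. First I would write a general group element of $G$ uniquely as $n\xi^l$ with $n\in N$ and $l\in\Z$, and fix the lifting so that $\talpha_{n\xi^l}=\gamma_n\theta^l\otimes\gamma_{n\xi^l}$, together with the unitaries $u(n_1,n_2)=1$ and $u(\xi,n)=v_{\xi(n)}\otimes 1$ coming from the lemma. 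To evaluate $\omega$ I need the values of $u(g_1,g_2)$ for arbitrary $g_1=n_1\xi^{l_1}$, $g_2=n_2\xi^{l_2}$; these are forced by the chosen partial data plus the identity $\talpha_{g_1}\circ\talpha_{g_2}=\Ad u(g_1,g_2)\circ\talpha_{g_1g_2}$, so I would first derive a closed formula for $u(n_1\xi^{l_1},n_2\xi^{l_2})$ by pushing $\xi^{l_1}$ past $n_2$, i.e.\ writing $\xi^{l_1}n_2=\xi^{l_1}(n_2)\xi^{l_1}$, and expressing it as a product of the $v$'s (equivalently of $\mu$-twists of the $v$'s). The key algebraic input here is the relation $v_{n_1}\beta_{n_1}(v_{n_2})=\mu(n_1,n_2)v_{n_1n_2}$ together with $\beta_n=\gamma_{\xi^{-1}(n)}$, which turns iterated applications of $\theta$ and products of $v$'s into $\mu$-cocycle values evaluated on $\xi$-translates of the arguments.

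Second I would substitute these formulas into Eq.(\ref{obs}) and carefully collect the scalar; by construction the $A$-factor part is a genuine (untwisted) cocycle action since $\gamma$ is an honest action, so the only source of a nontrivial scalar is the first tensor factor, where the $v$'s live. The expected outcome, matching the formula for $\iota_M[\rho]$ recalled in the excerpt, is
$$\omega(n_1\xi^{l_1},n_2\xi^{l_2},n_3\xi^{l_3})=-\rho_{l_1}(\xi^{l_1}(n_2),\xi^{l_1+l_2}(n_3)),$$
where $\rho\in Z^1(\Z,H^2(N,\T))$ is the cocycle determined by $\rho_1=[\mu]$ and $\rho_{m+n}=\rho_m+\xi^n_*\rho_n$. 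The main point to check is that when the exponent $l_1$ is not $0$ or $1$, the accumulated scalar from pushing $\xi^{l_1}$ across the $N$-part is exactly $\rho_{l_1}$ evaluated on the appropriate $\xi$-translated arguments; this comes from the cocycle relation for $\rho$ (equivalently, from iterating $v_{n_1}\beta_{n_1}(v_{n_2})=\mu(n_1,n_2)v_{n_1n_2}$ and tracking how $\theta$-conjugation replaces $\mu$ by $\xi_*\mu$). Once the cocycle identity is verified on the nose, the statement $\ob(\alpha)=\iota_\T\circ Q_\T([\mu])$ is immediate from the description of $\iota_\T$.

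I expect the main obstacle to be the bookkeeping in the derivation of the general formula for $u(n_1\xi^{l_1},n_2\xi^{l_2})$ and the sign/translation conventions: one must be scrupulous about whether $\xi$ acts by $\xi(n)=\xi n\xi^{-1}$ or its inverse on the cocycle, about the direction of the $\theta$-conjugation $\theta\circ\beta_n\circ\theta^{-1}=\Ad v_n\circ\gamma_n$, and about the convention $e^{2\pi i r}$ identifying $\R/\Z$ with $\T$. A clean way to organize this is to reduce first to the case $l_1=1$ (a single $\xi$), where the scalar is literally $\mu(\xi(n_2),\xi(n_2n_3))$-type and matches $\rho_1$, and then obtain general $l_1$ by induction using $\rho_{l_1}=\rho_{l_1-1}+\xi^{l_1-1}_*\rho_1$, which mirrors the relation $u(\xi^{l_1},\,\cdot\,)$ built from $u(\xi^{l_1-1},\,\cdot\,)$ and $u(\xi,\,\cdot\,)$. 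If the conventions are set up consistently, no genuine analytic input is needed beyond what is already in the cited lemmas; the proof is purely a cocycle computation in $Z^3(G,\T)$ followed by comparison with the Lyndon/Hochschild--Serre formula.
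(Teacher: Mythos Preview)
Your plan is correct and follows essentially the same approach as the paper: compute $\omega$ directly from the explicit lifting $(\talpha,u)$ and match it against the Lyndon--Hochschild--Serre formula for $\iota_\T$. The paper streamlines the bookkeeping in two ways you may find helpful. First, it observes that with the natural choice $u(n_1\xi^{l_1},n_2\xi^{l_2})=\talpha_{n_1}(u(\xi^{l_1},n_2))$ one immediately gets $\omega(n_1\xi^{l_1},n_2\xi^{l_2},n_3\xi^{l_3})=\omega(\xi^{l_1},n_2\xi^{l_2},n_3)$, eliminating $n_1$ and $l_3$ at the outset. Second, rather than substituting the full $u$-formula into Eq.~(\ref{obs}), it applies the $3$-cocycle relation to $\omega(\xi^{l_1},n_2\xi^{l_2},n_3)$ to break it into the factors $\omega(\xi^{l_1},\xi^{l_2},n_3)$ and $\omega(\xi^{l_1},n_2,\xi^{l_2}(n_3))$, each of which is a short computation once one has established by induction on $l$ that $u(\xi^l,n_1)\talpha_{\xi^l(n_1)}(u(\xi^l,n_2))=\rho_l(\xi^l(n_1),\xi^l(n_2))\,u(\xi^l,n_1n_2)$. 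Your direct induction on $l_1$ will produce the same answer; the paper's route just front-loads the induction into this last identity and avoids writing out the general $u(g_1,g_2)$ explicitly.
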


\begin{proof} Throughout this computation, we denote $\talpha$ simply by $\alpha$. 
By construction in the proof of the previous lemma, we may assume  
\begin{equation}\label{un}
u(n_1\xi^{l_1},n_2\xi^{l_2})=\alpha_{n_1}(u(\xi^{l_1},n_2)).
\end{equation}
Let $w_n=v_n\otimes 1$, which satisfies 
$$\alpha_\xi\circ \alpha_{\xi^{-1}(n)}\circ \alpha_\xi^{-1}=\Ad w_n\circ \alpha_n,$$
$$w_{n_1}\alpha_{n_2}(w_{n_2})=\mu(n_1,n_2)w_{n_1n_2}.$$
We can put 
$$u(\xi^l,n)=\left\{
\begin{array}{ll}
\alpha_\xi^{l-1}(w_{\xi(n)})\alpha_\xi^{l-2}(w_{\xi^2(n)})\cdots w_{\xi^l(n)}, &\quad  l>0\\
\alpha_\xi^l(w_n^*)\alpha_\xi^{l+1}(w_{\xi^{-1}(n)}^*)\cdots \alpha_\xi^{-1}(w_{\xi^{l+1}(n)}^*) , &\quad  l<0
\end{array}
\right.
$$
Let $\rho\in Z^1(\Z,Z^2(N,\T))$ be a cocycle determined by $\rho_1=\mu$. 
Then by induction of $l$, we can show 
$$u(\xi^l,n_1)\alpha_{\xi^l(n_1)}(u(\xi^l,n_2))=\rho_l(\xi^l(n_1),\xi^l(n_2))u(\xi^l,n_1n_2).$$

Let $\omega\in Z^3(G,\T)$ be as in Eq.(\ref{obs}). 
Note that the condition Eq.(\ref{un}) implis   
$$\omega(n_1\xi^{l_1},n_2\xi^{l_2},n_3\xi^{l_3})=\omega(\xi^{l_1},n_2\xi^{l_2},n_3),$$
and the 3-cocycle relation implies 
\begin{align*}
\lefteqn{\omega(\xi^{l_1},n_2\xi^{l_2},n_3)} \\
 &=\omega(n_2,\xi^{l_2},n_3)^{-1}\omega(\xi^{l_1}n_2,\xi^{l_2},n_3)\omega(\xi^{l_1},n_2,\xi^{l_2}n_3)
 \omega(\xi^{l_1},n_2,\xi^{l_2})^{-1}\\
 &=\omega(\xi^{l_1},\xi^{l_2},n_3)\omega(\xi^{l_1},n_2,\xi^{l_2}(n_3)).
\end{align*}
The first term on the right-hand side is 
\begin{align*}
\lefteqn{\omega(\xi^{l_1},\xi^{l_2},n_3)}\\
&=\alpha_{\xi}^{l_1}(u(\xi^{l_2},n_3))u(\xi^{l_1},\xi^{l_2}(n_3)\xi^{l_2})
u(\xi^{l_1+l_2},n_3)^*u(\xi^{l_1},\xi^{l_2})^* \\ 
 &=\alpha_{\xi}^{l_1}(u(\xi^{l_2},n_3))u(\xi^{l_1},\xi^{l_2}(n_3))
u(\xi^{l_1+l_2},n_3)^* \\
 &=1.
\end{align*}
The second term is 
\begin{align*}
\lefteqn{\omega(\xi^{l_1},n_2,\xi^{l_2}(n_3))}\\
&=\alpha_\xi^{l_1}(u(n_2,\xi^{l_2}(n_3)))u(\xi^{l_1},n_2\xi^{l_2}(n_3))u(\xi^{l_1}n_2,\xi^{l_2}(n_3))^*u(\xi^{l_1},n_2)^*\\
 &= u(\xi^{l_1},n_2\xi^{l_2}(n_3))\alpha_{\xi^{l_1}(n_2)}(u(\xi^{l_1},\xi^{l_2}(n_3))^*)u(\xi^{l_1},n_2)^*\\
 &=\rho_{l_1}(\xi^{l_1}(n_2),\xi^{l_1+l_2}(n_3))^{-1},
\end{align*}
which shows the statement. 
\end{proof}

\begin{proof}[Proof of Theorem \ref{main}] 
It only remains to show the surjectivity of $\tob$. 
We keep using the decomposition $G=N\rtimes \langle \xi\rangle$ as before. 
Then we have the following commutative diagram with exact rows and the midle column: 
$$\begin{tikzcd}
&&H^3(G,K_0(A))\arrow[d,"{j_A}_*"]&&\\
0\arrow[r]&H^1(\Z,H^2(N,M))\arrow[r,"\iota_{M}"] \arrow[d,"{\ev_1}_*"]&H^3(G,M) \arrow[r,"\mathrm{res}"] \arrow[d,"{\ev_1}_*"]
& H^3(N,M)^\Z\arrow[r] \arrow[d,"{\ev_1}_*"]&0\\
0\arrow[r]&H^1(\Z,H^2(N,\T))\arrow[r,"\iota_\T"]&H^3(G,\T) \arrow[r,"\mathrm{res}"]& H^3(N,\T)^\Z\arrow[r]&0
\end{tikzcd}$$
where $M=K_0^\#(A)$. 

We first show $\Ima \iota_{K^\#_0(A)}\subset \Ima \tob$. 
Let $[\mu]\in H^2(N,K^\#_0(A))$. 
Then since $\partial_A{\ev_1}_*[\mu]=0$, the previous lemma shows that there exists a $G$-kernel $\alpha$ satisfying 
$$\ob(\alpha)=\iota_\T\circ Q_\T({\ev_1}_*([\mu]))={\ev_1}_*\circ\iota_{K^\#_0(A)}(Q_{K^\#_0(A)}([\mu])),$$
which shows $\iota_{K^\#_0(A)}(Q_{K^\#_0(A)}([\mu]))-\tob(\alpha)\in \ker {\ev_1}_*$.
Thus there exists an element $x\in H^3(G,K_0(A))$ satisfying $\iota_{K^\#_0(A)}(Q([\mu]))=\tob(\alpha)+{j_A}_*x$. 
Choosing $[(\beta,v)]\in \cE_A(G)$ with $\kappa^3((\beta,v))=x$, we get $\iota_{K^\#_0(A)}(Q([\mu]))=\tob(\alpha\otimes \beta)$. 

Now to prove the theorem, it suffices to show the following statement: for an arbitrary element $x\in H^3(N,K_0^\#(A))^\Z$, 
there exists a $G$-kernel $\alpha$ satisfying $\mathrm{res}(\tob(\alpha))=x$. 
Indeed, let $y$ be an arbitrary element in $H^3(G,K^\#_0(A))$. 
If we can take $\alpha$ as above for $x=\mathrm{res}(y)$, we get $y-\tob(\alpha)\in \Ima \iota_{K^\#_0(A)}\subset \Ima \tob$, 
and $y\in \Ima \tob$. 

We try to prove the statement by induction of the Hirsch length $h(G)$. 
It works in the case of $G=\Z^n$, while it stops at $h(G)=5$ in the general case. 
If $h(G)\leq 3$, we have $H^3(N,K^\#_0(A))=0$, and certainly the statement holds. 

We first treat the case $G=\Z^n$. 
Assume that $\tob$ is a surjection for $N=\Z^{n-1}$. 
Let $x\in H^3(N,K^\#_0(A))=H^3(N,K^\#_0(A))^\Z$. 
Then by induction hypothesis, there exists an $N$-kernel $\beta:N\to \Out(A)$ satisfying $\tob(\beta)=x$. 
Let $\alpha_{n\xi^l}=\beta_n\otimes \gamma _{n\xi^l}$ for $n\in N$ and $l\in \Z$. 
Then $\mathrm{res}(\tob(\alpha))=x$. 

Now we treat the general case. 
Assume that $\tob$ is surjection for $N$ and assume $h(N)<5$. 
Let $x\in H^3(N,K^\#_0(A))^\Z$. 
Then there exists a $N$-kernel $\beta:N\to \Out(A)$ with a lifting $(\tbeta,v)$ satisfying $\tob(\beta)=x$. 
Note that we also have $\tob(\beta_{\xi(\cdot)})=x$. 
Since $\dim BG\leq 4$, the Atiyah-Hirzebruch spectral sequence \cite[page 175]{DP-I} implies that 
$\delta_1:\bar{E}^1_A(BG)\to H^3(G,K_0(A))$ is an isomorphism, and $\ker\delta_1=\{0\}$. 
Thus Lemma \ref{kernel} implies $[\beta]=[\beta_{\xi\cdot(\cdot)}]$, and there exists $\theta\in \Aut(A)$ 
satisfying $[\theta\circ \tbeta_n\circ\theta^{-1}]=\beta_{\xi(n)}$. 
Letting $\alpha_{n\xi^l}=\beta_n\theta^\ell\otimes \gamma_{n\xi^l}$, we get a $G$-kernel $\alpha:G\to \Aut(A)$ with  
$\mathrm{res}(\tob(\alpha))=x$. 
\end{proof}

\begin{example}
The simplest non-trivial case is $G=\Z^3$. 
In this case, we have $H^3(\Z^3,M)=M$, and $\bar{E}^1_{\cO_\infty}(\T^3)\cong H^3(\T^3,\Z)=\Z$.  
Hence we have the following commutative diagram, 
$$
\begin{tikzcd}
\bar{E}_{\cO_\infty}^1(\T^3) \arrow[r,"f"] \arrow[d,"\simeq"]&H^3(\Z^3,K_0^\#(\cO_\infty)) \arrow[d,"\simeq"]\\
\Z\arrow[r,"j "]&\R
\end{tikzcd},
$$
which shows that the space $H^3(\Z^3,K_0^\#(\cO_\infty))$ of our new invariant naturally interpolates the set of 
Dixmier-Douady classes $H^3(\T^3,\Z)$. 
\end{example}
\section{Comments}
1. To establish Conjecture \ref{C4},(2) for the general poly-$\Z$ case, it is probably better to work on 
$\cO_\infty$ and $M_{\fP^\infty}$ separately instead of $M_{\fP^\infty}\otimes \cO_\infty$ in view of Corollary \ref{pi-stf}. 
In order to be able to push the induction argument in the $\cO_\infty$ case further, we need, as an induction hypothesis, 
a natural splitting of the exact sequence 
$$
\begin{tikzcd}
0\arrow[r]&\ker\delta_1\arrow[r,"f"]&\cF_{\cO_\infty}(N) \arrow[r,"\tob"]& H^3(N,\R)\arrow[r]&0
\end{tikzcd}.
$$
Such a statement is out of reach in our brute force method, and probably requires a homotopy theoretical interpretation of 
the grooup $\cF_{\cO_\infty}(G)$.  
Note that a similar exact sequence 
$$
\begin{tikzcd}
0\arrow[r]&\ker\delta_1\arrow[r]&\bar{E}_{\cO_\infty}^1(X) \arrow[r,"\delta_1"]& H^3(X,\Z)\arrow[r]&0
\end{tikzcd}
$$
splits because $H^3(X,\Z)$ is identified with $\bar{E}_\C^1(X)$. 

2. To show Conjecture \ref{C4},(1) in full generality, we might need a $G$-kernel version of 
Gabe-Sz\'abo's classification theorem. 
A recent work of Arano-Kitamura-Kubota's \cite{AKK} may be the first step toward it. 

3. For non-amenable exact groups $G$ satisfying the Haagerup property, it may still be possible to get reasonable theories 
for $\cE_A(G)$ and $\cF_A(G)$ by considering only amenable actions. 
As there exists a characterization of amenability of group actions in terms of central sequences \cite{OS21}, this definition  
extends to $G$-kernels.

4. To define $\cE_A(G)$ and $\cF_A(G)$ in the stably finite case, we need to assume strongly outerness 
for cocycle actions and $G$-kernels. 
Even with this modification, we cannot expect any similar results in the stably finite case as we have already seen in 
Remark \ref{sf}. 
In fact, Matui-Sato \cite{MS14} showed $\cE_A(\Z^2)\cong H^2(\Z^2,\R/K_0(A))$ for $A=M_{\fP^\infty}$ and $A=\cZ$.



\end{document}